\newtheorem*{theoA}{Theorem A}
\newtheorem*{theoB}{Theorem B}
\newtheorem*{theoC}{Theorem C}
\newtheorem*{theoD}{Theorem D}
\newtheorem*{theoE}{Theorem E}
\newtheorem*{theoF}{Theorem F}
\newtheorem*{theoG}{Theorem G}
\newtheorem*{theoH}{Theorem H}
\newtheorem*{theoI}{Theorem I}
\newtheorem{theo}{Theorem}[section]
\newtheorem{lem}{Lemma}[section]
\newtheorem{rem}{Remark}[section]
\newtheorem{que}{Question}[section]
\newtheorem{open problem}{Open problem}[section]
\newcommand{\pa}{\partial}
\newcommand{\ol}{\overline}
\newcommand{\be}{\begin{equation}}
\newcommand{\ee}{\end{equation}}
\newcommand{\bs}{\begin{small}}
\newcommand{\es}{\end{small}}
\newcommand{\beas}{\begin{eqnarray*}}
\newcommand{\eeas}{\end{eqnarray*}}
\newcommand{\bea}{\begin{eqnarray}}
\newcommand{\eea}{\end{eqnarray}}
\renewcommand{\epsilon}{\varepsilon}
\numberwithin{equation}{section}
\begin{document}
\title[The Bohr's phenomenon]{The Bohr's phenomenon for certain K-quasiconformal harmonic mappings}
\author[V. Allu, R. Biswas and R. Mandal]{Vasudevarao Allu, Raju Biswas and Rajib Mandal}
\date{}
\address{Vasudevarao Allu, Department of Mathematics, School of Basic Science, Indian Institute of Technology Bhubaneswar, Bhubaneswar-752050, Odisha, India.}
\email{avrao@iitbbs.ac.in}
\address{Raju Biswas, Department of Mathematics, Raiganj University, Raiganj, West Bengal-733134, India.}
\email{rajubiswasjanu02@gmail.com}
\address{Rajib Mandal, Department of Mathematics, Raiganj University, Raiganj, West Bengal-733134, India.}
\email{rajibmathresearch@gmail.com}
\maketitle

\let\thefootnote\relax
\footnotetext{2020 Mathematics Subject Classification: 30A10, 30B10, 30C45, 30C62, 30C80.}
\footnotetext{Key words and phrases: Harmonic mappings, Bohr radius, Improved Bohr radius, Bohr-Rogosinski inequality, $K$-quasiconformal mappings, Concave Univalent functions, Subordination.}
\begin{abstract}
The primary objective of this paper is to establish several sharp versions of improved Bohr inequalities, refined Bohr inequalities, and Bohr-Rogosinski inequalities for the class of 
$K$-quasiconformal sense-preserving harmonic mappings $f=h+\ol{g}$ in the unit disk $\Bbb{D}:=\{z\in\mathbb{C}: |z|<1\}$, where $f$ and $g$ are analytic functions in $\mathbb{D}$.
\end{abstract}
\section{\bf Introduction and Preliminaries}
Let $f$ be an analytic function on the open unit disk $\mathbb{D}:=\{z\in\mathbb{C}:|z|<1\}$ with the Taylor series expansion  
\bea\label{e1} f(z)=\sum_{n=0}^{\infty} a_nz^n\eea 
such that $|f(z)|\leq 1$ in $\Bbb{D}$.
Then, for the majorant series $M_f(r):=\sum_{n=0}^{\infty}|a_n|r^n$ of $f$, we have
\bea\label{e2} M_f(r)\leq 1\quad\text{for}\quad|z|=r\leq 1/3.\eea
The radius $1/3$ can't be improved. It is observed that if a complex-valued function $f:\mathbb{D} \to \mathbb{C}$ satisfies the inequality $|f(z)|\le 1$ for all $z\in\mathbb{D}$, 
and if there exists a point $y\in\mathbb{D}$ such that $|f(y)|=1$, then $f(z)$ reduces to a unimodular constant function.
Here $1/3$ is known as Bohr radius while the inequality (\ref{e2}) is known as classical Bohr inequality. Actually, H. Bohr \cite{B1914} obtained the inequality (\ref{e2}) for $r\leq 1/6$ 
but later Weiner, Riesz and Schur \cite{D1995} independently improved it to $1/3$.\\[2mm] 
\indent In recent years, there has been a substantial body of research devoted to the classical Bohr inequality and its generalized forms.
The notion of Bohr radius has been discussed by Abu-Muhanna and Ali \cite{A2010,AA2011} in the context of analytic functions from $\Bbb{D}$ to simply connected domains, as well as to 
the exterior of the closed unit disk in $\Bbb{C}$. Furthermore, the Bohr phenomenon for shifted disks and simply connected domains is explored in depth in \cite{AAH2022,EPR2021,FR2010}. In their 
respective studies, Allu and Halder \cite{AH2021} and Bhowmik and Das \cite{BD2018} have examined the Bohr phenomenon in the context of subordination classes.
Boas and Khavinson \cite{BK1997} have studied the notion of the Bohr radius to encompass the case of several complex variables and identified the multidimensional Bohr 
radius as a key contribution to this field of research. Many researchers have built upon this foundation, extending and generalizing this phenomenon in different contexts 
(see \cite{A2000,AAD2001,LP2021}). We refer to \cite{AKP2019,AAH2023,1AH2022,2AH2022,BK2004,EPR2019,HLP2020,IKP2020,KKP2021,KP2018,LP2023,LPW2020,MBG2024,BM2024} and the references 
listed therein for an in-depth investigation on several other aspects of Bohr's inequality.\\[2mm]
\indent In addition to the notion of the Bohr radius, there is another concept, known as the Rogosinski radius \cite{R1923}, which is defined as follows: Let $f(z)=\sum_{n=0}^{\infty}a_nz^n$ be analytic in $\mathbb{D}$ such that $|f(z)|<1$ in $\Bbb{D}$. Then, for every $N\geq 1$, we have 
$\left|S_N(z)\right|<1$ in the disk $|z|<1/2$, where $S_N(z):=\sum_{n=0}^{N-1}a_nz^n$ denotes partial sum of $f$ and 
the radius $1/2$ is sharp. Motivated by Rogosinski radius, Kayumov and Ponnusamy \cite{KP2017} have considered the Bohr-Rogosinski sum $R_N^f(z)$ which is defined by
\beas R_N^f(z):=|f(z)|+\sum_{n=N}^{\infty}|a_n||z|^n.\eeas
It is easy to see that $|S_N(z)|=\left|f(z)-\sum_{n=N}^{\infty}a_nz^n\right|\leq R_N^f(z)$. Moreover, the Bohr-Rogosinski sum
$R_N^f(z)$ is related to the classical Bohr sum (Majorant series) in which $N=1$ and $f(0)$ is replaced by $f(z)$.
For an analytic function $f$ in $\Bbb{D}$ with $|f(z)|<1$ in $\Bbb{D}$, Kayumov and Ponnusamy \cite{KP2017} defined the Bohr-Rogosinski radius as the largest number $r\in(0, 1)$
such that $R_N^f(z)\leq 1$ for $|z|< r$ and established the following result.
\begin{theoA}\cite{KP2017}
Let $f(z)=\sum_{n=0}^{\infty} a_nz^n$ be analytic in $\mathbb{D}$ and $|f(z)|\leq 1$. Then, for each $N\in\mathbb{N}$, we have 
\bea\label{f5} |f(z)|+\sum_{n=N}^{\infty}|a_n||z|^n\leq 1\eea
for $|z|=r\leq R_N$, where $R_N$ is the positive root of the equation $2(1+r)r^N-(1-r)^2=0$. The radius $R_N$ is the best possible. 
\end{theoA}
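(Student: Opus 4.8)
The plan is to reduce the statement to two classical facts about an analytic self-map $f=\sum_{n\ge 0}a_nz^n$ of $\mathbb D$ and then to a single one-variable inequality. If $|a_0|=1$, then (as recalled in the Introduction) $f$ is a unimodular constant, so $R_N^f(z)=|f(z)|=1$ and there is nothing to prove; hence I assume $a:=|a_0|<1$. First I would apply the Schwarz--Pick lemma to $(f(z)-f(0))/(1-\overline{f(0)}f(z))$, whose pseudohyperbolic norm is at most $|z|=r$; identifying the Euclidean disc in which this forces $f(z)$ to lie yields the sharp bound $|f(z)|\le (a+r)/(1+ar)$ on $|z|=r$. Second, I would use the coefficient estimate $|a_n|\le 1-a^2$ for every $n\ge 1$, valid for every analytic self-map of $\mathbb D$.

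Granting these, for $|z|=r<1$ one obtains
\[
R_N^f(z)=|f(z)|+\sum_{n=N}^{\infty}|a_n|r^n\le \frac{a+r}{1+ar}+(1-a^2)\sum_{n=N}^{\infty}r^n=\frac{a+r}{1+ar}+\frac{(1-a^2)r^N}{1-r}.
\]
Since $1-\dfrac{a+r}{1+ar}=\dfrac{(1-a)(1-r)}{1+ar}$, the inequality $R_N^f(z)\le 1$ amounts to $\dfrac{(1-a^2)r^N}{1-r}\le\dfrac{(1-a)(1-r)}{1+ar}$, i.e., after dividing by $1-a>0$, to $(1+a)(1+ar)\,r^N\le (1-r)^2$. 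The left-hand side is increasing in $a\in[0,1]$, so it suffices to verify it at $a=1$, namely $\phi(r):=2(1+r)r^N-(1-r)^2\le 0$. I would then observe that $\phi(0)=-1<0$, $\phi(1)=4>0$, and $\phi'(r)=2r^N+2N(1+r)r^{N-1}+2(1-r)>0$ on $(0,1)$, so $\phi$ has a unique zero $R_N\in(0,1)$ and $\phi(r)\le 0$ precisely for $r\in[0,R_N]$; this gives exactly \eqref{f5} for $|z|=r\le R_N$.

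For the sharpness I would test the disc automorphisms $f_a(z)=\dfrac{a-z}{1-az}$, $a\in(0,1)$, for which $a_0=a$ and $a_n=-(1-a^2)a^{n-1}$ $(n\ge 1)$. Evaluating at $z=-r$ gives $|f_a(-r)|=(a+r)/(1+ar)$ and $\sum_{n\ge N}|a_n|r^n=(1-a^2)a^{N-1}r^N/(1-ar)$, hence
\[
R_N^{f_a}(-r)-1=(1-a)\left[\frac{(1+a)a^{N-1}r^N}{1-ar}-\frac{1-r}{1+ar}\right],
\]
and as $a\to 1^-$ the bracket tends to $\dfrac{2r^N}{1-r}-\dfrac{1-r}{1+r}=\dfrac{\phi(r)}{1-r^2}$ (here the factor $a^{N-1}\to 1$ is what makes the two coefficient bounds asymptotically tight simultaneously). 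Thus if $r>R_N$ then $\phi(r)>0$, so $R_N^{f_a}(-r)>1$ for $a$ sufficiently close to $1$, and $R_N$ cannot be enlarged. The one step I would single out as requiring care is the coefficient bound $|a_n|\le 1-a^2$: a direct Parseval argument only delivers the weaker $|a_n|\le\sqrt{1-a^2}$, which is useless as $a\to1$. To obtain the correct linear-in-$(1-a)$ decay I would average $f$ over the $n$-th roots of unity, writing $\frac1n\sum_{j=0}^{n-1}f(e^{2\pi ij/n}z)=\Phi(z^n)$ with $\Phi$ an analytic self-map of $\mathbb D$ satisfying $\Phi(0)=a_0$ and $\Phi'(0)=a_n$, and then invoke the Schwarz--Pick derivative inequality $|\Phi'(0)|\le 1-|\Phi(0)|^2$. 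With that estimate in hand, the remaining steps are routine one-variable analysis.
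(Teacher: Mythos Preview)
Your argument is correct. The paper does not supply a proof of Theorem~A; it merely quotes the result from \cite{KP2017}, so there is no in-paper proof to compare against. Your approach---Schwarz--Pick for the growth bound $|f(z)|\le (|a_0|+r)/(1+|a_0|r)$, the Wiener-type coefficient inequality $|a_n|\le 1-|a_0|^2$ obtained via root-of-unity averaging and Schwarz--Pick at the origin, then maximizing over $|a_0|\in[0,1]$ and a monotonicity check for $\phi$---is exactly the line taken in the original Kayumov--Ponnusamy paper, and your sharpness computation with $f_a(z)=(a-z)/(1-az)$ at $z=-r$ and $a\to 1^-$ is the standard one. The derivation of $|a_n|\le 1-|a_0|^2$ that you sketch is clean and fully justifies the step you flagged as delicate.
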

\noindent Note that the following is another way of writing the right side of the inequality (\ref{f5}): 
\beas 1=|f(0)|+1-|f(0)|=|f(0)|+d\left(f(0), \pa \Bbb{D}\right),\eeas where $d\left(f(0),\pa \mathbb{D}\right)$ denotes the Euclidean 
distance between $f(0)$ and the boundary $\pa \mathbb{D}$ of $\mathbb{D}$.\\[2mm]
\indent 
In 2020, Ponnusamy {\it et al.} \cite{PVW2020} obtained the following refined Bohr inequality by employing a refined version of the coefficient inequalities.
\begin{theoB}\cite{PVW2020}  Let $f(z)=\sum_{n=0}^{\infty} a_nz^n$ be analytic in $\mathbb{D}$ and $|f(z)|\leq 1$. Then
\bea\label{f3}\sum_{n=0}^\infty |a_n|\rho^n+\left(\frac{1}{1+|a_0|}+\frac{\rho}{1-\rho}\right)\sum_{n=1}^\infty |a_n|^2 \rho^{2n}\leq 1\eea
for $r\leq 1/(2+|a_0|)$ and the numbers $1/(1+|a_0|)$ and $1/(2+|a_0|)$ cannot be improved.\end{theoB}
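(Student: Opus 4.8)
The plan is to reduce the statement to a single sharp coefficient inequality for bounded analytic functions, derive the theorem from it by elementary algebra, and establish sharpness against an explicit M\"obius map. Throughout I identify $\rho$ with $r=|z|$ and set $a:=|a_0|\in[0,1)$; replacing $f(z)$ by $\lambda f(\mu z)$ for suitable unimodular constants $\lambda,\mu$, I may assume $a_0=a\ge 0$. Put
\[
\Phi(r):=a+\sum_{n=1}^{\infty}|a_n|r^n+\left(\frac{1}{1+a}+\frac{r}{1-r}\right)\sum_{n=1}^{\infty}|a_n|^2r^{2n}.
\]
The target is $\Phi(r)\le 1$ for $r\le\frac1{2+a}$.

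The engine is the refined coefficient inequality
\[
\sum_{n=1}^{\infty}|a_n|r^n+\left(\frac{1}{1+a}+\frac{r}{1-r}\right)\sum_{n=1}^{\infty}|a_n|^2r^{2n}\ \le\ (1-a^2)\,\frac{r}{1-r},\qquad 0\le r<1,
\]
valid whenever $|f|\le1$. Granting it, the theorem is immediate: $\Phi(r)\le a+(1-a^2)\frac{r}{1-r}$, and $a+(1-a^2)\frac{r}{1-r}\le1$ is, after dividing by $1-a$ and clearing denominators, the same as $(1+a)r\le 1-r$, i.e. $(2+a)r\le1$, i.e. $r\le\frac1{2+a}$. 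So the real work is the refined inequality, and \emph{this is the step I expect to be the main obstacle}. I would attack it by the Schur reduction: $\varphi:=\dfrac{f-a_0}{1-\overline{a_0}f}$ has $\varphi(0)=0$ and $|\varphi|\le1$, hence $\varphi(z)=z\,\omega(z)$ with $|\omega|\le1$ (Schwarz lemma), and $f=\dfrac{a_0+z\omega}{1+\overline{a_0}z\omega}$; expanding gives each $a_n$ as a polynomial in $a_0$ and the Taylor coefficients of $\omega$. Into this one feeds the classical estimate $|a_n|\le1-a^2$ for $n\ge1$, the $L^2$ estimate $\sum_{n\ge1}|a_n|^2\le1-a^2$ (from $\frac1{2\pi}\int_0^{2\pi}|f(re^{i\theta})|^2\,d\theta\le1$ and $r\uparrow1$), and the sharpened coefficient bounds of Schur type, beginning with $|a_2|\le1-a^2-\dfrac{|a_1|^2}{1+a}$. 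After bounding the tails $\sum_{n\ge2}$ this collapses the inequality to an elementary one in essentially the single parameter $|a_1|\in[0,1-a^2]$ (with $a$ and $r\le\frac1{2+a}$ fixed), which is settled by checking endpoints and critical points. The delicate point is that none of the classical estimates suffices on its own — already $\sum_{n\ge1}|a_n|r^n\le(1-a^2)\frac r{1-r}$ exhausts the entire budget $1$ at $r=\frac1{2+a}$ — so one must retain the correlation that makes the tail coefficients small precisely when $|a_1|$ is near its maximum; exactly this correlation pays for the $\frac r{1-r}$-weighted quadratic term.

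For sharpness I would use the M\"obius function $f_a(z)=\dfrac{a-z}{1-az}$, which satisfies $|f_a|\le1$, $f_a(0)=a$, with coefficients $a_0=a$ and $|a_n|=(1-a^2)a^{n-1}$ for $n\ge1$. Summing the resulting geometric series,
\[
\sum_{n=0}^{\infty}|a_n|r^n=a+\frac{(1-a^2)r}{1-ar},\qquad \sum_{n=1}^{\infty}|a_n|^2r^{2n}=\frac{(1-a^2)^2r^2}{1-a^2r^2},
\]
and, using $\dfrac1{1+a}+\dfrac r{1-r}=\dfrac{1+ar}{(1+a)(1-r)}$, a short computation yields $\Phi\!\left(\tfrac1{2+a}\right)=1$ exactly (indeed $f_a$ turns the refined inequality above into an equality for every $r$). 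Hence the radius $\frac1{2+|a_0|}$ cannot be enlarged, and the constant $\frac1{1+|a_0|}$ cannot be increased either, since replacing either of them by anything strictly larger makes $\Phi\!\left(\tfrac1{2+a}\right)>1$ for this $f_a$; letting $a\to0$ recovers the classical extremal $f(z)=-z$ and the Bohr radius $1/3$.
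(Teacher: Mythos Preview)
The paper under review does not prove Theorem~B at all; it is quoted verbatim from \cite{PVW2020} as background, so there is no in-paper argument to compare against. Your overall architecture --- reduce everything to the single inequality
\[
\sum_{n\ge1}|a_n|r^n+\Bigl(\frac{1}{1+a}+\frac{r}{1-r}\Bigr)\sum_{n\ge1}|a_n|^2r^{2n}\ \le\ (1-a^2)\,\frac{r}{1-r}\qquad(0\le r<1),
\]
deduce the theorem from it by the trivial algebra $a+(1-a^2)\frac{r}{1-r}\le1\Leftrightarrow r\le\frac{1}{2+a}$, and verify sharpness on $f_a(z)=(a-z)/(1-az)$ --- is exactly how \cite{PVW2020} is organised, and your sharpness computation (including the observation that $f_a$ turns the displayed inequality into an identity for every $r$) is correct.

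The genuine gap is the proof of the displayed inequality itself. You flag it as ``the main obstacle'' and then offer only a sketch: perform one Schwarz--Pick step $f=(a+z\omega)/(1+\bar a z\omega)$, feed in the first two Schur-type bounds $|a_n|\le1-a^2$ and $|a_2|\le1-a^2-\frac{|a_1|^2}{1+a}$ together with the $\ell^2$ bound, and assert that ``bounding the tails'' reduces the problem to a one-parameter check in $|a_1|$. This last step does not go through as written. The crude tail bound $\sum_{n\ge2}|a_n|r^n\le(1-a^2)\frac{r^2}{1-r}$ already exhausts the entire right-hand side once combined with the $n=1$ term, so nothing is left to pay for the quadratic sum --- you acknowledge this yourself. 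But the only ``correlation'' device you invoke is the single inequality for $|a_2|$; that controls one coefficient, not the whole tail, and there is no mechanism in your outline that transfers the saving to $a_3,a_4,\dots$. The proof in \cite{PVW2020} closes this by iterating the Schwarz--Pick reduction: applying it $N$ times produces, for every $N\ge1$, a refined bound on $\sum_{n\ge N}|a_n|r^n$ that already carries a compensating $\frac{r}{1-r}$-weighted quadratic remainder, and the displayed inequality then follows by telescoping. Without that inductive scheme (or an equivalent substitute), the key inequality --- and hence the theorem --- remains unproved.
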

Let $h$ be an analytic function in $\mathbb{D}$ and $\mathbb{D}_r=:\{z\in\mathbb{C}: |z|<r \}$ for $0<r<1$. Let $S_r(h)$ denote the planar integral  
\beas S_r(h)=\int_{\mathbb{D}_r} |h'(z)|^2 dA(z).\eeas
If $h(z)=\sum_{n=0}^\infty a_n z^n$, then it is well-known that $S_r(h)/\pi=\sum_{n=1}^\infty n|a_n|^2 r^{2n}$ and if $h$ is univalent, then $S_r(h)$ is the area of the image $h(\mathbb{D}_r)$.
\subsection{\bf The Bohr phenomenon for the class of subordinations}
Let $\mathcal{B}$ denote the class of all analytic functions $f(z)=\sum_{n=0}^\infty a_n z^n$ that are defined on the unit disk $\mathbb{D}$, with 
$|f(z)|\leq 1$ in $\mathbb{D}$.
For any 
 analytic functions $f$ and $g$ defined on $\mathbb{D}$, we say that the function $f$ is subordinate to $g$, denoted by $f\prec g$, if there exists an $\omega\in\mathcal{B}$ with $\omega(0)=0$ and $f(z)=g\left(\omega(z)\right)$ for $z\in\mathbb{D}$. It is well-known that, if $g$ is univalent in $\mathbb{D}$, then $f\prec g$ if, and only if, $f(0)=g(0)$ and $f(\mathbb{D})\subset g(\mathbb{D})$. Also, it is clear that, if $f\prec g$, then $|f'(0)|\leq |g'(0)|$. For basic details and results on subordination classes, we refer to \cite[Chapter 6]{D1983}.\\[2mm] 
 \indent Let  $\mathcal{A}$ denote the class of all analytic functions on $g$ on $\Bbb{D}$ satisfying the normalization $g(0)=g'(0)-1=0$. Let $\mathcal{S}$ denote the class of univalnet ({\it i.e.}, one-to-one) functions on $\Bbb{D}$. Let $\mathcal{S}^*$ and $\mathcal{C}$ denote the subclass of $\mathcal{S}$ of mappings that map 
 $\mathbb{D}$ onto starlike and convex domains, respectively. For additional information regarding these classes and several other related subclasses of $\mathcal{S}$, we refer to \cite{D1983}.
If $g\in\mathcal{S}$, then $1/4\leq d\left(0,\pa g(\mathbb{D})\right)\leq 1$ and if $g\in\mathcal{C}$, then $1/2\leq d\left(0,\pa g(\mathbb{D})\right)\leq 1$ (see \cite[Theorem 2.3 and Theorem 2.15]{D1983}).\\[2mm]
\indent The concept of the Bohr phenomenon, as it applies to the family of functions defined by subordination, was first introduced by Abu-Muhanna \cite{A2010}. 
In this paper, we will use the notation $S(g)$ to denote the class of all analytic functions $f$ in $\Bbb{D}$ that are subordinate to a fixed univalent function $g$, {\it i.e.,} $S(g):=\{f: f\prec g\}$. 
The family $S(g)$ is said to exhibit a Bohr phenomenon if there exists an $r_g\in(0, 1]$ such that, for any $f(z)=\sum_{n=0}^\infty a_n z^n\in S(g)$, the inequality
\bea\label{f1}\sum_{n=1}^\infty |a_n|r^n\leq d\left(g(0),\pa g(\mathbb{D})\right)\quad\text{holds for}\quad |z|=r\leq r_g.\eea
The largest $r_g$ is called the Bohr radius. 
Similarly, the family $S(g)$ is said to exhibit a Bohr-Rogosinski phenomenon if there exists an $r_{N,g}\in(0,1]$ such that, for any $f(z)=\sum_{n=0}^\infty a_n z^n\in S(g)$, then 
\bea\label{f2} |f(z)|+\sum_{n=N}^\infty |a_n|r^n\leq |g(0)|+d\left(g(0),\pa g(\mathbb{D})\right)\quad\text{holds for}\quad |z|=r\leq r_{N,g}.\eea
The largest $r_{N,g}$ is called the Bohr-Rogosinski radius.\\[2mm]
\indent It is evident that $g(z)=(a-z)/(1-\ol{a}z)$ with $|a|<1$, then $g(\mathbb{D})=\mathbb{D}$, $S(g)=\mathcal{B}$ and $d\left(g(0),\pa g(\mathbb{D})\right)=1-|g(0)|=1-|a|$. This implies that (\ref{f1}) holds for $|z|<1/3$ by (\ref{e2}) and (\ref{f2}) holds with $r_{N,g}=R_N$, according to \textrm{Theorem A}.\\[2mm]
In \cite{A2010}, Abu-Muhanna have established the following Bohr phenomenon for the subordination class $S(g)$, where $g$ is univalent in $\mathbb{D}$.
\begin{theoC}\cite[Theorem 1, P. 1072]{A2010}
If $f(z)=\sum_{n=0}^\infty a_n z^n\in S(g)$ and $g(z)=\sum_{n=0}^\infty b_n z^n$ is univalent, then 
\beas \sum_{n=1}^\infty |a_n|r^n\leq d\left(g(0), \pa g(\Bbb{D})\right)\quad\text{for}\quad |z|=r\leq r_0=3-2\sqrt{2}\eeas
and this radius is sharp for the Koebe function $f(z)=z/(1-z)^2$. 
\end{theoC}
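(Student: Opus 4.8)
The plan is to reduce to the normalized situation and then combine the Koebe one-quarter theorem with the sharp coefficient bound for functions subordinate to a univalent map. Since $g$ is univalent, $b_1=g'(0)\neq 0$, so $\phi(z):=(g(z)-b_0)/b_1$ belongs to $\mathcal{S}$. From $f\prec g$ we obtain $\omega\in\mathcal{B}$ with $\omega(0)=0$ and $f=g\circ\omega$; since $\phi(0)=0$ this gives $(f(z)-a_0)/b_1=\phi(\omega(z))=:\psi(z)$, and $\psi\prec\phi$ with $\psi(0)=0$. Writing $\psi(z)=\sum_{n\geq 1}c_nz^n$, we have $a_n=b_1c_n$ for every $n\geq 1$. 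The two external facts I would invoke are: (i) the Koebe one-quarter theorem, quoted in the text as $d(0,\partial\phi(\mathbb{D}))\geq 1/4$ for $\phi\in\mathcal{S}$; and (ii) the resolution of the Rogosinski conjecture (a consequence of de Branges' theorem): if $\psi\prec\phi$ with $\phi\in\mathcal{S}$, then $|c_n|\leq n$ for all $n\geq 1$.

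With these in hand the estimate is short. For $|z|=r<1$,
\[
\sum_{n=1}^{\infty}|a_n|r^n=|b_1|\sum_{n=1}^{\infty}|c_n|r^n\leq |b_1|\sum_{n=1}^{\infty}nr^n=|b_1|\,\frac{r}{(1-r)^2}.
\]
Since $g(\mathbb{D})=b_0+b_1\phi(\mathbb{D})$ and $g(0)=b_0$, we have $d(g(0),\partial g(\mathbb{D}))=|b_1|\,d(0,\partial\phi(\mathbb{D}))\geq |b_1|/4$, i.e. $|b_1|\leq 4\,d(g(0),\partial g(\mathbb{D}))$. Hence
\[
\sum_{n=1}^{\infty}|a_n|r^n\leq 4\,d(g(0),\partial g(\mathbb{D}))\,\frac{r}{(1-r)^2},
\]
and the right-hand side is at most $d(g(0),\partial g(\mathbb{D}))$ precisely when $4r\leq (1-r)^2$, that is, when $r^2-6r+1\geq 0$. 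The relevant root is $r_0=3-2\sqrt{2}\in(0,1)$, so the asserted inequality holds for all $r\leq r_0$.

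For sharpness I would test the Koebe function itself: take $g(z)=f(z)=z/(1-z)^2=\sum_{n\geq 1}nz^n\in\mathcal{S}$, so $\omega$ is the identity and $f\in S(g)$. Then $g(\mathbb{D})=\mathbb{C}\setminus(-\infty,-1/4]$, whence $d(g(0),\partial g(\mathbb{D}))=1/4$, while $\sum_{n\geq 1}|a_n|r^n=\sum_{n\geq 1}nr^n=r/(1-r)^2$; at $r=r_0$ one has $r_0/(1-r_0)^2=1/4$ (from $4r_0=(1-r_0)^2$), and for any $r>r_0$ the sum strictly exceeds $1/4$, so $r_0=3-2\sqrt{2}$ cannot be enlarged. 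The only deep ingredient in the whole argument is the bound $|c_n|\leq n$ for subordinate functions; once that is granted, the remainder is a geometric series, the Koebe one-quarter theorem, and a quadratic inequality. The one point requiring care is that de Branges' theorem is applied only after the passage to $\phi\in\mathcal{S}$, since $g$ (and $f$) may well be unbounded, as the extremal example shows.
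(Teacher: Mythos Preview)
Your proof is correct. The paper does not actually supply its own proof of Theorem~C --- it is quoted as a known result from \cite{A2010} --- but your argument is exactly the one encoded in the paper's Lemma~\ref{Qlem1} (also from \cite{A2010}), which packages the two ingredients you use: the Rogosinski/de~Branges bound $|a_n|\leq n|g'(0)|$ for $f\prec g$ with $g$ univalent, and the Koebe estimate $|g'(0)|\leq 4\,d(g(0),\partial g(\mathbb{D}))$. From there the geometric-series computation and the quadratic $r^2-6r+1\geq 0$ are identical to what one obtains by applying Lemma~\ref{Qlem1} directly, and your sharpness verification with the Koebe function is the standard one.
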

It has been shown that $r_0$ can be improved to $1/3$, which is sharp if $g$ is a convex and univalent function, {\it i.e.,} $g(\Bbb{D})$ is a convex domain.
It can be observed that whenever a function $f$ maps $\Bbb{D}$ into a domain $g(\Bbb{D})$ that is distinct from $\Bbb{D}$, the Bohr inequality (\ref{f1}) can be established in a 
general sense if $f\in S(g)$, where $g$ is the covering map from $\Bbb{D}$ onto $g(\Bbb{D})$ that satisfies $f(0)=g(0)$. The implementation of this concept has yielded a number of results (see \cite{A2010,AA2011,AANH2014}).
It is therefore of significant interest to study the Bohr phenomenon for $S(g)$, where $g$ belongs to a specific class of univalent functions.\\[2mm]
\indent 
The refined version of Bohr's inequality (\ref{f3}) can be expressed as follows:
\beas\sum_{n=1}^\infty |a_n|r^n+\left(\frac{1}{2-(1-|f(0)|)}+\frac{r}{1-r}\right)\sum_{n=1}^\infty |a_n|^2 r^{2n}\leq 1-|a_0|=1-|f(0)|.\eeas
It is evident that the quantity $1-|f(0)|$ represents the distance between $f(0)$ and the boundary $\pa\mathbb{D}$ of $\mathbb{D}$. Recently, Ponnusamy {\it et al.}
\cite{PVW2022,PVW2020} have derived several refined versions of Bohr's inequality for bounded analytic functions. The family $S(g)$ is said to exhibit a refined Bohr phenomenon if there exists an $r_g\in(0,1]$ such that, for any $f(z)=\sum_{n=0}^\infty a_n z^n\in S(g)$, the inequality
\beas\sum_{n=1}^\infty |a_n|r^n+\left(\frac{1}{2-\lambda}+\frac{r}{1-r}\right)\sum_{n=1}^\infty |a_n|^2 r^{2n}\leq \lambda\quad\text{holds for}\quad |z|=r\leq r_g,\eeas
where $\lambda=d\left(g(0),\pa g(\mathbb{D})\right)\leq 1$. The largest number $r_g$ is called the Bohr radius in the refined formulation.\\[2mm]
\indent 
Let us consider the complex-valued function $f=u+iv$ defined in a simply connected domain $\Omega$. If $f$ satisfies the Laplace equation $\Delta f =4f_{z\ol z} = 0$, then $f$ 
is said to be harmonic in $\Omega$, {\it i.e.}, $u$ and $v$ are real harmonic in $\Omega$. 
Note that every harmonic mapping $f$ has the canonical representation $f = h + \ol{g}$, where $h$ and $g$ are analytic in $\Omega$, known as the analytic and co-analytic parts 
of $f$, respectively, and $\ol{g(z)}$ denotes the complex conjugate of $g(z)$. This representation is unique up to an additive constant (see \cite{D2004}). The inverse function theorem 
and a result of Lewy \cite{L1936} demonstrate that a harmonic function $f$ is locally univalent in $\Omega$ if, and only if, the Jacobian  of $f$, defined by $J_f(z):=|h'(z)|^2-|g'(z)|^2$ 
is non-zero in $\Omega$. A harmonic mapping $f$ is locally univalent and sense-preserving
in $\Omega$ if, and only if, $J_f (z) > 0$ in $\Omega$ or equivalently if $h'\not=0$ in $\Omega$
and the dilatation $\omega_f:= \omega=g'/h'$ of $f$ has the property that $|\omega_f| < 1$ in $\Omega$ (see \cite{L1936}).\\[2mm]
\indent 
A locally univalent and sense-preserving harmonic mapping $f = h+\ol{g}$ is said to be $K$-quasiconformal harmonic on $\mathbb{D}$ if the condition $|\omega_f(z)| \leq k < 1$ is satisfied for $z\in\mathbb{D}$, where $K = (1+k)/(1-k) \geq 1$ (see \cite{K2008,M1968}). Clearly, $ k \to 1$ corresponds to the limiting case $K \to\infty$.\\[2mm] 
\indent 
In 2019, Liu and Ponnusamy \cite{LP2019} established the following results for determining the Bohr radii for harmonic mappings defined in the unit disk $\mathbb{D}$ whose analytic part is subordinate to some analytic function.
\begin{theoD}\cite{LP2019} Suppose that $f(z)=h(z)+\ol{g(z)}=\sum_{n=0}^\infty a_n z^n+\ol{\sum_{n=1}^\infty b_n z^n}$ is a sense-preserving $K$-quasiconformal harmonic mapping in $\mathbb{D}$ and $h\prec \phi$, where $\phi$ is univalent and convex in $\mathbb{D}$. Then
\beas\sum_{n=1}^\infty \left(|a_n|+|b_n|\right)r^n\leq d\left(\phi(0), \pa\phi(\mathbb{D})\right)\quad\text{for}\quad r\leq \frac{K+1}{5K+1}.\eeas
The number $(K+1)/(5K+1)$ is sharp.
\end{theoD}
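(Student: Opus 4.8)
The plan is to deduce the estimate from three standard ingredients applied in turn: the covering theorem for convex univalent functions, the classical coefficient bound for functions subordinate to a convex univalent function, and the classical Bohr inequality (\ref{e2}) applied to the dilatation of $f$. Write $A:=|\phi'(0)|$ and $\lambda:=d(\phi(0),\partial\phi(\mathbb{D}))$. Since $\phi$ is convex and univalent, the covering theorem gives $\phi(\mathbb{D})\supseteq\{w:|w-\phi(0)|<A/2\}$, so $A\le 2\lambda$. Since $h\prec\phi$ with $\phi$ convex univalent, the subordination coefficient bound yields $|a_n|\le A$ for every $n\ge 1$, and hence $\sum_{n\ge1}|a_n|r^n\le A\,r/(1-r)\le 2\lambda\,r/(1-r)$.

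Next I would control the co-analytic coefficients through the dilatation. Because $f$ is sense-preserving and $K$-quasiconformal, the dilatation $\omega_f=g'/h'$ is analytic in $\mathbb{D}$ with $|\omega_f(z)|\le k:=(K-1)/(K+1)<1$; writing $\omega_f(z)=\sum_{j\ge0}c_jz^j$ and applying (\ref{e2}) to the bounded function $\omega_f/k$ gives $\sum_{j\ge0}|c_j|r^j\le k$ for $r\le 1/3$. From $g'(z)=\omega_f(z)h'(z)$, comparison of the coefficients of $z^{n-1}$ yields the identity $nb_n=\sum_{j=0}^{n-1}(n-j)c_ja_{n-j}$, hence $|b_n|r^n\le\sum_{j=0}^{n-1}|c_j|r^j\,|a_{n-j}|r^{n-j}$ once the factor $(n-j)/n\le 1$ is discarded; summing over $n\ge1$ and re-indexing the double sum by $(j,l)$ with $l=n-j\ge1$ gives the product bound $\sum_{n\ge1}|b_n|r^n\le\big(\sum_{j\ge0}|c_j|r^j\big)\big(\sum_{l\ge1}|a_l|r^l\big)\le k\sum_{l\ge1}|a_l|r^l$ for $r\le1/3$.

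Adding the two estimates, for $r\le1/3$ one obtains $\sum_{n\ge1}(|a_n|+|b_n|)r^n\le(1+k)\sum_{n\ge1}|a_n|r^n\le 2\lambda(1+k)\,r/(1-r)$, and the right-hand side is at most $\lambda$ precisely when $2(1+k)r\le1-r$, i.e. $r\le 1/(3+2k)$. Since $k=(K-1)/(K+1)$ gives $3+2k=(5K+1)/(K+1)$, this threshold equals $(K+1)/(5K+1)$, which is $\le1/3$ for every $K\ge1$, so the range restriction imposed by the Bohr step is automatically satisfied. For sharpness I would take $\phi(z)=h(z)=z/(1-z)$, which is convex univalent with $\lambda=1/2$ and $a_n\equiv1$, together with $g(z)=kz/(1-z)$, so that $\omega_f\equiv k$ and $f=h+\ol{g}$ is $K$-quasiconformal; then $\sum_{n\ge1}(|a_n|+|b_n|)r^n=(1+k)\,r/(1-r)$, which equals $\lambda=1/2$ exactly at $r=(K+1)/(5K+1)$ and exceeds it for larger $r$, so the radius is best possible.

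I expect the substantive part of the argument to be routine, since its three ingredients are quoted results; the delicate points are purely computational --- establishing the coefficient identity $nb_n=\sum_{j=0}^{n-1}(n-j)c_ja_{n-j}$ and applying submultiplicativity of majorant series so that exactly the factor $k$ survives with no extra power of $r$, and checking the constant identity $3+2k=(5K+1)/(K+1)$ so that the optimized radius matches the stated one. These are the places where a slip would change the final constant, so they are where I would be most careful.
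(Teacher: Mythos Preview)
Your proof is correct. The paper does not give its own proof of Theorem~D (it is quoted from \cite{LP2019}), but your argument is exactly in line with the machinery the paper uses in its own results: the bound $|a_n|\le|\phi'(0)|\le 2\lambda$ is Lemma~\ref{Qlem2}, and your convolution estimate $\sum_{n\ge1}|b_n|r^n\le k\sum_{n\ge1}|a_n|r^n$ for $r\le 1/3$ is precisely the content of Lemma~\ref{Qlem7}, which you have in effect reproved inline by applying Bohr's inequality to $\omega_f/k$ and using submultiplicativity of majorant series under Cauchy products. Your sharpness example $h=\phi=z/(1-z)$, $g=kh$ is the same one the paper employs (e.g.\ in Theorem~\ref{T3}).
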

\begin{theoE}\cite{LP2019} Suppose that $f(z)=h(z)+\ol{g(z)}=\sum_{n=0}^\infty a_n z^n+\ol{\sum_{n=1}^\infty b_n z^n}$ is a sense-preserving $K$-quasiconformal harmonic mapping in $\mathbb{D}$ and $h\prec \phi$, where $\phi$ is analytic and univalent in $\mathbb{D}$. Then
\beas\sum_{n=1}^\infty \left(|a_n|+|b_n|\right)r^n\leq d\left(\phi(0), \pa\phi(\mathbb{D})\right)\quad\text{for}\quad r\leq r_u(k),\eeas
where $k=(K-1)/(K+1)$ and $r_u(k)\in(0,1)$ is the root of the equation 
\beas (1-r)^2-4r(1+k\sqrt{1+r})=0.\eeas
\end{theoE}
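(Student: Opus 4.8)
The plan is to estimate the analytic part $\sum_{n\ge1}|a_n|r^n$ and the co-analytic part $\sum_{n\ge1}|b_n|r^n$ separately in terms of $|\phi'(0)|$, and then pass to $d(\phi(0),\partial\phi(\mathbb{D}))$ via the Koebe one-quarter theorem. Throughout write $c:=|\phi'(0)|$ and $\lambda:=d(\phi(0),\partial\phi(\mathbb{D}))$; since $(\phi-\phi(0))/c$ is a normalized univalent function, Koebe's theorem gives $c\le 4\lambda$.

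\emph{Analytic part.} Because $h\prec\phi$ and $\phi$ is univalent, the classical coefficient estimate for functions subordinate to a univalent function gives $|a_n|\le nc$ for all $n\ge1$ (this is precisely the ingredient behind \textrm{Theorem C}). Hence, for $|z|=r$,
\[\sum_{n=1}^{\infty}|a_n|r^n\le c\sum_{n=1}^{\infty}nr^n=\frac{cr}{(1-r)^2}.\]

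\emph{Co-analytic part.} This is the technical core. Sense-preservation forces $h'\ne0$ on $\mathbb{D}$, so $\omega:=\omega_f=g'/h'$ is analytic with $|\omega(z)|\le k$ in $\mathbb{D}$; this is the only place the $K$-quasiconformality is used. Writing $\omega(z)=\sum_{j\ge0}\omega_jz^j$ and $h'(z)=\sum_{l\ge0}(l+1)a_{l+1}z^l$ and comparing coefficients of $z^{n-1}$ in $g'=\omega h'$ gives $n b_n=\sum_{j+l=n-1}\omega_j(l+1)a_{l+1}$, hence $|b_n|\le\frac1n\sum_{j+l=n-1}|\omega_j|(l+1)|a_{l+1}|$. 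Multiplying by $r^n$, summing over $n\ge1$, interchanging the (nonnegative) summations, and using $1/(j+m)\le1/m$ for $m\ge1$ yields $\sum_{n\ge1}|b_n|r^n\le M_\omega(r)\sum_{m\ge1}|a_m|r^m$, where $M_\omega(r):=\sum_{j\ge0}|\omega_j|r^j$. From $|\omega(z)|\le k$ on $\mathbb{D}$ one has $|\omega_0|\le k$ and $\sum_{j\ge1}|\omega_j|^2\le k^2-|\omega_0|^2$, so Cauchy--Schwarz and maximization over $|\omega_0|\in[0,k]$ give $M_\omega(r)\le k/\sqrt{1-r^2}$; since $1/\sqrt{1-r^2}\le\sqrt{1+r}$ whenever $1\le(1-r)(1+r)^2$, i.e.\ for $r\le(\sqrt{5}-1)/2$, we get, on that range of $r$ and using the analytic-part bound,
\[\sum_{n=1}^{\infty}|b_n|r^n\le k\sqrt{1+r}\,\frac{cr}{(1-r)^2}.\]

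\emph{Conclusion and main obstacle.} Adding the two estimates and using $c\le 4\lambda$ gives
\[\sum_{n=1}^{\infty}(|a_n|+|b_n|)r^n\le\bigl(1+k\sqrt{1+r}\bigr)\frac{4\lambda r}{(1-r)^2},\]
and the right-hand side is $\le\lambda$ exactly when $P(r):=(1-r)^2-4r\bigl(1+k\sqrt{1+r}\bigr)\ge 0$. The function $P$ is continuous and strictly decreasing on $(0,1)$ with $P(0)=1>0$ and $P(1)<0$, so it has a unique zero $r_u(k)\in(0,1)$ and $P(r)\ge 0$ precisely for $r\le r_u(k)$; moreover $P(3-2\sqrt{2})=-4(3-2\sqrt{2})k\sqrt{4-2\sqrt{2}}\le 0$, so $r_u(k)\le 3-2\sqrt{2}<(\sqrt{5}-1)/2$, which legitimizes the bound $M_\omega(r)\le k\sqrt{1+r}$ used above, while $r_u(0)=3-2\sqrt{2}$ recovers \textrm{Theorem C}. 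The step I expect to require the most care is the co-analytic estimate: passing from the pointwise bound $|\omega_f|\le k$ to a useful bound on the majorant series $M_\omega(r)$ — the cruder Bohr bound $M_\omega(r)\le k$ holds only for $r\le 1/3$, so one must instead use the Cauchy--Schwarz estimate — together with the Fubini rearrangement of the Cauchy product; everything else is bookkeeping and an elementary analysis of $P$.
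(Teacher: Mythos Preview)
Your argument is correct. The analytic-part bound and the analysis of $P(r)$ are exactly as in the standard proof, and your verification that $r_u(k)\le 3-2\sqrt{2}<(\sqrt5-1)/2$ closes the range restriction cleanly.

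Where you diverge from the paper's route is in the co-analytic estimate. The approach recorded in this paper (see the Remark following \textrm{Theorem \ref{T2}}, which reconstructs the \cite{LP2019} argument) invokes \textrm{Lemma \ref{lem3}} to get $\sum_{n\ge1}|b_n|^2r^n\le k^2\sum_{n\ge1}|a_n|^2r^n\le 16k^2\lambda^2\,r(1+r)/(1-r)^3$ and then applies Cauchy--Schwarz in the form $\sum|b_n|r^n\le\bigl(\sum|b_n|^2r^n\bigr)^{1/2}\bigl(\sum r^n\bigr)^{1/2}$, which produces the factor $\sqrt{1+r}$ directly, valid for all $r\in(0,1)$. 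You instead unwind the Cauchy product $g'=\omega h'$ to obtain $\sum|b_n|r^n\le M_\omega(r)\sum|a_n|r^n$ and then bound the majorant $M_\omega(r)$ via Parseval and Cauchy--Schwarz on the coefficients of $\omega$, giving $M_\omega(r)\le k/\sqrt{1-r^2}$; this is only $\le k\sqrt{1+r}$ on the restricted range, which you then justify a posteriori. Your path is more self-contained (you do not need to quote the $\ell^2$ comparison \textrm{Lemma \ref{lem3}} as a black box), and the intermediate bound $k/\sqrt{1-r^2}$ is in fact \emph{sharper} than $k\sqrt{1+r}$ on the relevant range; the paper's path is slightly slicker in that the target factor $\sqrt{1+r}$ falls out without any side condition on $r$. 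Either way the conclusion is the same.
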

In 2022, Ponnusamy {\it et al.} \cite{PVW2022} obtained the following two results pertaining to Bohr's phenomenon in a refined formulation for a more general family of subordinations.
\begin{theoF}\cite{PVW2022} Let $f(z)=\sum_{n=0}^\infty a_n z^n$ and $g$ be analytic in $\Bbb{D}$ such that $g$ is univalent and convex in $\mathbb{D}$. Assume that $f\in S(g)$ 
and $\lambda=d\left(g(0), \pa g(\mathbb{D})\right)\leq 1$. Then
\beas T_f(r):=\sum_{n=1}^\infty |a_n|r^n+\left(\frac{1}{2-\lambda}+\frac{r}{1-r}\right)\sum_{n=1}^\infty |a_n|^2 r^{2n}\leq \lambda\quad\text{holds for all}\quad r\leq r_*,\eeas
where $r_*\approx 0.24683$ is the unique positive root of the equation $3r^3-5r^2-3r+1=0$ in the interval $(0,1)$. Moreover, for any $\lambda\in(0,1)$ there exists a uniquely defined $r_0\in(r_*, 1/3)$ such that $T_f(r)\leq \lambda$ for $r\in[0,r_0]$. The radius $r_0$ can be calculated as the solution of the equation 
\beas \Phi(\lambda,r)=4r^3\lambda^2-(7r^3+3r^2-3r+1)\lambda+6r^3-2r^2-6r+2=0.\eeas
The result is sharp.\end{theoF}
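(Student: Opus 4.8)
The plan is to reduce the claimed inequality $T_f(r)\le\lambda$ to an elementary inequality in the two parameters $r$ and $\lambda$ by invoking the sharp coefficient estimates available for functions subordinate to a convex univalent map, and then to analyse that inequality directly. Write $g(z)=\sum_{n=0}^{\infty}b_nz^n$. Since $f\prec g$ with $g$ convex and univalent, Rogosinski's subordination theorem gives $|a_n|\le|b_1|$ for every $n\ge1$, while the Koebe-type covering theorem for convex domains gives $g(\mathbb{D})\supseteq\{w:|w-g(0)|<|b_1|/2\}$ and hence $|b_1|\le2\lambda$. Putting $m:=|b_1|\le2\lambda$ and estimating each series termwise by $|a_n|\le m$,
\beas
T_f(r)\le \frac{mr}{1-r}+\left(\frac{1}{2-\lambda}+\frac{r}{1-r}\right)\frac{m^{2}r^{2}}{1-r^{2}}.
\eeas
The right-hand side is increasing in $m\ge0$, so it suffices to take $m=2\lambda$; after dividing by $\lambda$ the desired conclusion becomes $H(\lambda,r)\ge0$, where
\beas
H(\lambda,r):=1-\frac{2r}{1-r}-\frac{4\lambda r^{2}}{1-r^{2}}\left(\frac{1}{2-\lambda}+\frac{r}{1-r}\right).
\eeas

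Next I would record two monotonicity properties of $H$ that follow at once from this explicit formula: for fixed $r\in(0,1)$ the function $\lambda\mapsto H(\lambda,r)$ is strictly decreasing on $(0,1]$ (both $\lambda/(2-\lambda)$ and $\lambda r/(1-r)$ increase with $\lambda$), and for fixed $\lambda$ the function $r\mapsto H(\lambda,r)$ is strictly decreasing on $(0,1)$ (a constant minus a sum of products of increasing nonnegative functions). A short simplification yields
\beas
H(1,r)=\frac{3r^{3}-5r^{2}-3r+1}{(1-r)^{2}(1+r)},
\eeas
whose numerator is exactly the polynomial defining $r_*$ and is positive on $[0,r_*)$; combined with the monotonicity in $\lambda$ this gives $H(\lambda,r)\ge H(1,r)\ge0$ for all $\lambda\in(0,1]$ whenever $r\le r_*$, which is the first assertion. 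For the refined statement, fix $\lambda\in(0,1)$: clearing denominators in $H(\lambda,r)=0$ and collecting powers of $\lambda$ produces precisely $\Phi(\lambda,r)=0$. Since $r\mapsto H(\lambda,r)$ is strictly decreasing, $H(\lambda,r_*)>H(1,r_*)=0$, and $H(\lambda,1/3)=-\frac{\lambda}{2}\left(\frac{1}{2-\lambda}+\frac{1}{2}\right)<0$, this equation has a unique root $r_0$, and $r_*<r_0<1/3$.

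For sharpness I would use the half-plane maps $g_{\lambda}(z)=2\lambda z/(1-z)$, $\lambda\in(0,1]$, which are convex univalent with $g_{\lambda}(0)=0$ and $d\left(g_{\lambda}(0),\partial g_{\lambda}(\mathbb{D})\right)=\lambda$. Taking $f=g_{\lambda}\in S(g_{\lambda})$ one has $a_n=2\lambda$ for every $n\ge1$, so every estimate used above collapses to an equality and $T_f(r)=\lambda\left(1-H(\lambda,r)\right)$; hence $T_f(r_0)=\lambda$ while $T_f(r)>\lambda$ for $r$ slightly larger than $r_0$, proving $r_0$ is best possible for each fixed $\lambda$, and the case $\lambda=1$ (where $r_0=r_*$) shows $r_*$ itself cannot be enlarged. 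The one point that must be got right — and the only place where the reduction could in principle be lossy — is the simultaneous attainability of the termwise bound $|a_n|\le m$ and of $m=2\lambda$, which is exactly what this extremal family confirms; the remaining steps (the closed form of $H$, its monotonicity in $r$ used for uniqueness of $r_0$, and the algebra identifying $\Phi$) are routine once $H$ is written out.
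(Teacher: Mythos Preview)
Your proof is correct and takes essentially the same approach as the paper, which recovers Theorem~F as the $K=1$ case of its Theorem~\ref{T5}: both use the convex-subordination bound $|a_n|\le 2\lambda$ from Lemma~\ref{Qlem2} to reduce $T_f(r)\le\lambda$ to a sign condition on a single function of $(\lambda,r)$, and then argue by monotonicity. The only cosmetic differences are that you keep the rational form $H(\lambda,r)$, from which monotonicity in each variable is read off directly, whereas the paper first clears denominators to the polynomial $\Phi(\lambda,r)=G_1(\lambda,r)\big|_{k=0}$ and establishes monotonicity in $\lambda$ via $\partial^2\Phi/\partial\lambda^2\ge 0$ together with a sign check of $\partial\Phi/\partial\lambda$ at $\lambda=1$; also, your ``fix $\lambda$, find the unique $r_0$'' formulation matches Theorem~F as stated, while the paper's Theorem~\ref{T5} records the equivalent dual ``fix $r$, find the unique $\lambda_1$''.
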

\begin{theoG}\cite{PVW2022} Let $f(z)=\sum_{n=0}^\infty a_n z^n$ and $g$ be an analytic and univalent function in $\Bbb{D}$. Assume that $f\in S(g)$ 
and $\lambda=d\left(g(0), \pa g(\mathbb{D})\right)\leq 1$. Then
\beas \sum_{n=1}^\infty |a_n|r^n+\left(\frac{1}{2-\lambda}+\frac{r}{1-r}\right)\sum_{n=1}^\infty |a_n|^2 r^{2n}\leq \lambda\quad\text{holds for all}\quad r\leq r_g,\eeas
where $r_g\approx 0.128445$ is the unique positive root of the equation 
\beas (1-6r+r^2)(1-r)^2(1+r)^3-16r^2(1+r^2)=0\eeas
in the interval $(0,1)$. The result is sharp.\end{theoG}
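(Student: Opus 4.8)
The plan is to convert the refined Bohr inequality into a single sharp polynomial inequality in $r$ by inserting the coefficient estimates available for functions subordinate to a univalent map. First I would normalize the data: since $f\prec g$ with $g$ univalent and $g'(0)\neq 0$, the function $G(z)=(g(z)-g(0))/g'(0)$ lies in $\mathcal{S}$ and $F(z)=(f(z)-g(0))/g'(0)\prec G$, with $F(z)=\sum_{n\ge1}(a_n/g'(0))z^n$ (note $f(0)=g(0)$). Rogosinski's coefficient inequality for subordination to a univalent function (a consequence of the de Branges/Milin estimates) then gives $|a_n|\le n\,|g'(0)|$ for all $n\ge1$, while the Koebe one-quarter theorem applied to $G$ gives $|g'(0)|\le 4\,d(g(0),\partial g(\mathbb{D}))=4\lambda$. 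Hence $|a_n|\le 4n\lambda$ for every $n\ge1$.

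Substituting these bounds and using $\sum_{n\ge1}nr^n=r/(1-r)^2$ and $\sum_{n\ge1}n^2r^{2n}=r^2(1+r^2)/(1-r^2)^3$, the quantity to be estimated is at most
\[
\frac{4\lambda r}{(1-r)^2}+\Bigl(\frac{1}{2-\lambda}+\frac{r}{1-r}\Bigr)\frac{16\lambda^2 r^2(1+r^2)}{(1-r^2)^3}.
\]
Because $\lambda\le 1$ one has $16\lambda^2\bigl(\tfrac{1}{2-\lambda}+\tfrac{r}{1-r}\bigr)\le 16\lambda\bigl(\tfrac{1}{2-\lambda}+\tfrac{r}{1-r}\bigr)$, and the latter is increasing in $\lambda\in(0,1]$, hence at most its value $16/(1-r)$ at $\lambda=1$; also $4\lambda r/(1-r)^2\le 4r/(1-r)^2$. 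So it suffices to prove $\phi(r)\le 1$, where
\[
\phi(r):=\frac{4r}{(1-r)^2}+\frac{16 r^2(1+r^2)}{(1-r)^4(1+r)^3}.
\]
Clearing denominators by $(1-r)^4(1+r)^3$ and factoring out $(1-r)^2(1+r)^3$ from the first two terms, with $(1-r)^2-4r=1-6r+r^2$, this is exactly
\[
(1-6r+r^2)(1-r)^2(1+r)^3-16 r^2(1+r^2)\ge 0 ,
\]
i.e. the inequality governed by the equation in the statement. To see it holds precisely for $r\le r_g$, I would check that $\phi$ is strictly increasing on $(0,1)$: the first summand obviously is, and for the second the logarithmic derivative of the denominator $(1-r)^4(1+r)^3$ equals $-(1+7r)/(1-r^2)<0$, so its denominator decreases while its numerator increases. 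Since $\phi(0)=0$ and $\phi(r)\to\infty$ as $r\to 1^-$, the equation $\phi(r)=1$ has a unique root $r_g\in(0,1)$ (numerically $r_g\approx 0.128445$), this being also the unique root in $(0,1)$ of the displayed polynomial, and $\phi(r)\le 1$ exactly for $r\le r_g$.

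For sharpness I would take $g(z)=4z/(1-z)^2$, a scaled Koebe function, so that $g$ is univalent, $g(\mathbb{D})=\mathbb{C}\setminus(-\infty,-1]$, and $\lambda=d(g(0),\partial g(\mathbb{D}))=1$; then $f=g\in S(g)$ has $a_n=4n$, the estimates $|a_n|\le 4n\lambda$, $|a_n|^2\le 16n^2\lambda^2$ and $\tfrac{1}{2-\lambda}\le 1$ all become equalities, and the left-hand side equals $\phi(r)$, which exceeds $\lambda=1$ for every $r>r_g$. The step I expect to be the real obstacle is not the algebra but verifying that nothing is lost in reducing to this extremal configuration: one has to follow the joint $\lambda$-dependence of the three ingredients — the linear bound $|a_n|\le 4n\lambda$, the square bound $|a_n|^2\le 16n^2\lambda^2$, and the coefficient $\tfrac{1}{2-\lambda}$ — and confirm that their combined effect is genuinely maximized at $\lambda=1$, so that the polynomial produced by the estimates is optimal and $r_g$ cannot be enlarged.
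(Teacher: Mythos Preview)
Your strategy matches the paper's: feed in the Rogosinski--de Branges bound $|a_n|\le 4n\lambda$ (this is \textrm{Lemma \ref{Qlem1}}), reduce the resulting $\lambda$-dependent estimate to the extremal case $\lambda=1$ by monotonicity, and read off the governing polynomial. In the paper this is carried out in the proof of \textrm{Theorem \ref{T6}} (whose case $K=1$, i.e.\ $k=0$, is exactly \textrm{Theorem G}): the upper bound is rewritten as $\lambda-\lambda\,G_2(\lambda,r)/[(2-\lambda)(1-r)(1-r^2)^3]$ and one shows $\partial G_2/\partial\lambda\le 0$ on the relevant $r$-range, whence $G_2(\lambda,r)\ge G_2(1,r)\ge 0$ for $r\le r_g$. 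Your condition $\phi(r)\le 1$ is precisely $G_2(1,r)\ge 0$, and your sharpness example is the paper's Koebe configuration.

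There is, however, a genuine slip in your reduction step. As written you bound $4\lambda r/(1-r)^2\le 4r/(1-r)^2$ and $16\lambda^2\bigl(\tfrac{1}{2-\lambda}+\tfrac{r}{1-r}\bigr)\le 16/(1-r)$, which only yields $\text{(sum)}\le\phi(r)$; proving $\phi(r)\le 1$ then gives $\text{(sum)}\le 1$, not $\text{(sum)}\le\lambda$. The repair is the one your final paragraph gestures at: factor out one power of $\lambda$ first,
\[
\text{(sum)}=\lambda\left[\frac{4r}{(1-r)^2}+16\lambda\Bigl(\frac{1}{2-\lambda}+\frac{r}{1-r}\Bigr)\frac{r^2(1+r^2)}{(1-r^2)^3}\right],
\]
and then use that $\lambda\mapsto\lambda/(2-\lambda)$ and $\lambda\mapsto\lambda r/(1-r)$ are increasing on $(0,1]$ to bound the bracket by its value at $\lambda=1$, namely $\phi(r)$. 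This gives $\text{(sum)}\le\lambda\,\phi(r)\le\lambda$ for $r\le r_g$, which is what is required. With that one-line fix your argument is complete and coincides with the paper's.
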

\subsection{\bf The family of concave univalent functions with opening angle $\pi \alpha$ at infinity}
\noindent A domain $G$ is considered to have an opening angle $\pi\alpha$ at infinity if $G^c=\Bbb{C}\setminus G$ is included in a wedge-shaped region with an angle less than or equal to $\pi \alpha$ but not in a region with an angle larger than $\pi \alpha$. In the context of this study, it is necessary to introduce the following family of univalent functions:\\[1mm]
A function $g:\Bbb{D}\to\Bbb{C}$ is said to belong to the family of concave univalent functions with an opening angle $\pi \alpha$ at infinity, where $\alpha\in[1,2]$, if $g$ satisfies the following conditions:
\begin{itemize}
\item[(i)] $g\in\mathcal{A}$ and $g(1)=\infty$.
\item[(ii)] $g$ maps $\Bbb{D}$ conformally onto a set whose complement with respect to $\Bbb{C}$ is convex.
\item[(iii)] The opening angle angle of $g(\Bbb{D})$ at infinity is less than or equal to $\pi\alpha$, $\alpha\in[1,2]$.
 \end{itemize}
This family of functions is represented by $\widehat{C_0}(\alpha)$ (see \cite{AA2023,BD2018}).  For $g\in\widehat{C_0}(\alpha)$ $(\alpha\in[1,2])$, the boundary of $g(\Bbb{D})$ is
contained in a wedge shaped region with opening angle $\pi \alpha$ but not in any bigger opening angle. Further, the closed set $\Bbb{C}\setminus g(\Bbb{D})$ is convex and 
unbounded. The family of concave univalent normalized functions with opening angle $\pi \alpha$, $\alpha\in[1,2]$, at infinity is denoted by $C_0(\alpha):=\widehat{C_0}(\alpha)\cap \mathcal{S}$. In the case where $\alpha=1$, the image domain is reduced to a convex half plane. It can be observed that concave univalent functions are related to convex functions and that every $g\in\widehat{C_0}(1)$ is the convex function. For further insight into the topic of concave functions, we refer to \cite{APW2004,APW2006,AW2005,CP2007,B2012}.\\[2mm]
\indent In 2018, Bhowmik and Das \cite{BD2018} established the following result on the Bohr inequality for the family $\widehat{C_0}(\alpha)$.
\begin{theoH}\cite{BD2018}
Let $f\in\widehat{C_0}(\alpha)$, $\alpha\in[1,2]$ and $g\in S(f)$, with $f(z)=\sum_{n=0}^\infty a_nz^n$ and $g(z)=\sum_{n=0}^\infty b_nz^n$ in $\Bbb{D}$.
Then, the inequality 
\beas \sum_{n=1}^\infty |b_n|r^n\leq d\left(f(0), \pa f(\mathbb{D})\right)\quad\text{holds for}\quad |z|=r\leq r_0=(2^{1/\alpha}-1)(2^{1/\alpha}+1).\eeas
This result is sharp.
\end{theoH}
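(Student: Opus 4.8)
The plan is to reduce the assertion to two known facts about the family $\widehat{C_0}(\alpha)$ followed by an elementary one-variable computation. Put
\beas K_\alpha(z)=\frac{1}{2\alpha}\Big[\big(\tfrac{1+z}{1-z}\big)^{\alpha}-1\Big],\eeas
the canonical extremal member of $C_0(\alpha)$ (so $K_1(z)=z/(1-z)$ is the half-plane map and $K_2(z)=z/(1-z)^2$ is the Koebe function). Writing $\big(\tfrac{1+z}{1-z}\big)^{\alpha}=\sum_{n\ge0}c_nz^n$, the differential identity $(1-z^2)\frac{d}{dz}\big(\tfrac{1+z}{1-z}\big)^{\alpha}=2\alpha\big(\tfrac{1+z}{1-z}\big)^{\alpha}$ yields $c_0=1$, $c_1=2\alpha$, and $nc_n=2\alpha c_{n-1}+(n-2)c_{n-2}$ for $n\ge2$; an easy induction then gives $c_n\ge0$ for every $n$ when $\alpha\in[1,2]$, hence $[z^n]K_\alpha=c_n/(2\alpha)\ge0$ and
\beas \sum_{n\ge1}\big|[z^n]K_\alpha\big|\,r^n=K_\alpha(r)=\frac{1}{2\alpha}\Big[\big(\tfrac{1+r}{1-r}\big)^{\alpha}-1\Big],\qquad 0\le r<1.\eeas

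The first fact is the sharp coefficient estimate for $\widehat{C_0}(\alpha)$: if $f(z)=\sum_{n\ge0}a_nz^n\in\widehat{C_0}(\alpha)$, then $|a_n|\le[z^n]K_\alpha=c_n/(2\alpha)$ for all $n\ge1$, with equality for $f=K_\alpha$; in particular $\sum_{n\ge1}|a_n|r^n\le K_\alpha(r)$ for $0\le r<1$. The second fact is the covering estimate: since $\mathbb{C}\setminus f(\mathbb{D})$ is convex with opening angle at most $\pi\alpha$ at infinity and $f\in\mathcal{A}$, one has $d\big(f(0),\pa f(\mathbb{D})\big)\ge\frac{1}{2\alpha}$; for $\alpha=1$ this is the classical bound $d\ge1/2$ for convex functions, and equality holds for $f=K_\alpha$, for which $K_\alpha(\mathbb{D})=\frac{1}{2\alpha}\big(\Sigma_\alpha-1\big)$ with $\Sigma_\alpha=\{\,\zeta:|\arg\zeta|<\pi\alpha/2\,\}$ and $d(1,\pa\Sigma_\alpha)=1$ because $\alpha\ge1$.

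Since $g\prec f$ and $f$ is univalent, I would next invoke the subordination lemma (see \cite{BD2018}): if $g\prec f$ with $f$ univalent, then $\sum_{n\ge1}|b_n|r^n\le\sum_{n\ge1}|a_n|r^n$ for $r\le1/3$, the constant $1/3$ being sharp. Combined with the first fact this gives $\sum_{n\ge1}|b_n|r^n\le K_\alpha(r)$ for $r\le1/3$. Now $K_\alpha(r)\le\frac{1}{2\alpha}$ is equivalent to $\big(\tfrac{1+r}{1-r}\big)^{\alpha}\le2$, i.e.\ to $\tfrac{1+r}{1-r}\le2^{1/\alpha}$, which is precisely the condition $r\le r_0$ of the statement; moreover $\alpha\ge1$ forces $r_0\le1/3$, so the subordination lemma is available on all of $[0,r_0]$. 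Therefore, for $r\le r_0$,
\beas \sum_{n\ge1}|b_n|r^n\le K_\alpha(r)\le\frac{1}{2\alpha}\le d\big(f(0),\pa f(\mathbb{D})\big),\eeas
as claimed.

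Sharpness follows at once by taking $f=g=K_\alpha$ (so $g\prec f$ via the identity map): then $b_n=c_n/(2\alpha)\ge0$, the left-hand side equals $K_\alpha(r)$, the right-hand side equals $\frac{1}{2\alpha}$, and $K_\alpha(r)>\frac{1}{2\alpha}$ exactly for $r>r_0$, so $r_0$ cannot be enlarged. The step I expect to be the genuine obstacle is the subordination lemma $\sum_{n\ge1}|b_n|r^n\le\sum_{n\ge1}|a_n|r^n$ for $r\le1/3$: transferring a bound on the \emph{majorant series} of $f$ (rather than on $|f(z)|$ itself) to an arbitrary subordinate $g$ is not formal and really uses the Rogosinski coefficient inequalities for subordinate functions together with the coefficient estimates for univalent functions. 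A secondary point is pinning down the sharp majorant bound over $\widehat{C_0}(\alpha)$, since the per-coefficient extremal problem for this family may involve an extra rotation parameter; after the triangle inequality, however, the bound $\sum_{n\ge1}|a_n|r^n\le K_\alpha(r)$ is the one needed and is unaffected.
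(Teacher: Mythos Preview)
The paper does not prove Theorem~H itself; it is quoted as a known result from \cite{BD2018}. The closest thing to a proof in the paper is the argument for Theorem~\ref{T7}, which reduces to Theorem~H when $K=1$. Your proposal is correct and follows exactly the route taken there: you use the subordination majorant-series lemma (the paper's Lemma~\ref{Qlem5}), the coefficient bound $|a_n|\le A_n$ for $\widehat{C_0}(\alpha)$ and the covering bound $d\ge 1/(2\alpha)$ (both recorded in the paper as Lemma~\ref{Qlem6}, with your $K_\alpha$ and $c_n/(2\alpha)$ being the paper's $f_\alpha$ and $A_n$), and then the elementary observation that $f_\alpha(r)\le 1/(2\alpha)$ iff $(1+r)/(1-r)\le 2^{1/\alpha}$, together with $r_0\le 1/3$ for $\alpha\ge 1$. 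The sharpness example $f=g=K_\alpha$ is also the one the paper uses. Two cosmetic remarks: the formula for $r_0$ in the paper's statement has a typo (the product should be a quotient, as your computation shows), and the majorant-series subordination lemma you invoke is the one the paper records as Lemma~\ref{Qlem5}, citing \cite{GK2022} rather than \cite{BD2018}.
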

In 2023, Allu and Arora \cite{AA2023} established the following Bohr-Rogosinski inequality for the family $\widehat{C_0}(\alpha)$.
\begin{theoI}\cite{AA2023}
Let $f(z)=\sum_{n=0}^\infty a_nz^n$ and $g(z)=\sum_{n=0}^\infty b_nz^n$ be two analytic functions in $\mathbb{D}$ such that $f\in\widehat{C_0}(\alpha)$, $\alpha\in[1,2]$ and $g\in S(f)$.
Then, for each $N\in\mathbb{N}$, the inequality 
\beas |g(z)|+\sum_{n=N}^\infty |b_n|r^n\leq |f(0)|+d\left(f(0), \pa f(\mathbb{D})\right)\quad\text{holds for}\quad |z|=r<\min\{r_{\alpha,1}^N, 1/3\},\eeas
where $r_{\alpha,1}^N$ is the positive of the equation 
\beas F_{\alpha,1}^N(x)=\sum_{n=N}^\infty A_n x^n+f_\alpha(x)-\frac{1}{2\alpha}=0\eeas
in $(0,1)$ and $f_\alpha(x)=(1/(2\alpha))\left(((1+x)/(1-x))^\alpha -1\right)=\sum_{n=1}^\infty A_n x^n$. If $r_{\alpha,1}^N\leq 1/3$, then the radius $r_{\alpha,1}^N$ is the best possible.
\end{theoI}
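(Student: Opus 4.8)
The plan is to adapt the two-step proof of \textrm{Theorem H} so as to also control the leading term $|g(z)|$. Since $f\in\widehat{C_0}(\alpha)\subset\mathcal A$ we have $f(0)=0$, and $g\prec f$ then gives $g(0)=0$; consequently the right-hand side of the asserted inequality is simply $d\left(0,\pa f(\mathbb D)\right)$, which by the covering theorem for this family satisfies $d\left(0,\pa f(\mathbb D)\right)\ge 1/(2\alpha)$, with equality attained by $f_\alpha$. Hence it is enough to establish
\[
|g(z)|+\sum_{n=N}^\infty |b_n| r^n\ \le\ \frac{1}{2\alpha}\qquad\text{for } |z|=r<\min\{r_{\alpha,1}^N,\,1/3\}.
\]

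First I would bound $|g(z)|$ by the growth theorem for $\widehat{C_0}(\alpha)$, namely $|f(w)|\le f_\alpha(|w|)$ for all $w\in\mathbb D$ (recall $f_\alpha(x)=\frac1{2\alpha}\bigl(((1+x)/(1-x))^\alpha-1\bigr)=\sum_{n=1}^\infty A_n x^n$ with $A_n\ge0$, since $((1+z)/(1-z))^\alpha=\exp\bigl(2\alpha(z+z^3/3+z^5/5+\cdots)\bigr)$ has nonnegative Taylor coefficients, so $f_\alpha$ is increasing on $[0,1)$). Writing $g=f\circ\omega$ with $\omega$ a Schwarz function and using $|\omega(z)|\le|z|$, this yields $|g(z)|\le f_\alpha(r)$.

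For the remaining sum I would invoke the majorant-transfer lemma for this setting: if $g\prec f$ with $f\in\widehat{C_0}(\alpha)$, then $\sum_{n=N}^\infty|b_n|r^n\le\sum_{n=N}^\infty A_n r^n$ for $r\le 1/3$ — this is the step where the classical Bohr radius $1/3$ enters, through the coefficient/subordination estimates for $\widehat{C_0}(\alpha)$. Adding the two bounds, for $r\le 1/3$ we obtain
\[
|g(z)|+\sum_{n=N}^\infty |b_n| r^n\ \le\ f_\alpha(r)+\sum_{n=N}^\infty A_n r^n\ =\ \frac{1}{2\alpha}+F_{\alpha,1}^N(r).
\]
Now $F_{\alpha,1}^N$ is continuous and strictly increasing on $[0,1)$ with $F_{\alpha,1}^N(0)=-1/(2\alpha)<0$ and $F_{\alpha,1}^N(x)\to+\infty$ as $x\to1^-$; hence it has a unique zero $r_{\alpha,1}^N\in(0,1)$, and $F_{\alpha,1}^N(r)\le0$ precisely for $r\le r_{\alpha,1}^N$. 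Combining this with the restriction $r\le1/3$ forced by the transfer lemma gives the claimed inequality for all $|z|=r<\min\{r_{\alpha,1}^N,1/3\}$.

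For sharpness in the case $r_{\alpha,1}^N\le1/3$ (so that the minimum above equals $r_{\alpha,1}^N$), take $f=f_\alpha$ and $g=f_\alpha$ (so $g\prec f$ with $\omega$ the identity), for which $b_n=A_n\ge0$ and $d\left(0,\pa f_\alpha(\mathbb D)\right)=1/(2\alpha)$. At $z=r\in(0,1)$ the left-hand side equals $f_\alpha(r)+\sum_{n=N}^\infty A_n r^n=\frac1{2\alpha}+F_{\alpha,1}^N(r)$, which equals $1/(2\alpha)$ exactly at $r=r_{\alpha,1}^N$ and, by strict monotonicity of $F_{\alpha,1}^N$, strictly exceeds it for $r$ slightly larger; thus $r_{\alpha,1}^N$ cannot be replaced by anything larger. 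The main obstacle is the majorant-transfer lemma together with the growth theorem for $\widehat{C_0}(\alpha)$: the former is what carries the $1/3$ into the statement and is also the reason sharpness is asserted only when $r_{\alpha,1}^N\le1/3$. Everything else — the covering estimate, the monotonicity of $f_\alpha$ and of $F_{\alpha,1}^N$, and the Schwarz-lemma step — is routine once these tools are in place.
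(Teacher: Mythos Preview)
This statement is \textrm{Theorem I}, which the paper \emph{cites} from \cite{AA2023} but does not itself prove; there is no proof in the present paper to compare against. That said, your argument is correct and is precisely the natural one suggested by the tools the paper assembles: Lemma~\ref{Qlem6}(iii) (with $f(0)=0$, $|f'(0)|=1$ from $\widehat{C_0}(\alpha)\subset\mathcal A$) gives the growth bound $|g(z)|\le f_\alpha(r)$ via the Schwarz lemma, Lemma~\ref{Qlem5} combined with Lemma~\ref{Qlem6}(ii) gives the majorant transfer $\sum_{n\ge N}|b_n|r^n\le\sum_{n\ge N}A_nr^n$ for $r\le1/3$, and Lemma~\ref{Qlem6}(i) gives $d(0,\pa f(\Bbb D))\ge 1/(2\alpha)$. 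This is exactly the template the paper follows in its own proofs of the harmonic analogues, \textrm{Theorems~\ref{T7}} and \ref{T8}, so your approach is fully in line with the paper's methodology even though no direct comparison is possible.
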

\noindent In light of the aforementioned results, the following questions naturally arise with regard to this study.
\begin{que}\label{Q1} Can we establish the sharp improved Bohr inequality for harmonic mappings of \textrm{Theorems D} and \textrm{E} by using the non-negative quantity $S_r(h)$? \end{que}
\begin{que}\label{Q2} Can we establish the sharp improved Bohr inequality of \textrm{Theorems D} and \textrm{E} by using the concept of replacing the initial coefficients with the absolute values of the analytic function and its derivative in the majorant series?  \end{que} 
\begin{que}\label{Q3} Can we establish the refined Bohr inequality for harmonic mappings of \textrm{Theorems F} and \textrm{G}? \end{que}
\begin{que}\label{Q4} Is it possible to find the sharp Bohr radius for harmonic mappings of \textrm{Theorem H}, where the analytic component is subordinate to a function $\phi  \in\widehat{C_0}(\alpha)$? \end{que}
\begin{que}\label{Q5} Can we establish the sharp Bohr-Rogosinski inequality for harmonic mappings of \textrm{Theorem I}?  \end{que} 
The affirmative answers to Questions \ref{Q1} to \ref{Q5} are the primary purpose of this paper.\\[2mm]
The organization of the remaining part of the paper is as follows: In Section 2, we present some necessary lemmas and in Section 3, we establish two results on the refined versions of Bohr 
inequalities. In Section 4, we establish some sharp results on the improved Bohr inequalities. In Section 5, we obtain the sharp Bohr radius and Bohr-Rogosinski inequality associated with subordination to a concave univalent function. 
\section{\bf Necessary Lemmas}
The following lemmas play a vital role to prove our main results.
\begin{lem}\label{Qlem1} \cite[P. 1074-1075]{A2010} Suppose $g(z)=\sum_{n=0}^\infty a_n z^n$ is an analytic and univalent map from $\mathbb{D}$ onto a simply connected domain 
$g(\mathbb{D})$, then 
\beas \frac{1}{4}\left(1-|z|^2\right)|g'(z)|\leq d\left(g(z), \pa g(\mathbb{D})\right)\leq \left(1-|z|^2\right)|g'(z)|\quad\text{for}\quad z\in\mathbb{D}.\eeas
If $f(z)=\sum_{n=0}^\infty b_n z^n\prec g(z)$, then 
\beas |b_n|\leq n |g'(0)|\leq 4 n \;d\left(g(0), \pa g(\mathbb{D})\right)\quad\text{for}\quad n\geq 1.\eeas\end{lem}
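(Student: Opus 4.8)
The plan is to prove the two displayed statements separately. The first is the classical comparison of the boundary distance $d(g(z),\pa g(\mathbb{D}))$ with the quantity $(1-|z|^2)|g'(z)|$: I would obtain the lower estimate from the Koebe one-quarter theorem and the upper estimate from the Schwarz lemma. The coefficient bound will then follow by specializing that estimate to $z=0$ together with the sharp coefficient inequality for functions subordinate to a schlicht function.

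\emph{The distortion estimate.} Fix $z_0\in\mathbb{D}$ and let $\varphi_{z_0}(\zeta)=(\zeta+z_0)/(1+\ol{z_0}\,\zeta)$ be the automorphism of $\mathbb{D}$ with $\varphi_{z_0}(0)=z_0$ and $\varphi_{z_0}'(0)=1-|z_0|^2$. Then $G:=g\circ\varphi_{z_0}$ is univalent on $\mathbb{D}$ with $G(\mathbb{D})=g(\mathbb{D})$, $G(0)=g(z_0)$ and $G'(0)=(1-|z_0|^2)g'(z_0)$; since $g$ is univalent, $g'$, and hence $G'(0)$, does not vanish. The normalization $\Psi:=(G-G(0))/G'(0)$ belongs to $\mathcal{S}$, so by Koebe's one-quarter theorem $\Psi(\mathbb{D})\supseteq\{|w|<1/4\}$; consequently $g(\mathbb{D})=G(0)+G'(0)\Psi(\mathbb{D})$ contains the disk $\{|w-g(z_0)|<|G'(0)|/4\}$, and hence $d(g(z_0),\pa g(\mathbb{D}))\geq\frac{1}{4}(1-|z_0|^2)|g'(z_0)|$. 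For the reverse inequality put $\delta:=d(g(z_0),\pa g(\mathbb{D}))$, so that $\{|w-g(z_0)|<\delta\}\subseteq g(\mathbb{D})$; then $\phi(\zeta):=G^{-1}(g(z_0)+\delta\zeta)$ is a well-defined analytic self-map of $\mathbb{D}$ with $\phi(0)=0$, and the Schwarz lemma gives $\delta/|G'(0)|=|\phi'(0)|\leq1$, i.e. $d(g(z_0),\pa g(\mathbb{D}))\leq(1-|z_0|^2)|g'(z_0)|$. As $z_0$ was arbitrary, the first assertion follows.

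\emph{The coefficient estimate.} Taking $z_0=0$ above yields $|g'(0)|\leq4\,d(g(0),\pa g(\mathbb{D}))$, which is the second inequality in the stated chain. For the first, note that $f\prec g$ forces $f(0)=g(0)$, so $b_0=a_0$; writing $\Psi(z):=(g(z)-a_0)/g'(0)\in\mathcal{S}$ and $\Phi(z):=(f(z)-a_0)/g'(0)=\sum_{n\geq1}(b_n/g'(0))z^n$ we have $\Phi\prec\Psi$. The sharp coefficient inequality for a function subordinate to a schlicht function (the Rogosinski conjecture, now a theorem as a consequence of de Branges' proof of the Bieberbach conjecture) gives $|b_n/g'(0)|\leq n$, whence $|b_n|\leq n|g'(0)|\leq4n\,d(g(0),\pa g(\mathbb{D}))$ for every $n\geq1$.

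The only step beyond elementary function theory is the bound $|b_n|\leq n|g'(0)|$: estimating $|f(z)-f(0)|$ on circles via the Schwarz lemma and then using the growth theorem for $\mathcal{S}$ only produces $|b_n|=O(n^2|g'(0)|)$, so the full strength of the Rogosinski conjecture is genuinely required here; everything else is classical. (That the constants $n$ and $4n$ are optimal is seen by taking $g(z)=z/(1-z)^2$ and $f=g$, for which $d(g(0),\pa g(\mathbb{D}))=1/4$ and equality holds throughout, matching the sharpness of Theorem C.)
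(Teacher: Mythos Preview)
Your proof is correct. Note, however, that the paper does not actually prove this lemma: it is quoted verbatim from \cite[pp.~1074--1075]{A2010} as a known preliminary result, so there is no ``paper's own proof'' to compare against. That said, your argument is precisely the standard one underlying the cited reference: the distortion inequality is the Koebe one-quarter theorem combined with the Schwarz lemma (after precomposing with a disk automorphism to move $z_0$ to the origin), and the coefficient bound is the Rogosinski conjecture---now a theorem via de~Branges---applied to the normalized subordination $\Phi\prec\Psi\in\mathcal{S}$. Your closing remark that the sharp bound $|b_n|\le n|g'(0)|$ genuinely requires the full Rogosinski/de~Branges theorem (rather than, say, the growth theorem alone) is also accurate and worth keeping.
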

\begin{lem}\label{Qlem2} \cite[P. 1074]{A2010} Suppose $g(z)=\sum_{n=0}^\infty a_n z^n$ is an analytic and univalent map from $\mathbb{D}$ onto a convex domain 
$g(\mathbb{D})$, then 
\beas \frac{1}{2}\left(1-|z|^2\right)|g'(z)|\leq d\left(g(z), \pa g(\mathbb{D})\right)\leq \left(1-|z|^2\right)|g'(z)|\quad\text{for}\quad z\in\mathbb{D}.\eeas
If $f(z)=\sum_{n=0}^\infty b_n z^n\prec g(z)$, then 
\beas |b_n|\leq  |g'(0)|\leq  2 \;d\left(g(0), \pa g(\mathbb{D})\right)\quad\text{for}\quad n\geq 1.\eeas\end{lem}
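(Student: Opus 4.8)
The plan is to reduce both assertions to estimates at the origin by precomposing $g$ with a disc automorphism, and then to invoke two classical facts: the Koebe-type distortion bound for convex conformal maps and Rogosinski's coefficient theorem for functions subordinate to a convex univalent majorant.

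For the two-sided estimate, fix $z_0\in\mathbb{D}$ and put $\varphi_{z_0}(z)=(z+z_0)/(1+\ol{z_0}\,z)$, an automorphism of $\mathbb{D}$ with $\varphi_{z_0}(0)=z_0$ and $\varphi_{z_0}'(0)=1-|z_0|^2$. Then $G:=g\circ\varphi_{z_0}$ maps $\mathbb{D}$ conformally onto the same convex domain $g(\mathbb{D})$, with $G(0)=g(z_0)$ and $|G'(0)|=(1-|z_0|^2)|g'(z_0)|$, so it suffices to prove $\tfrac12|G'(0)|\le d(G(0),\pa G(\mathbb{D}))\le|G'(0)|$ for an arbitrary convex conformal map $G$. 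The upper bound will follow from the Schwarz lemma applied to $w\mapsto G^{-1}(G(0)+\delta w)$, where $\delta:=d(G(0),\pa G(\mathbb{D}))$ and the disc of radius $\delta$ about $G(0)$ is contained in $G(\mathbb{D})$; this map fixes $0$ and sends $\mathbb{D}$ into $\mathbb{D}$, so its derivative at $0$, namely $\delta/G'(0)$, has modulus at most $1$, giving $\delta\le|G'(0)|$. For the lower bound I would normalise, writing $H(z)=(G(z)-G(0))/G'(0)$, a convex univalent function with $H(0)=0$ and $H'(0)=1$, and use the classical bound $d(0,\pa H(\mathbb{D}))\ge 1/2$ recalled in the Introduction, so that $\delta=|G'(0)|\,d(0,\pa H(\mathbb{D}))\ge\tfrac12|G'(0)|$. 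Reverting to $g$ and $z_0$ gives the displayed chain for every $z\in\mathbb{D}$.

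For the coefficient inequalities, taking $z=0$ in the lower estimate just obtained yields $|g'(0)|\le 2\,d(g(0),\pa g(\mathbb{D}))$, which is the right-hand inequality. For the left-hand one, given $f(z)=\sum_{n\ge 0}b_nz^n\prec g$ with $g$ univalent and convex, I would quote Rogosinski's subordination theorem: the Taylor coefficients of a function subordinate to a convex univalent majorant are bounded in modulus by the modulus of that majorant's first coefficient, hence $|b_n|\le|g'(0)|$ for all $n\ge 1$. Chaining this with the previous bound completes the proof.

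I do not anticipate a genuine difficulty: the argument is a routine automorphism reduction combined with two standard theorems. The one place that deserves attention is the lower bound in the distortion estimate, where the sharper constant $1/2$ (in place of the $1/4$ appearing in Lemma~\ref{Qlem1}) is precisely what convexity of $g(\mathbb{D})$ provides; everything else reduces to the Schwarz lemma and Rogosinski's inequality, and the same scheme with the Koebe $1/4$-theorem substituted for the convex $1/2$-bound proves Lemma~\ref{Qlem1}.
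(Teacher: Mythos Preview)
Your argument is correct and follows the standard route: automorphism reduction to the origin, Schwarz lemma for the upper estimate, the convex Koebe $1/2$-bound for the lower estimate, and Rogosinski's subordination theorem for the coefficient inequality. Note, however, that the paper does not supply its own proof of this lemma; it is quoted as a known result from \cite[P.~1074]{A2010}, so there is no in-paper argument to compare against. Your proof is precisely the classical derivation that underlies the cited statement.
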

\begin{lem}\cite{KPS2018}\label{lem3} Suppose that $h(z)=\sum_{n=0}^\infty a_nz^n$ and $g(z)=\sum_{n=0}^\infty b_nz^n$ are two analytic functions in $\mathbb{D}$ such that $|g’(z)|\leq k|h’(z)|$ in $\mathbb{D}$ and for some $k\in [0,1)$ with $|h(z)|\leq 1$. Then,
\beas \sum_{n=1}^\infty n|b_n|^2 r^{2n}\leq k^2\sum_{n=1}^\infty n|a_n|^2 r^{2n}\;\;\text{and}\;\;
\sum_{n=1}^\infty |b_n|^2 r^n\leq k^2 \sum_{n=1}^\infty |a_n|^2 r^n\;\;\text{for}\;\;|z|=r<1.\eeas\end{lem}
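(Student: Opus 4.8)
The plan is to prove the two displayed inequalities separately, deriving the second from the first by an elementary integration in a parameter; the argument is entirely computational and I do not expect a serious obstacle.

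\medskip
\noindent\textbf{Step 1 (the first inequality).} Here I would use the planar area functional $S_s$ recalled above. Writing $h(z)=\sum_{n\ge0}a_nz^n$ and $g(z)=\sum_{n\ge0}b_nz^n$, a routine computation in polar coordinates (integrate $|h'(\rho e^{i\theta})|^2$ over $\theta$, which annihilates the off-diagonal terms, and then over $\rho\in(0,s)$) yields, for each $s\in(0,1)$,
\[
\frac1\pi\int_{\mathbb{D}_s}|h'(z)|^2\,dA(z)=\sum_{n=1}^\infty n|a_n|^2 s^{2n},
\]
together with the same identity with $(h,a_n)$ replaced by $(g,b_n)$. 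Since $|g'(z)|\le k|h'(z)|$ in $\mathbb{D}$ gives $|g'(z)|^2\le k^2|h'(z)|^2$ pointwise, integrating this over $\mathbb{D}_s$ and comparing with the two identities gives
\[
\sum_{n=1}^\infty n|b_n|^2 s^{2n}\le k^2\sum_{n=1}^\infty n|a_n|^2 s^{2n}\qquad(0<s<1),
\]
which is the first inequality upon taking $s=r$. (The hypothesis $|h|\le1$ is used only to guarantee convergence of the series: it forces $\sum_n|a_n|^2\le1$ by Parseval, and via Schwarz--Pick $|g'(z)|\le k/(1-|z|^2)$, so $\int_{\mathbb{D}_s}|g'|^2\,dA<\infty$.)

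\medskip
\noindent\textbf{Step 2 (the second inequality).} The key is the elementary identity $r^{n}/n=2\int_0^{\sqrt r}s^{2n-1}\,ds$ for $n\ge1$. Multiplying by $n|b_n|^2$, summing over $n$, and interchanging sum and integral --- legitimate by Tonelli's theorem, since all terms are nonnegative and the series $\sum_{n\ge1}n|b_n|^2 s^{2n-1}$ has no singularity at $s=0$ (its lowest power of $s$ is $1$) --- I get
\begin{align*}
\sum_{n=1}^\infty|b_n|^2 r^{n}&=2\int_0^{\sqrt r}\Big(\sum_{n=1}^\infty n|b_n|^2 s^{2n-1}\Big)\,ds\\
&\le 2k^2\int_0^{\sqrt r}\Big(\sum_{n=1}^\infty n|a_n|^2 s^{2n-1}\Big)\,ds=k^2\sum_{n=1}^\infty|a_n|^2 r^{n},
\end{align*}
where the middle step follows by dividing the Step~1 estimate $\sum_{n\ge1}n|b_n|^2 s^{2n}\le k^2\sum_{n\ge1}n|a_n|^2 s^{2n}$ by $s>0$, and the last step uses the same identity in reverse. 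This is the second inequality.

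\medskip
\noindent I do not anticipate a genuine difficulty. The only points requiring a moment of care are the justification of the termwise integration in Step~2 (handled by Tonelli) and the conceptual observation that the second inequality is obtained by ``integrating up'' the first one against the weight $r^n/n=2\int_0^{\sqrt r}s^{2n-1}\,ds$; attempting a direct Parseval/area argument for $\sum_{n\ge1}|b_n|^2 r^n$ instead would require a pointwise bound on $|g(z)-g(0)|$ in terms of $|h(z)-h(0)|$, which the hypothesis $|g'|\le k|h'|$ does not supply.
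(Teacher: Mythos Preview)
Your argument is correct. Note, however, that the paper does not supply its own proof of this lemma: it is quoted from \cite{KPS2018} and used as a black box. Your Step~1 (comparing the area integrals of $h'$ and $g'$ over $\mathbb{D}_s$ via the pointwise bound $|g'|^2\le k^2|h'|^2$) is the standard derivation, and your Step~2 (integrating the first inequality against the kernel $s^{2n-1}$ to recover the second) is precisely the device the authors themselves deploy later, in the proof of \textrm{Theorem~\ref{T5}}, where they write ``By integrating the inequality (\ref{g3}) with respect to $r^2$ after dividing by $r^2$'' to pass from $\sum_n n|b_n|^2 r^{2n}\le k^2\sum_n n|a_n|^2 r^{2n}$ to $\sum_n|b_n|^2 r^{2n}\le k^2\sum_n|a_n|^2 r^{2n}$. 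So your approach is entirely consonant with the paper's own methods, even though the paper does not record a proof of the lemma itself.
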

\begin{lem}\label{Qlem5}\cite{GK2022} Suppose that $h(z)=\sum_{n=0}^\infty a_nz^n$ and $g(z)=\sum_{n=0}^\infty b_nz^n$ are two analytic functions in $\mathbb{D}$ and $h\prec g$, then for $N\in\Bbb{N}$, we have 
\beas \sum_{n=N}^\infty |a_n| r^n\leq \sum_{n=N}^\infty |b_n| r^n\quad\text{holds for}\quad |z|=r\leq 1/3.\eeas\end{lem}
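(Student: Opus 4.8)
\emph{Proof proposal.} The plan is to reduce the subordination to a Schwarz function and then feed the sharp classical Bohr inequality into the powers that arise. By the definition of subordination, $h\prec g$ means $h=g\circ\omega$ for some $\omega\in\mathcal{B}$ with $\omega(0)=0$, and by the Schwarz lemma the function $\varphi(z):=\omega(z)/z$ again belongs to $\mathcal{B}$. Expanding $g$ about the origin and substituting $\omega$, I would write
\[
h(z)=\sum_{j=0}^{\infty}b_j\,\omega(z)^{j}=\sum_{j=0}^{\infty}b_j\,z^{j}\,\varphi(z)^{j},
\]
so that, setting $\varphi(z)^{j}=\sum_{m=0}^{\infty}c_{j,m}z^{m}$ and comparing Taylor coefficients, $a_n=\sum_{j=0}^{n}b_j\,c_{j,\,n-j}$ for every $n\ge 0$.

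The first substantive step is the coefficient bound for the powers $\varphi^{j}$. For each fixed $j$, $\varphi^{j}$ again maps $\mathbb{D}$ into $\overline{\mathbb{D}}$, so the classical Bohr inequality (see (\ref{e2})) gives $\sum_{m=0}^{\infty}|c_{j,m}|\,r^{m}\le 1$ whenever $r\le 1/3$, whence
\[
\sum_{n=j}^{\infty}|c_{j,\,n-j}|\,r^{n}=r^{\,j}\sum_{m=0}^{\infty}|c_{j,m}|\,r^{m}\le r^{\,j}\qquad(r\le 1/3).
\]
This is the only place the number $1/3$ enters the argument, and it is exactly what pins it down; everything afterwards is term-by-term manipulation of non-negative series.

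To conclude, I would estimate the tail of the majorant series: from $|a_n|\le\sum_{j=0}^{n}|b_j|\,|c_{j,\,n-j}|$ and interchanging the order of summation,
\[
\sum_{n=N}^{\infty}|a_n|\,r^{n}\le\sum_{j=0}^{\infty}|b_j|\sum_{n=\max\{j,N\}}^{\infty}|c_{j,\,n-j}|\,r^{n}.
\]
For the indices $j\ge N$ the inner sum is at most $r^{\,j}$ by the previous step, so those terms by themselves contribute at most $\sum_{j=N}^{\infty}|b_j|\,r^{\,j}$, which is precisely the right-hand side of the asserted inequality; in particular, when $N=1$ no other index occurs and the lemma follows immediately. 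The remaining point — and the one I expect to be the real obstacle — is to dispose of the contribution of the low-order coefficients $b_1,\dots,b_{N-1}$, which still feed into $a_n$ for $n\ge N$ through the higher Taylor coefficients of the $\varphi^{j}$; controlling these ``leaked'' contributions, which calls for finer information on $\sum_{m\ge N-j}|c_{j,m}|\,r^{m}$ than the bare Bohr bound supplies, is exactly what is carried out in \cite{GK2022}, and once it is in hand the proof is complete.
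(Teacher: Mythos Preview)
The paper does not supply a proof of this lemma at all; it simply cites it from \cite{GK2022} and only ever invokes the case $N=1$ (in the proofs of Theorems~\ref{T7} and~\ref{T8}). So there is no ``paper's own proof'' to compare with. Your argument for $N=1$ is correct and is in fact the standard one (going back to Bhowmik--Das \cite{BD2018}): once you know $\sum_{m\ge 0}|c_{j,m}|r^{m}\le 1$ for each $j$ and $r\le 1/3$, the $j=0$ term drops out (since $\varphi^{0}\equiv 1$) and the remaining terms already give $\sum_{n\ge 1}|a_n|r^{n}\le \sum_{j\ge 1}|b_j|r^{j}$.

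The gap you flag for $N\ge 2$ is not a technicality to be patched, however---it is fatal, because the lemma as stated is \emph{false} for $N\ge 2$. Take $g(z)=z$ and $h(z)=z^{2}=g(\omega(z))$ with $\omega(z)=z^{2}$; then $h\prec g$, yet for $N=2$ the left side equals $r^{2}$ while the right side is $0$. So the ``leaked'' contributions from $b_{1},\dots,b_{N-1}$ that you isolate cannot be absorbed in general, and no amount of finer information on the tails $\sum_{m\ge N-j}|c_{j,m}|r^{m}$ will rescue the inequality without extra hypotheses on $g$ (for instance, that the majorant coefficients of $g$ dominate those of every shift, as happens for the concave-univalent extremal $f_\alpha$). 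In short: your $N=1$ proof is complete and matches the case the paper actually needs; for $N\ge 2$ the statement itself, not your approach, is the problem, and you should not defer its resolution to \cite{GK2022}.
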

\begin{lem}\label{Qlem6}\cite[Theorem 1]{BD2018}
Let $f\in\widehat{C_0}(\alpha)$, $\alpha\in[1,2]$ such that $f(z)=\sum_{n=0}^\infty a_n z^n$, then
\item[(i)] $|f'(0)|\leq 2\alpha \; d\left(f(0),\pa f(\Bbb{D})\right)$,
\item[(ii)] $|a_n|\leq A_n \left|f'(0)\right|$ for $n\geq 1$,
\item[(iii)] $|f(z)-f(0)|\leq |f'(0)|f_\alpha(r)$, where $f_\alpha(z)$ is defined by 
\bea\label{f4} f_\alpha(z):=\frac{1}{2\alpha}\left(\left(\frac{1+z}{1-z}\right)^\alpha-1\right)=\sum_{n=1}^\infty A_n z^n\quad\text{with}\quad A_1=1. \eea
All the inequalities are sharp for the function $f_\alpha(z)$.
\end{lem}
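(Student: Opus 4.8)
The plan is to obtain parts (i)--(iii) from a single two-sided distortion bound for $|f'|$, which itself comes from the integral representation of the logarithmic derivative that characterises concave functions.

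\emph{Step 1 (representation).} Assume $\alpha\in(1,2]$; the endpoint $\alpha=1$, where $\widehat{C_0}(1)$ consists of maps onto a half-plane, is classical. Put
\[
p_f(z):=\frac{2}{\alpha-1}\left(\frac{\alpha+1}{2}\cdot\frac{1+z}{1-z}-1-\frac{zf''(z)}{f'(z)}\right).
\]
A standard fact about concave functions is that $f\in\widehat{C_0}(\alpha)$ forces $p_f(0)=1$ and $\mathrm{Re}\,p_f(z)>0$ in $\mathbb{D}$ (one checks $p_{f_\alpha}(z)=(1-z)/(1+z)$). Writing $p_f(z)=\int_{\partial\mathbb{D}}\tfrac{1+\bar\zeta z}{1-\bar\zeta z}\,d\mu(\zeta)$ by Herglotz, solving the displayed identity for $f''/f'$ and integrating from $0$ (using $f\in\mathcal{A}$) I would get
\[
\log\frac{f'(z)}{f'(0)}=-(\alpha+1)\log(1-z)+(\alpha-1)\int_{\partial\mathbb{D}}\log(1-\bar\zeta z)\,d\mu(\zeta),
\]
with the extremal $f_\alpha$ corresponding to $\mu=\delta_{-1}$. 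This identity drives everything that follows.

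\emph{Step 2 (parts (iii) and (i)).} Taking moduli in Step 1 and using $1-r\le|1-z|\le 1+r$ and $1-r\le|1-\bar\zeta z|\le 1+r$ for $|z|=r$, together with $\alpha-1\ge 0$ and $\alpha+1>0$, I would first get the distortion estimate
\[
|f'(0)|\,\frac{(1-r)^{\alpha-1}}{(1+r)^{\alpha+1}}\le|f'(z)|\le|f'(0)|\,\frac{(1+r)^{\alpha-1}}{(1-r)^{\alpha+1}}=|f'(0)|\,f_\alpha'(r).
\]
Integrating the upper bound along the radius gives $|f(z)-f(0)|\le|f'(0)|\int_0^r f_\alpha'(t)\,dt=|f'(0)|f_\alpha(r)$, i.e. (iii). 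For (i), pick a nearest boundary point $w_0\in\pa f(\mathbb{D})$ to $f(0)$; since every point within distance $|w_0-f(0)|$ of $f(0)$ lies in $f(\mathbb{D})$, the open segment $[f(0),w_0)$ lies in $f(\mathbb{D})$, and $f^{-1}$ maps it to a curve $\gamma$ in $\mathbb{D}$ starting at $0$ with $|\gamma|\to1$. The standard covering-theorem argument applied with the lower distortion bound then gives
\[
d\left(f(0),\pa f(\mathbb{D})\right)=|w_0-f(0)|\ge|f'(0)|\int_0^1\frac{(1-t)^{\alpha-1}}{(1+t)^{\alpha+1}}\,dt=\frac{|f'(0)|}{2\alpha},
\]
where the substitution $u=(1-t)/(1+t)$ turns the integral into $\tfrac12\int_0^1 u^{\alpha-1}\,du=\tfrac1{2\alpha}$. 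This is (i).

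\emph{Step 3 (part (ii) and sharpness).} From Step 1, $a_n=\tfrac1n[z^{n-1}]f'(z)$ and $f'(z)/f'(0)=(1-z)^{-(\alpha+1)}\exp\!\left(-(\alpha-1)\sum_{k\ge1}\tfrac{\overline{\gamma_k}}{k}z^k\right)$ with $\gamma_k=\int_{\partial\mathbb{D}}\zeta^k\,d\mu$, so (ii) reduces to $|[z^{n-1}](f'/f'(0))|\le[z^{n-1}]f_\alpha'(z)$. Part (ii) is where I expect the main difficulty: replacing each $|\gamma_k|$ by the trivial bound $1$ — or even routing through the convex subordination $(1+\omega(z))^{\alpha-1}\prec(1+z)^{\alpha-1}$ and Rogosinski's coefficient inequality — already overshoots at $n=3$, because the Taylor coefficients of $f_\alpha'$ come from a signed sum with heavy cancellation and equality demands $\mu=\delta_{-1}$, so one must exploit the moment structure of $\mu$ globally rather than term by term. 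The route I would attempt is a variational/extreme-point reduction of the maximum of $|a_n|$ to point masses $\mu=\delta_\zeta$, for which $f'(z)=(1-\bar\zeta z)^{\alpha-1}/(1-z)^{\alpha+1}$ and a direct computation yields $|a_n|\le A_n$ with equality only at $\zeta=-1$; alternatively one simply invokes the known sharp coefficient bound for $\widehat{C_0}(\alpha)$. Finally, $f=f_\alpha$ witnesses sharpness of all three parts: $|f_\alpha'(0)|=1$ while $f_\alpha(\mathbb{D})$ is the sector $\{(2\alpha)^{-1}(\zeta-1):|\arg\zeta|<\pi\alpha/2\}$, whose closest boundary point to $0$ is its vertex $-1/(2\alpha)$, so (i) is an equality; (ii) is an equality trivially; and since $f_\alpha$ has nonnegative Taylor coefficients, $|f_\alpha(z)|=f_\alpha(r)$ along the positive radius, so (iii) is an equality.
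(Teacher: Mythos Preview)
The paper does not prove this lemma at all: it is quoted verbatim from \cite[Theorem~1]{BD2018} and used as a black box in Section~5. So there is no ``paper's proof'' to compare with, and your write-up is really an attempted independent proof of the Bhowmik--Das result.

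On the substance: your Steps~1--2 are the right route. The representation via $p_f$ and Herglotz is the standard characterisation of $\widehat{C_0}(\alpha)$ (Avkhadiev--Pommerenke--Wirths), the two-sided distortion bound follows exactly as you say, and the integrations for (iii) and for the covering estimate (i) are correct, including the evaluation $\int_0^1(1-t)^{\alpha-1}(1+t)^{-(\alpha+1)}\,dt=1/(2\alpha)$ and the sharpness discussion for $f_\alpha$. The only place I would tighten is the ``curve to the boundary'' argument for~(i): you need the standard device that since the integrand depends only on $|z|$ and is decreasing, one may compare against the radial integral by passing to the first crossing of each circle $|z|=R$; as written, ``$|\gamma|\to1$'' plus the lower distortion bound does not by itself give the inequality without this monotonicity step.

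Part~(ii) is where your proposal has a genuine gap, and you say so yourself. Neither the crude bound $|\gamma_k|\le1$ nor Rogosinski applied to $(1+\omega)^{\alpha-1}$ recovers $|a_n|\le A_n$, as you observe. The extreme-point reduction you sketch is the right instinct, but you stop short of carrying it out; the remaining computation for point masses $\mu=\delta_\zeta$ (i.e.\ showing that the $n$-th coefficient of $(1-\bar\zeta z)^{\alpha-1}(1-z)^{-(\alpha+1)}$ is maximised in modulus at $\zeta=-1$) is itself nontrivial and is precisely the content of the coefficient theorems in \cite{APW2006,AW2005,BD2018}. So as it stands, (ii) is asserted rather than proved, and your fallback ``one simply invokes the known sharp coefficient bound for $\widehat{C_0}(\alpha)$'' is circular, since that bound \emph{is} the statement you are trying to establish.
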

\begin{rem}Note that \beas \frac{1}{2\alpha}\left(\left(\frac{1+z}{1-z}\right)^\alpha-1\right)=\sum_{n=1}^\infty A_n z^n,\eeas
where $A_n\geq 0$ for $n\geq 1$, then
\beas A_1=1, A_2=\alpha, A_3=\frac{1}{3}(2\alpha^2+1), A_4=\frac{1}{3}(\alpha^3+2\alpha), A_5=\frac{1}{15}(2\alpha^4+10\alpha^2+3),\cdots.\eeas
\end{rem}
\begin{lem}\label{Qlem7}\cite{AKP2019,LPW2020} Suppose that $h(z)=\sum_{n=0}^\infty a_nz^n$ and $g(z)=\sum_{n=0}^\infty b_nz^n$ are two analytic functions in $\mathbb{D}$. If $|g'(z)|\leq k|h'(z)|$ in $\Bbb{D}$ for some $k\in(0,1]$, then
\beas \sum_{n=1}^\infty |b_n| r^n\leq k\sum_{n=1}^\infty |a_n| r^n\quad\text{for all}\quad |z|=r\leq 1/3.\eeas\end{lem}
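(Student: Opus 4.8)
The plan is to convert the pointwise bound $|g'(z)|\le k|h'(z)|$ into a Schwarz-type majorant, apply the classical Bohr inequality to that majorant, and then propagate the estimate back to $g$ and $h$ themselves through a Cauchy product of their derivatives. Note that no assumption $|h(z)|\le 1$ is available here, so the argument must rely only on the dilatation-type hypothesis.

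First I would dispose of the degenerate case $h'\equiv 0$: then $h$ is constant, $a_n=0$ for all $n\ge 1$, the hypothesis forces $g'\equiv 0$, hence $b_n=0$, and the asserted inequality reads $0\le 0$. So assume $h'\not\equiv 0$. Its zeros are then isolated, and at any such zero the order of vanishing of $g'$ is at least that of $h'$ (because $|g'|\le k|h'|$); consequently $\omega:=g'/h'$ has only removable singularities and extends to a function analytic on $\mathbb{D}$ with $|\omega(z)|\le k$ throughout $\mathbb{D}$ (the bound holds off the zeros of $h'$ and then everywhere by continuity). Write $\omega(z)=\sum_{j=0}^\infty c_j z^j$. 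Since $k\in(0,1]$, the function $\omega/k$ belongs to $\mathcal{B}$, so the classical Bohr inequality (\ref{e2}) applied to $\omega/k$ gives $\sum_{j=0}^\infty |c_j| r^j\le k$ for $r\le 1/3$.

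The remaining step is bookkeeping on Taylor coefficients. From $g'(z)=\omega(z)h'(z)$, comparison of the coefficient of $z^{n-1}$ yields $n b_n=\sum_{m=1}^n m\,a_m\,c_{n-m}$, hence $|b_n|\le \tfrac1n\sum_{m=1}^n m|a_m|\,|c_{n-m}|$. Multiplying by $r^n$, summing over $n\ge 1$, and interchanging the order of summation (legitimate by absolute convergence for $r<1$) gives
\beas
\sum_{n=1}^\infty |b_n| r^n\le \sum_{m=1}^\infty m|a_m| r^m\sum_{j=0}^\infty \frac{|c_j|}{m+j} r^j\le \sum_{m=1}^\infty |a_m| r^m\sum_{j=0}^\infty |c_j| r^j,
\eeas
where the last inequality uses $m/(m+j)\le 1$. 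Combining this with the bound $\sum_{j=0}^\infty |c_j| r^j\le k$ for $r\le 1/3$ obtained above finishes the proof.

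I do not anticipate a serious obstacle; the only points that deserve a word of care are the analyticity of $\omega$ across the zeros of $h'$ and the legitimacy of the series rearrangement (both automatic inside $\mathbb{D}$). The single substantive input is the classical Bohr inequality with its sharp radius $1/3$, which is exactly what produces the constant $1/3$ in the statement, while the elementary estimate $m/(m+j)\le 1$ is what allows the computation to close without any further loss.
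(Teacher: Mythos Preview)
Your argument is correct. The paper itself does not supply a proof of this lemma; it is quoted from \cite{AKP2019,LPW2020}, and the argument you give is essentially the one found there: write $g'=\omega h'$ with $|\omega|\le k$, apply the classical Bohr inequality to $\omega/k$ to get $\sum_{j\ge 0}|c_j|r^j\le k$ for $r\le 1/3$, and then pass from the Cauchy product $nb_n=\sum_{m=1}^n m\,a_m c_{n-m}$ to the desired majorant via $m/(m+j)\le 1$. The two points you flag (removability of the singularities of $\omega$ at the zeros of $h'$, and the Fubini-type interchange justified by $\sum_{m,j}|a_m||c_j|r^{m+j}<\infty$ for $r<1$) are the only things that need a word, and you have handled them.
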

\section{\bf Refined versions of Bohr inequalities}
In the following, we obtain the refined version of Bohr's inequality in the context of \textrm{Theorem F}  for harmonic mappings associated with subordination. 
\begin{theo}\label{T5} Suppose that $f(z)=h(z)+\ol{g(z)}=\sum_{n=0}^\infty a_n z^n+\ol{\sum_{n=1}^\infty b_n z^n}$ is a sense-preserving $K$-quasiconformal harmonic mapping in $\mathbb{D}$ and $h\prec \phi$, where $\phi$ is univalent and convex in $\mathbb{D}$. Then,
\beas S_1(r):=\sum_{n=1}^\infty |a_n| r^n+\sum_{n=1}^\infty |b_n|r^n+\left(\frac{1}{2-\lambda}+\frac{r}{1-r}\right)\left(\sum_{n=1}^\infty |a_n|^2r^{2n}+\sum_{n=1}^\infty |b_n|^2r^{2n}\right)\leq\lambda\eeas
for $r\leq r_0$, where $\lambda=d\left(\phi(0), \pa\phi(\mathbb{D})\right)\leq 1$ and $r_0\in(0,(K+1)/(5K+1))$ is the unique positive root of the equation 
\beas \left(3+2\left(\frac{K-1}{K+1}\right)\right)r^3-\left(5+4\left(\frac{K-1}{K+1}\right)^2\right)r^2-\left(3+2\left(\frac{K-1}{K+1}\right)\right)r+1=0.\eeas
Moreover, for any $r\in(r_0, (K+1)/(5K+1))$, there exists a uniquely defined $\lambda_1(r)\in(0,1)$ such that $S_1(r)\leq \lambda$ for $\lambda\leq \lambda_1(r)$, where 
$\lambda_1(r)$ satisfy the equation $G_1(\lambda_1(r),r)\\=0$, where  
\beas G_1(\lambda,r)&=&4\left(k^2+1\right)r^3\lambda^2-\left(\left(7+2k+4k^2\right)r^3+\left(3+4k^2\right)r^2-(3+2k)r+1\right)\lambda\\
&&+(6+4k)r^3-2r^2-(6+4k)r+2,\eeas 
where $k=(K-1)/(K+1)$.
\end{theo}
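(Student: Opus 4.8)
The plan is to bound $S_1(r)$ from above by a function of $\lambda$ and $r$ alone, using only the subordination of $h$ and the quasiconformality of $f$, and then to reduce the required inequality to the sign of a single bivariate polynomial $G_1(\lambda,r)$. Put $k=(K-1)/(K+1)\in[0,1)$; since $f=h+\overline{g}$ is sense-preserving and $K$-quasiconformal, $|g'(z)|\le k|h'(z)|$ throughout $\mathbb{D}$. Note first that $(K+1)/(5K+1)=1/(3+2k)\le 1/3$, so all the lemmas invoked below apply on the whole range of radii considered. Because $\phi$ is convex univalent and $h\prec\phi$, Lemma~\ref{Qlem2} gives $|a_n|\le|\phi'(0)|\le 2\lambda$ for $n\ge1$, whence $\sum_{n\ge1}|a_n|r^n\le 2\lambda r/(1-r)$ and $\sum_{n\ge1}|a_n|^2r^{2n}\le 4\lambda^2 r^2/(1-r^2)$. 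For the co-analytic part, Lemma~\ref{Qlem7} gives $\sum_{n\ge1}|b_n|r^n\le k\sum_{n\ge1}|a_n|r^n\le 2\lambda k r/(1-r)$ for $r\le 1/3$, while $|g'|\le k|h'|$ yields $S_r(g)\le k^2 S_r(h)$, i.e. $\sum_{n\ge1}n|b_n|^2r^{2n}\le k^2\sum_{n\ge1}n|a_n|^2r^{2n}$ for all $r\in(0,1)$; dividing by $r$ and integrating from $0$ then gives $\sum_{n\ge1}|b_n|^2r^{2n}\le k^2\sum_{n\ge1}|a_n|^2r^{2n}\le 4\lambda^2 k^2 r^2/(1-r^2)$.

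Adding the four estimates, for $r<1/(3+2k)$ one obtains
\[
S_1(r)\le\frac{2\lambda(1+k)r}{1-r}+\left(\frac{1}{2-\lambda}+\frac{r}{1-r}\right)\frac{4\lambda^2(1+k^2)r^2}{1-r^2},
\]
so it suffices to show the right-hand side is $\le\lambda$. Dividing by $\lambda>0$, using $\frac{1}{2-\lambda}+\frac{r}{1-r}=\frac{1+r-\lambda r}{(2-\lambda)(1-r)}$, and clearing the positive denominator $(2-\lambda)(1-r)^2(1+r)$ (legitimate since $0<\lambda\le1$ and $0<r<1$), this is equivalent to $G_1(\lambda,r)\ge0$, where
\[
G_1(\lambda,r)=(2-\lambda)\bigl(1-(3+2k)r\bigr)(1-r^2)-4\lambda(1+k^2)r^2(1+r-\lambda r).
\]
Expanding and collecting powers of $\lambda$ recovers precisely the polynomial $G_1$ of the statement; in particular $G_1(1,r)=p(r):=(3+2k)r^3-(5+4k^2)r^2-(3+2k)r+1$, which is the polynomial whose root is $r_0$.

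Next I would treat $G_1(\cdot,r)$ as a quadratic in $\lambda$ with positive leading coefficient $4(k^2+1)r^3$. Since $\partial_\lambda^2 G_1=8(k^2+1)r^3>0$, to see that $G_1(\cdot,r)$ is strictly decreasing on $[0,1]$ it is enough to check $\partial_\lambda G_1(1,r)<0$; a direct computation gives the factorization $\partial_\lambda G_1(1,r)=(r-1)\bigl((4k^2-2k+1)r^2-2(1+k)r+1\bigr)$, and the quadratic factor is positive on $(0,1/(3+2k)]$ — it equals $1$ at $r=0$, equals $(4k^2+4)/(3+2k)^2>0$ at $r=1/(3+2k)$, and its vertex lies to the right of $1/(3+2k)$ — so $\partial_\lambda G_1(1,r)<0$ there. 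Likewise $p(0)=1$, $p(1/(3+2k))=-(4+4k^2)/(3+2k)^2<0$, and $p'$ is negative on $(0,1/(3+2k)]$ (it is an upward parabola with $p'(0)=-(3+2k)<0$ whose positive root exceeds $1/(3+2k)$), so $p$ is strictly decreasing there and has a unique zero $r_0\in(0,1/(3+2k))=(0,(K+1)/(5K+1))$.

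Finally, if $r\le r_0$ then for every $\lambda\in(0,1]$ we get $G_1(\lambda,r)\ge G_1(1,r)=p(r)\ge p(r_0)=0$, which yields $S_1(r)\le\lambda$; and if $r\in(r_0,(K+1)/(5K+1))$ then $G_1(0,r)=2\bigl(1-(3+2k)r\bigr)(1-r^2)>0$ while $G_1(1,r)=p(r)<0$, so by strict monotonicity in $\lambda$ there is a unique $\lambda_1(r)\in(0,1)$ with $G_1(\lambda_1(r),r)=0$, and $S_1(r)\le\lambda$ holds exactly for $\lambda\le\lambda_1(r)$. I expect the main obstacle to be the monotonicity bookkeeping in the previous paragraph — verifying $\partial_\lambda G_1(1,r)<0$ and $p'<0$ on $(0,1/(3+2k))$ — since that is where the precise numerology of the coefficients $3+2k$, $5+4k^2$, $7+2k+4k^2$ genuinely enters; everything else is a routine assembly of the standard subordination and quasiconformal inequalities.
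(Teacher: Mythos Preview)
Your proof is correct and follows essentially the same strategy as the paper: bound each of the four sums using Lemma~\ref{Qlem2} and the dilatation inequality $|g'|\le k|h'|$, reduce $S_1(r)\le\lambda$ to $G_1(\lambda,r)\ge0$, and then exploit the monotonicity of $G_1$ in $\lambda$ together with $G_1(1,r)=p(r)$ and $G_1(0,r)=2(1-(3+2k)r)(1-r^2)$. The only cosmetic differences are that you bound $\sum_{n\ge1}|b_n|r^n$ via Lemma~\ref{Qlem7} whereas the paper uses Lemma~\ref{lem3} combined with Cauchy--Schwarz (both yield $2k\lambda r/(1-r)$ on $r\le 1/3$), and you verify $\partial_\lambda G_1(1,r)<0$ through a vertex argument on the quadratic factor $(4k^2-2k+1)r^2-2(1+k)r+1$ rather than the paper's explicit computation of its roots $r_1(k),r_2(k)$.
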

\begin{proof} As $h\prec \phi$ and that the function $\phi$ is univalent and convex in $\Bbb{D}$, in view of \textrm{Lemma \ref{Qlem2}}, we have 
$|a_n|\leq  |\phi'(0)|\leq  2\lambda$ for $n\geq 1$, where $\lambda=d\left(\phi(0), \pa \phi(\mathbb{D})\right)$. Therefore,
\beas\sum_{n=1}^\infty |a_n| r^n\leq 2\lambda\sum_{n=1}^\infty r^n=2 \lambda\frac{r}{1-r}\quad\text{and}
\quad\sum_{n=1}^\infty |a_n|^2 r^{2n}\leq 4\lambda^2\sum_{n=1}^\infty r^{2n}=4\lambda^2\frac{r^2}{1-r^2}.\eeas 
Since $f$ is a locally univalent and $K$-quasiconformal sense-preserving harmonic mapping on $\mathbb{D}$, Schwarz's Lemma gives that the dilatation $\omega=g'/h'$ is analytic 
in $\mathbb{D}$ and $|\omega(z)|=|g'(z)/h'(z)|\leq k$ in $\mathbb{D}$, where $K = (1+k)/(1-k) \geq 1$, $k\in[0,1)$.
In view of \textrm{Lemma \ref{lem3}}, we have
\bea\label{r4}&& \sum_{n=1}^\infty |b_n|^2 r^n\leq k^2 \sum_{n=1}^\infty |a_n|^2 r^n\leq 4k^2\lambda^2\frac{r}{1-r}\\[2mm]\text{and}
&&\label{g3}\sum_{n=1}^\infty n|b_n|^2 r^{2n}\leq k^2\sum_{n=1}^\infty n|a_n|^2 r^{2n},\eea
where $K = (1+k)/(1-k) \geq 1$, $k\in[0,1)$. 
Using (\ref{r4}) and the Cauchy-Schwarz inequality, we have 
\bea\label{r5} \sum_{n=1}^\infty |b_n| r^n\leq \left(\sum_{n=1}^\infty |b_n|^2 r^n\right)^{1/2}\left(\sum_{n=1}^\infty r^n\right)^{1/2} \leq 2 k\lambda\frac{r}{1-r}.\eea
By integrating the inequality (\ref{g3}) with respect to $r^2$ after dividing by $r^2$, we get
\beas \sum_{n=1}^\infty |b_n|^2 r^{2n}\leq k^2\sum_{n=1}^\infty |a_n|^2 r^{2n}\leq 4\lambda^2 k^2 \frac{r^2}{1-r^2}.\eeas
Therefore,
\beas S_1:&=&\sum_{n=1}^\infty |a_n| r^n+\sum_{n=1}^\infty |b_n|r^n+\left(\frac{1}{2-\lambda}+\frac{r}{1-r}\right)\left(\sum_{n=1}^\infty |a_n|^2r^{2n}+\sum_{n=1}^\infty |b_n|^2r^{2n}\right)\\[2mm]
&\leq& 2(k+1)\lambda\frac{r}{1-r}+\frac{4\lambda^2r^2(1+r-\lambda r)}{(2-\lambda)(1-r)(1-r^2)}(k^2+1)\\[2mm]
&=&\lambda+\lambda\left(\frac{(2k+3)r-1}{1-r}+\frac{4\lambda r^2(1+r(1- \lambda))(k^2+1)}{(2-\lambda)(1-r)(1-r^2)}\right)\\
&=&\lambda-\lambda\frac{G_1(\lambda,r)}{(2-\lambda)(1-r)(1-r^2)},\eeas
where 
\beas G_1(\lambda,r)&=&4\left(k^2+1\right)r^3\lambda^2-\left(\left(7+2k+4k^2\right)r^3+\left(3+4k^2\right)r^2-(3+2k)r+1\right)\lambda\\
&&+(6+4k)r^3-2r^2-(6+4k)r+2.\eeas
It is evident that $S_1>\lambda$, {\it i.e.,} $G_1(\lambda,r)<0$ for $r>1/(2k+3)$ and for each $\lambda\in(0,1]$.
Differentiating partially $G_1(\lambda,r)$ twice with respect to $\lambda$, we obtain
\beas&& \frac{\pa}{\pa \lambda}G_1(\lambda,r)=8\left(k^2+1\right)r^3\lambda-\left(\left(7+2k+4k^2\right)r^3+\left(3+4k^2\right)r^2-(3+2k)r+1\right),\\[2mm]
&& \frac{\pa^2}{\pa \lambda^2}G_1(\lambda,r)=8\left(k^2+1\right)r^3\geq 0\quad\text{for every}\quad \lambda\in(0,1].\eeas
Therefore, $\frac{\pa}{\pa \lambda}G_1(\lambda,r)$ is a monotonically increasing function of $\lambda\in(0,1]$ and it follows that
\beas  \frac{\pa}{\pa \lambda}G_1(\lambda,r)\leq \frac{\pa}{\pa \lambda}G_1(1,r)=(r-1)\left(r-r_1(k)\right)\left(r-r_2(k)\right),\eeas
where
\beas r_1(k)=\frac{1+k-\sqrt{4k-3k^2}}{4 k^2-2k+1} \quad\text{and}\quad r_2(k)=\frac{1+k+\sqrt{4k-3k^2}}{4 k^2-2k+1}.\eeas 
It is evident that $r_1(k)\in(0,1)$ and $r_2(k)\geq 1$ for $k\in[0,1)$. Furthermore, it is easy to verify that $r_1(k)>1/(2k+3)$ for $k\in[0,1)$, as illustrated in Figure \ref{fig1}.
\begin{figure}[H]
\centering
\includegraphics[scale=0.6]{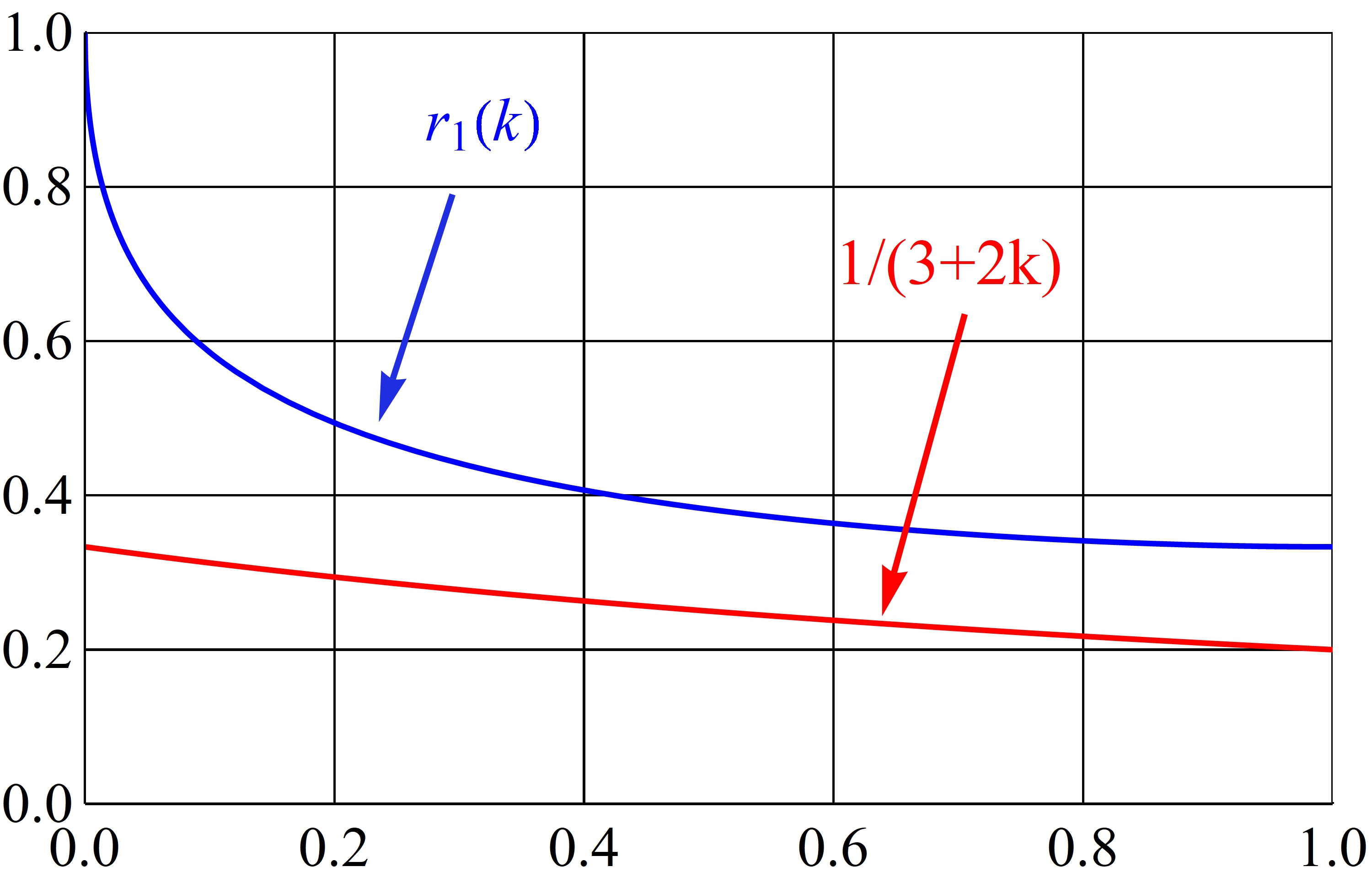}
\caption{The graphs of $r_1(k)$ and $1/(3+2k)$, when $k\in[0,1)$}
\label{fig1}
\end{figure}
\noindent Therefore, we have $\frac{\pa}{\pa \lambda}G_1(\lambda,r)\leq 0$ for $r\leq r_1(k)$.
Therefore, $G_1(\lambda,r)$ is a monotonically decreasing function of $\lambda$ for $r\leq r_1(k)$ and hence, we have
\beas G_1(\lambda,r)\geq G_1(1,r)=(3+2k)r^3-(5+4k^2)r^2-(3+2k)r+1\geq 0\quad\text{for}\quad r\leq r_0,\eeas
where $r_0\in(0,1/(3+2k))$ is the unique positive root of the equation 
\beas (3+2k)r^3-(5+4k^2)r^2-(3+2k)r+1=0,\eeas
where $k=(K-1)/(K+1)$.\\[2mm]
\indent Furthermore, $G_1(0,r)=2\left((3+2k)r-1\right)\left(r^2-1\right)$ and thus, we have $G_1(0,r)\geq 0$ for $r\leq 1/(3+2k)$ and $G_1(0,r)<0$ for $r>1/(3+2k)$. Again,
\beas G_1'(1,r)=2r\left(3(r-1)+2(kr-1)-4k^2\right)+(3+2k)\left(r^2-1\right)<0,\eeas
which shows that $G_1(1,r)\geq 0$ for $r\leq r_0$ and $G_5(1,r)<0$ for $r>r_0$. Since $G_1(\lambda,r)$ is a monotonically decreasing function of $\lambda$ for $r\leq r_1(k)$ 
and for any $r\in(r_0,1/(3+2k))$, $G_1(0,r)\geq 0$, $G_1(1,r)<0$, there is a uniquely defined $\lambda_1(r)\in(0,1)$ such that $G_1(\lambda_1(r),r)=0$, {\it i.e.,} $G_1(\lambda,r)\geq 0$ for $\lambda\leq \lambda_1(r)$. This completes the proof.
\end{proof}
\begin{rem}Setting $K=1$ in \textrm{Theorem \ref{T5}} gives \textrm{Thoerem F}.\end{rem}
The following result is the refined version of Bohr's inequality of \textrm{Theorem G} for harmonic mappings associated with subordination. 
\begin{theo}\label{T6} Suppose that $f(z)=h(z)+\ol{g(z)}=\sum_{n=0}^\infty a_n z^n+\ol{\sum_{n=1}^\infty b_n z^n}$ is a sense-preserving $K$-quasiconformal harmonic mapping in $\mathbb{D}$ and $h\prec \phi$, where $\phi$ is analytic and univalent in $\mathbb{D}$. Then,
\beas \sum_{n=1}^\infty |a_n| r^n+\sum_{n=1}^\infty |b_n|r^n+\left(\frac{1}{2-\lambda}+\frac{r}{1-r}\right)\left(\sum_{n=1}^\infty |a_n|^2r^{2n}+\sum_{n=1}^\infty |b_n|^2r^{2n}\right)\leq\lambda\eeas
for $r\leq r_0$, where $\lambda=d\left(\phi(0), \pa\phi(\mathbb{D}\right))\leq 1$ and $r_0\in\left(0, 3+2 k-2\sqrt{2+3k+k^2}\right)$ is the unique positive root of the equation 
\beas 1-5r-4kr-21r^2-4kr^2-16k^2r^2+r^3-16 r^4-16 k^2 r^4=0,\eeas
where $k=(K-1)/(K+1)$.\end{theo}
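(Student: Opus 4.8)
\emph{Strategy.} The argument runs parallel to the proof of Theorem \ref{T5}, with the convexity of $\phi$ (and Lemma \ref{Qlem2}) replaced by mere univalence of $\phi$ (and Lemma \ref{Qlem1}); write $S_2(r)$ for the left-hand side of the asserted inequality and $\lambda=d\left(\phi(0),\pa\phi(\mathbb{D})\right)\le1$. Since $h\prec\phi$ with $\phi$ univalent, Lemma \ref{Qlem1} gives $|a_n|\le n|\phi'(0)|\le 4n\lambda$ for $n\ge1$, so summing the elementary series $\sum nr^n$ and $\sum n^2r^n$,
\[
\sum_{n\ge1}|a_n|r^n\le\frac{4\lambda r}{(1-r)^2},\qquad \sum_{n\ge1}|a_n|^2r^{2n}\le 16\lambda^2\sum_{n\ge1}n^2r^{2n}=\frac{16\lambda^2 r^2(1+r^2)}{(1-r^2)^3}.
\]
Exactly as in the proof of Theorem \ref{T5}, the dilatation $\omega=g'/h'$ is analytic with $|\omega(z)|\le k:=(K-1)/(K+1)<1$, i.e. $|g'(z)|\le k|h'(z)|$ in $\mathbb{D}$; hence Lemma \ref{Qlem7} gives $\sum_{n\ge1}|b_n|r^n\le k\sum_{n\ge1}|a_n|r^n$ for $r\le1/3$, while the first inequality of Lemma \ref{lem3} (which for this conclusion only needs $|g'|\le k|h'|$) gives $\sum_{n\ge1}n|b_n|^2r^{2n}\le k^2\sum_{n\ge1}n|a_n|^2r^{2n}$, and dividing by $r^2$ and integrating with respect to $r^2$ (as in Theorem \ref{T5}) yields $\sum_{n\ge1}|b_n|^2r^{2n}\le k^2\sum_{n\ge1}|a_n|^2r^{2n}$. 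Combining the four estimates,
\[
\sum_{n\ge1}\left(|a_n|+|b_n|\right)r^n\le\frac{4(1+k)\lambda r}{(1-r)^2},\qquad \sum_{n\ge1}\left(|a_n|^2+|b_n|^2\right)r^{2n}\le\frac{16(1+k^2)\lambda^2 r^2(1+r^2)}{(1-r^2)^3};
\]
note that the right endpoint $(3+2k)-2\sqrt{2+3k+k^2}$ in the statement is exactly the positive root of $1-(6+4k)r+r^2=0$, equivalently of $(1-r)^2-4(1+k)r=0$, the radius beyond which the first estimate alone no longer forces $S_2(r)\le\lambda$.

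Substituting the two bounds into $S_2(r)$ and clearing the (positive) denominator, the inequality $S_2(r)\le\lambda$ is equivalent to $G(\lambda,r)\ge0$ for an explicit polynomial $G$, quadratic in $\lambda$ with positive leading coefficient. I would then copy the $\lambda$-monotonicity scheme of Theorem \ref{T5}: $\pa_\lambda^2 G\ge0$, so $\pa_\lambda G(\lambda,r)\le\pa_\lambda G(1,r)$; the latter factors so as to be $\le0$ for $r$ below a threshold strictly larger than $r_0$; hence on that range $G(\cdot,r)$ is decreasing on $(0,1]$ and $G(\lambda,r)\ge G(1,r)$. Finally $G(1,r)$ is, up to a positive constant, the polynomial governing $r_0$: it equals $1$ at $r=0$ and has a unique zero $r_0\in\left(0,(3+2k)-2\sqrt{2+3k+k^2}\right)$, so $G(1,r)\ge0$, hence $S_2(r)\le\lambda$, for $r\le r_0$. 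Setting $K=1$ annihilates the $b_n$-terms and should recover Theorem G; for $K>1$ the $k$-dependent corrections enter only through the two displayed bounds.

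The lemma-based estimates are immediate; the real work, and the main obstacle, is the real-variable bookkeeping in the second paragraph — arranging $G(\lambda,r)$ so the monotonicity-in-$\lambda$ argument runs cleanly, factoring $\pa_\lambda G(1,r)$ to pin down the threshold, and checking that the resulting quartic $G(1,r)$ has its relevant root inside $\left(0,(3+2k)-2\sqrt{2+3k+k^2}\right)$ and agrees with the displayed equation (a cruder majorant for $\sum|a_n|^2r^{2n}$ than $16\lambda^2\sum n^2r^{2n}$ may be what produces exactly the stated quartic, at the cost of a slightly smaller $r_0$). Should one also want the companion ``moreover'' clause — that for $r\in\left(r_0,(3+2k)-2\sqrt{2+3k+k^2}\right)$ the inequality holds for all $\lambda$ up to some $\lambda_1(r)\in(0,1)$ — it drops out of the same $G$ by solving $G(\lambda,r)=0$ for $\lambda$, exactly as in Theorem \ref{T5}; and sharpness, if desired, is witnessed by $\phi(z)=z/(1-z)^2$ (so $\lambda=1/4$) with the constant dilatation $\omega\equiv k$, i.e. $g=k\phi$, $b_n=ka_n$, for which $f=\phi+\overline{k\phi}$ is sense-preserving $K$-quasiconformal and every estimate above holds with equality.
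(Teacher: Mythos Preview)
Your strategy is exactly the paper's: the same coefficient bounds $|a_n|\le 4n\lambda$ from Lemma \ref{Qlem1}, the same use of Lemmas \ref{lem3} and \ref{Qlem7} for the $b_n$-sums, and the same reduction via $\lambda$-monotonicity of a quadratic-in-$\lambda$ polynomial $G_2(\lambda,r)$ to the inequality $G_2(1,r)\ge 0$ on $\bigl(0,\,3+2k-2\sqrt{2+3k+k^2}\bigr)$. Your suspicion about the quartic is well-founded: the paper's own computation produces a degree-$7$ polynomial $G_2(1,r)$ (which at $K=1$ reduces to the equation in Theorem G), not the quartic displayed in the statement, so the mismatch you flagged lies in the stated equation rather than in your argument.
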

\begin{proof} Given that $h\prec \phi$ and that the function $\phi$ is analytic and univalent in $\Bbb{D}$, in view of \textrm{Lemma \ref{Qlem2}}, we have 
$|a_n|\leq n |\phi'(0)|\leq  4n\lambda$ for $n\geq 1$, where $\lambda=d\left(\phi(0), \pa \phi(\mathbb{D})\right)$. Therefore, 
\beas&&\sum_{n=1}^\infty |a_n| r^n\leq 4 \lambda\sum_{n=1}^\infty  nr^n=4 \lambda\frac{r}{(1-r)^2}\\[2mm]\text{and}
&&\sum_{n=1}^\infty |a_n|^2 r^{2n}\leq 16\lambda^2\sum_{n=1}^\infty n^2 r^{2n}=16\lambda^2\frac{r^2(1+r^2)}{(1-r^2)^3}.\eeas 
Using similar argument as in the proof of \textrm{Theorem \ref{T5}}, and in view of \textrm{Lemmas \ref{lem3}} and \ref{Qlem7}, we have the inequality (\ref{g3}) and
\beas\sum_{n=1}^\infty |b_n| r^n\leq k\sum_{n=1}^\infty |a_n| r^n \leq 4 k\lambda\frac{r}{(1-r)^2}\quad\text{for}\quad r\leq 1/3\eeas
where $K = (1+k)/(1-k) \geq 1$, $k\in[0,1)$. From (\ref{g3}), we have
\beas \sum_{n=1}^\infty |b_n|^2 r^{2n}\leq k^2\sum_{n=1}^\infty |a_n|^2 r^{2n}\leq 16\lambda^2 k^2\sum_{n=1}^\infty n^2 r^{2n}=16k^2\lambda^2\frac{r^2(1+r^2)}{(1-r^2)^3}.\eeas
Therefore,
\beas S_2:&=&\sum_{n=1}^\infty |a_n| r^n+\sum_{n=1}^\infty |b_n|r^n+\left(\frac{1}{2-\lambda}+\frac{r}{1-r}\right)\left(\sum_{n=1}^\infty |a_n|^2r^{2n}+\sum_{n=1}^\infty |b_n|^2r^{2n}\right)\\
&\leq& 4(k+1)\lambda\frac{r}{(1-r)^2}+\frac{16\lambda^2r^2(1+r^2)(1+r-\lambda r)}{(2-\lambda)(1-r)(1-r^2)^3}(k^2+1)\\[2mm]
&=&\lambda+\lambda\left(\frac{4(k+1)r-(1-r)^2}{(1-r)^2}+\frac{16\lambda r^2(1+r^2)(1+r(1- \lambda))(k^2+1)}{(2-\lambda)(1-r)(1-r^2)^3}\right)\\[2mm]
&=&\lambda+\lambda\left(\frac{(4k+6)r-1-r^2}{(1-r)^2}+\frac{16\lambda r^2(1+r^2)(1+r(1- \lambda))(k^2+1)}{(2-\lambda)(1-r)(1-r^2)^3}\right)\\[2mm]
&=&\lambda-\lambda\frac{G_2(\lambda,r)}{(2-\lambda)(1-r)(1-r^2)^3},\eeas
where 
\beas &&G_2(\lambda,r)=16(k^2+1)(r^2+1)r^3\lambda^2-R_1(r)\lambda+R_2(r)\eeas
such that 
\beas R_1(r)&=&r^7-4kr^6-5r^6+16k^2r^5-4kr^5+9r^5+16k^2r^4+8kr^4+27r^4+8kr^3\\
&&+16k^2r^3+27r^3-4kr^2+16k^2r^2+9 r^2 - 4kr-5r+1,\\[2mm]
R_2(r)&=&2-10r-8kr-14r^2-8kr^2+22r^3+16kr^3+22r^4+16kr^4-14r^5\\
&&-8kr^5-10r^6-8kr^6+2r^7.\eeas
It is evident that 
\beas (4k+6)r-1-r^2=\left(r_1(k)-r\right)\left(r-r_2(k)\right),\eeas
where $r_1(k)=3+2k-2\sqrt{2+3 k+k^2}\in(0,1/3)$ and $r_2(k)=3+2k+ 2\sqrt{2+3 k+k^2}>1$ for $k\in[0,1)$.
Therefore, $S_2>\lambda$, {\it i.e.,} $G_2(\lambda,r)<0$ for $r>r_1(k)$ and for each $\lambda\in(0,1]$.
Differentiating partially $G_2(\lambda,r)$ twice with respect to $\lambda$, we obtain 
\beas \frac{\pa}{\pa \lambda}G_2(\lambda,r)&=&32(k^2+1)(r^2+1)r^3\lambda-R_1(r)\\[2mm]\text{and}\quad
\frac{\pa^2}{\pa \lambda^2}G_2(\lambda,r)&=&32(k^2+1)(r^2+1)r^3\geq 0\quad\text{for every}\quad \lambda\in(0,1].\eeas
Therefore, $\frac{\pa}{\pa \lambda}G_2(\lambda,r)$ is a monotonically increasing function of $\lambda\in(0,1]$ and it follows that
\beas  \frac{\pa}{\pa \lambda}G_2(\lambda,r)&\leq& \frac{\pa}{\pa \lambda}G_2(1,r)\\[2mm]
&=&-r^7+(4k+6)r^6+(16k^2+4k+23)r^5-(16k^2+8k+27)r^4\\
&&+(16k^2-8k+5)r^3+(-16k^2+4k-9)r^2+(4k+5)r-1\\[2mm]
&\leq &(4k+6)r_1^6(k)+(16k^2+4k+23)r_1^5(k)+(16k^2-8k+5)r_1^3(k)\\
&&+(4k+5)r_1(k)-1\eeas
for $0<r\leq r_1(k)=3+2k-2\sqrt{2+3 k+k^2}$. A tedious long calculation shows that
\beas (4k+6)r_1^6(k)+(16k^2+4k+23)r_1^5(k)+(16k^2-8k+5)r_1^3(k)+(4k+5)r_1(k)-1\\
=2\left(R_3(k)-R_4(k)\sqrt{2+3k+k^2}\right),\hspace{8cm}\eeas
where 
\beas R_3(k)&=&8192 k^7+74752 k^6+296704 k^5+672704 k^4+951408 k^3+843432 k^2\\&&+432283 k+97732,\\[2mm]
R_4(k)&=&8192k^6+62464 k^5+204032 k^4+372928 k^3+408176 k^2+253840 k+69107.\eeas
Further, a tedious long calculation shows that the inequality $R_3(k)-R_4(k)\sqrt{2+3k+k^2}\\\leq 0$ is equivalent to
\beas 6656 k^5+33600 k^4+72272 k^3+84720 k^2+51555 k+ 11074\geq 0,\eeas
which is true for $0\leq k<1$ and it is shown in Figure \ref{fig3}. Therefore, $\frac{\pa}{\pa \lambda}G_2(\lambda,r)\leq 0$ for $0<r\leq r_1(k)=3+2k-2\sqrt{2+3 k+k^2}$.
\begin{figure}[H]
\centering
\includegraphics[scale=0.6]{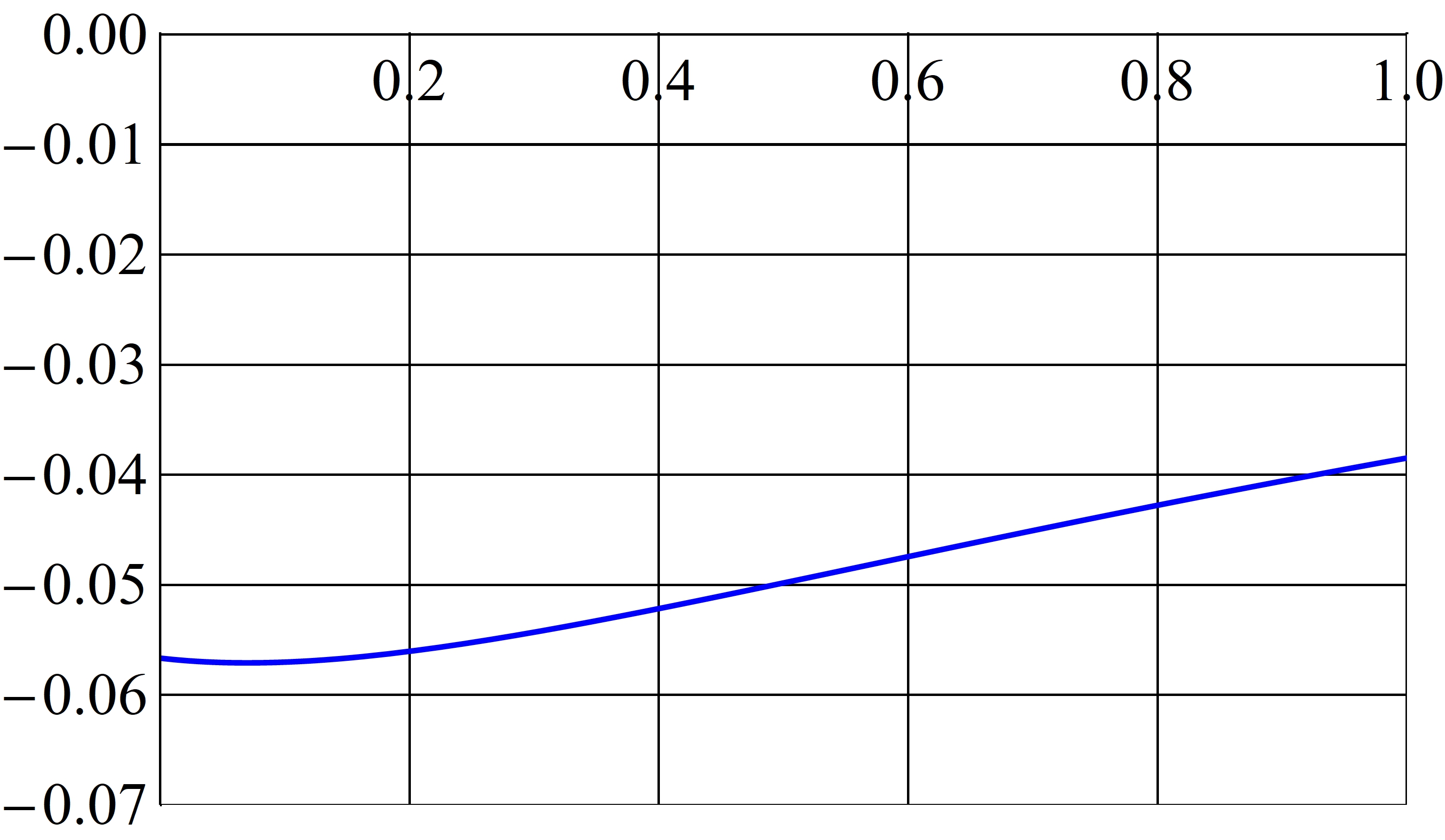}
\caption{The graph $R_3(k)-R_4(k)\sqrt{2+3k+k^2}$ for $k\in[0,1)$}
\label{fig3}
\end{figure}
\noindent Therefore, $G_2(\lambda,r)$ is a monotonically decreasing function of $\lambda$ for $r\leq r_1(k)$ and it follows that 
\beas G_2(\lambda,r)&\geq& G_2(1,r)\\
&=&r^7-4 k r^6-5 r^6-4 k r^5-7 r^5+16 k^2 r^4+27 r^4+8 k r^4+11 r^3+8 k r^3\\
&&+9 r^2-4 k r^2+16 k^2 r^2-5 r- 4 k r +1\geq 0\eeas
for $r\leq r_0$, where $r_0\in(0,r_1(k))$ is the unique positive root of the equation 
\beas &&r^7-4 k r^6-5 r^6-4 k r^5-7 r^5+16 k^2 r^4+27 r^4+8 k r^4+11 r^3+8 k r^3+9 r^2\\
&&-4 k r^2+16 k^2 r^2-5 r- 4 k r +1=0,\eeas
where $k=(K-1)/(K+1)$. Indeed,
\beas G_2'(1,r)&=&-k^2(32r+64r^3)+k(- 24 r^5- 20 r^4+12 r^3- 8 r - 4)\\
&&+(7 r^6- 30 r^5- 35 r^4+ 33 r^2- 22 r-5)-20r^3(1-k)-24r(1-kr).\eeas
It is evident that $- 24 r^5- 20 r^4+12 r^3- 8 r - 4<0$ for $r\in[0,1)$ and by employing Sturm's theorem, it is a straightforward process to ascertain that the equation 
$7 r^6- 30 r^5- 35 r^4+ 33 r^2- 22 r-5=0$ has no roots in $(0,1)$, as illustrated in Figure \ref{fig4}. 
\begin{figure}[H]
\centering
\includegraphics[scale=0.6]{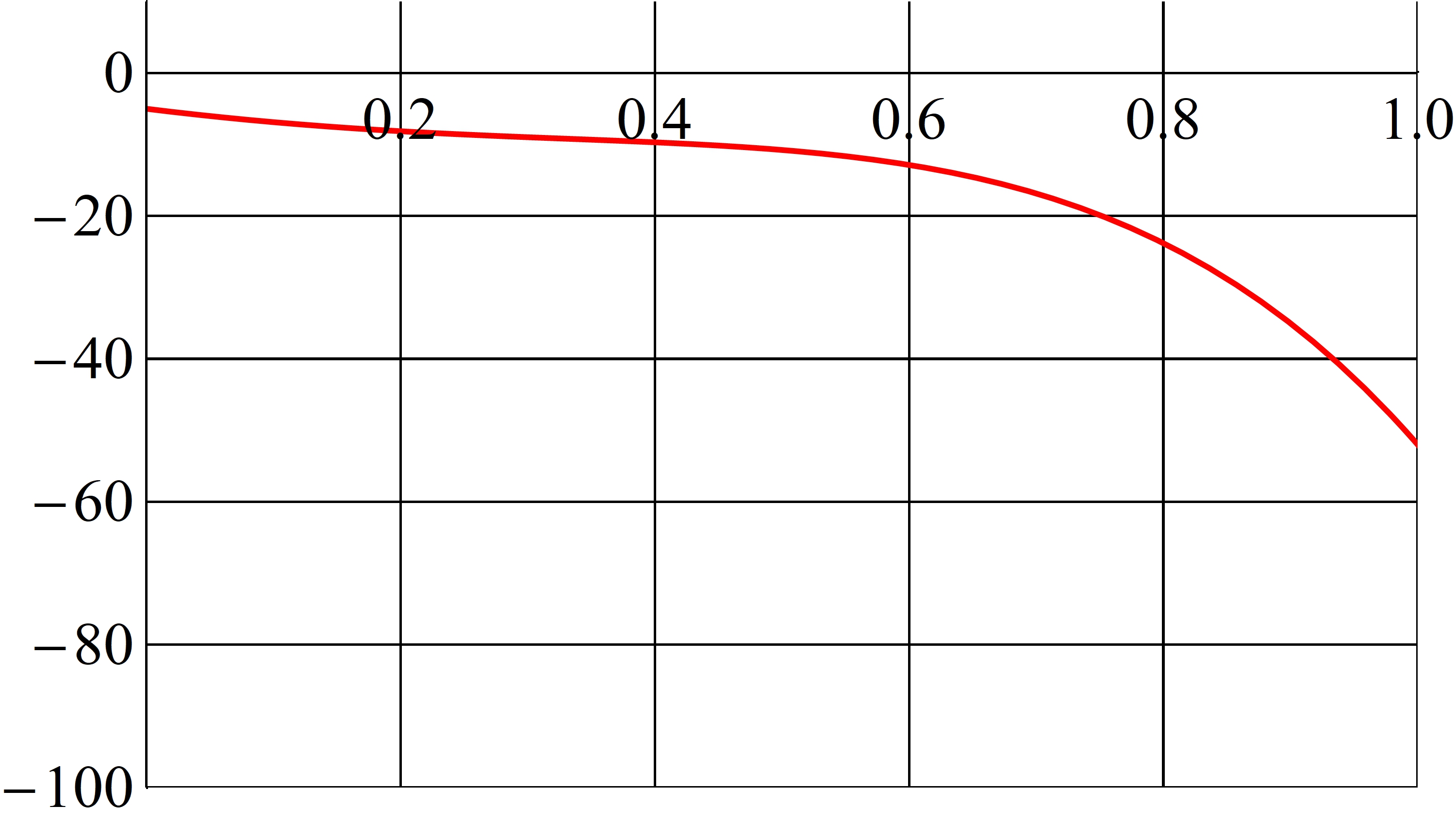}
\caption{The graph $7 r^6- 30 r^5- 35 r^4+ 33 r^2- 22 r-5$ for $r\in(0,1)$}
\label{fig4}
\end{figure}
Therefore, $G_2'(1,r)\leq 0$ for $r\in[0,1)$, {\it i.e.,} $G_2(1,r)$ is a monotonically decreasing function of $r$, 
which shows that $G_2(1,r)\geq 0$ for $r\leq r_0$ and $G_2(1,r)<0$ for $r>r_0$. This completes the proof.
\end{proof}
\begin{rem}Setting $K=1$ in \textrm{Theorem \ref{T6}} gives \textrm{Thoerem G}.\end{rem}
\section{\bf Improved version of Bohr inequalities}
In the following, we obtain the sharp improved version of Bohr inequality of \textrm{Theorem E} for harmonic mapping by making use of the non-negative quantity $S_r(h)$. 
\begin{theo}\label{T2} Suppose that $f(z)=h(z)+\ol{g(z)}=\sum_{n=0}^\infty a_n z^n+\ol{\sum_{n=1}^\infty b_n z^n}$ is a sense-preserving $K$-quasiconformal harmonic mapping in $\mathbb{D}$ and $h\prec \phi$, where $\phi$ is analytic and univalent in $\mathbb{D}$. Then
\beas\sum_{n=1}^\infty \left(|a_n|+ |b_n|\right) r^n+\mu\left(\frac{S_r(h)}{\pi}\right)^{1/2}\leq d\left(\phi(0), \pa\phi(\mathbb{D})\right)\quad\text{for}\quad r\leq r_0,\eeas
where $\mu\geq0$ and $r_0\in(0,1/3)$ is the unique positive root of the equation 
\beas (1-r^2)^2-\left(\frac{8K}{K+1}\right)r(1+r)^2-4\mu r\sqrt{r^4+4r^2+1}=0.\eeas
The number $r_0$ is the best possible.
\end{theo}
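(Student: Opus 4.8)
The plan is to bound each of the three pieces of the left-hand side separately in terms of $r$ and $\lambda:=d(\phi(0),\pa\phi(\mathbb{D}))$, then combine them over the common denominator $(1-r^2)^2$ and read off the radius from the resulting inequality; sharpness will come from a scaled Koebe map together with a proportional co-analytic part. Set $k=(K-1)/(K+1)\in[0,1)$, so that $4(1+k)=8K/(K+1)$. Since $h\prec\phi$ with $\phi$ univalent, \textrm{Lemma \ref{Qlem1}} gives $|a_n|\le n|\phi'(0)|\le 4n\lambda$ for $n\ge1$, hence $\sum_{n=1}^\infty|a_n|r^n\le 4\lambda\sum_{n=1}^\infty nr^n=4\lambda r/(1-r)^2$. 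As $f$ is a sense-preserving $K$-quasiconformal harmonic mapping, its dilatation $\omega_f=g'/h'$ satisfies $|g'(z)|\le k|h'(z)|$ in $\mathbb{D}$, and \textrm{Lemma \ref{Qlem7}} then gives $\sum_{n=1}^\infty|b_n|r^n\le k\sum_{n=1}^\infty|a_n|r^n\le 4k\lambda r/(1-r)^2$ for $r\le 1/3$. For the area term, $S_r(h)/\pi=\sum_{n=1}^\infty n|a_n|^2r^{2n}\le 16\lambda^2\sum_{n=1}^\infty n^3r^{2n}=16\lambda^2 r^2(1+4r^2+r^4)/(1-r^2)^4$ by the identity $\sum_{n\ge1}n^3x^n=x(1+4x+x^2)/(1-x)^4$ with $x=r^2$, so $\mu(S_r(h)/\pi)^{1/2}\le 4\mu\lambda r\sqrt{1+4r^2+r^4}/(1-r^2)^2$.

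Adding the three estimates, for $r\le 1/3$ we obtain
\beas \sum_{n=1}^\infty(|a_n|+|b_n|)r^n+\mu\left(\frac{S_r(h)}{\pi}\right)^{1/2}\le \lambda-\frac{\lambda\,\Psi(r)}{(1-r^2)^2},\eeas
where $\Psi(r):=(1-r^2)^2-\frac{8K}{K+1}r(1+r)^2-4\mu r\sqrt{1+4r^2+r^4}$. On $(0,1)$ the term $(1-r^2)^2$ is strictly decreasing while $r(1+r)^2$ and $r\sqrt{1+4r^2+r^4}$ are strictly increasing, and $\mu\ge0$; hence $\Psi$ is strictly decreasing with $\Psi(0)=1>0$ and $\Psi(1)<0$, so $\Psi$ has a unique zero $r_0\in(0,1)$, which is the positive root of the stated equation. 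Evaluating at $r=1/3$ and using $K/(K+1)\ge1/2$ shows $\Psi(1/3)<0$, whence $r_0<1/3$; therefore all of the above estimates hold on $[0,r_0]$ and $\Psi(r)\ge0$ there, which proves the inequality for $r\le r_0$.

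For sharpness take $\phi(z)=4\lambda z/(1-z)^2$, which is univalent with $d(\phi(0),\pa\phi(\mathbb{D}))=\lambda$, and set $h=\phi$ and $g(z)=4k\lambda z/(1-z)^2$. Then $f=h+\ol{g}$ has constant dilatation $\omega_f\equiv k$, so it is a sense-preserving $K$-quasiconformal harmonic mapping with $h\prec\phi$. For this $f$ one has $|a_n|=4\lambda n$ and $|b_n|=4k\lambda n$, so at $z=r\in(0,1)$ every inequality used above becomes an equality and the left-hand side equals $\lambda-\lambda\Psi(r)/(1-r^2)^2$, which is $>\lambda$ for all $r>r_0$. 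Hence $r_0$ cannot be enlarged.

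The computations are otherwise routine; the points deserving genuine care are the closed form $\sum_{n\ge1}n^3x^n=x(1+4x+x^2)/(1-x)^4$ used for the area term, the monotonicity argument that identifies $r_0$ as the \emph{unique} positive root of $\Psi$, and the verification $r_0<1/3$, which is precisely what makes \textrm{Lemma \ref{Qlem7}} applicable on the whole interval $[0,r_0]$. I expect the interplay of the square-root term with the rational terms in the monotonicity step to be the most delicate part, but no real obstacle is anticipated.
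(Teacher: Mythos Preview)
Your proposal is correct and follows essentially the same approach as the paper: the same coefficient bounds from \textrm{Lemma \ref{Qlem1}}, the same application of \textrm{Lemma \ref{Qlem7}} for the co-analytic sum, the same closed form for $\sum n^3 r^{2n}$, and the same extremal built from the Koebe function with a proportional co-analytic part. The only cosmetic differences are that you argue monotonicity of $\Psi$ by decomposing it into monotone pieces rather than computing $\Psi'$ explicitly, and that you take a $4\lambda$-scaled Koebe function for sharpness instead of the unscaled one with $d=1/4$; neither changes the substance.
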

\begin{proof} 
Given that $h\prec\phi$ and that the function $\phi(z)$ is univalent and analytic in $\Bbb{D}$, the \textrm{Lemma \ref{Qlem1}} gives us
$|a_n|\leq  n|\phi'(0)|\leq  4n \;d\left(\phi(0), \pa \phi(\mathbb{D})\right)$ for $n\geq 1$. Therefore,  
\bea\label{r2}\sum_{n=1}^\infty |a_n| r^n\leq 4 \;d\left(\phi(0), \pa \phi(\mathbb{D})\right)\sum_{n=1}^\infty  nr^n=4 \;d\left(\phi(0), \pa \phi(\mathbb{D})\right)\frac{r}{(1-r)^2}.\eea
Since $f$ is a locally univalent and $K$-quasiconformal sense-preserving harmonic mapping on $\mathbb{D}$, using similar arguments as in the proof of \textrm{Theorem \ref{T5}} and in view of \textrm{Lemma \ref{Qlem7}}, we have
\bea\label{r3} \sum_{n=1}^\infty |b_n| r^n\leq k\sum_{n=1}^\infty |a_n|r^n\leq 4 k\; d\left(\phi(0), \pa \phi(\mathbb{D})\right)\frac{r}{(1-r)^2}\quad\text{for}\quad r\leq 1/3,\eea 
where $K = (1+k)/(1-k) \geq 1$, $k\in[0,1)$.
From the definition of $S_r(h)$, we have
\beas\frac{S_r(h)}{\pi}=\sum_{n=1}^\infty n|a_n|^2 r^{2n}&\leq&16\left(d\left(\phi(0), \pa \phi(\mathbb{D})\right)\right)^2\sum_{n=1}^\infty n^3 r^{2n}\nonumber\\
&=&16\left(d\left(\phi(0), \pa \phi(\mathbb{D})\right)\right)^2 \frac{r^2(r^4+4r^2+1)}{(1-r^2)^4}.\eeas
Therefore,
\beas &&\sum_{n=1}^\infty \left(|a_n|+|b_n|\right) r^n+\mu\left(\frac{S_r(h)}{\pi}\right)^{1/2}\\[2mm]
&\leq& \left( \frac{4(1+k)r}{(1-r)^2}+\frac{4\mu r\sqrt{r^4+4r^2+1}}{(1-r^2)^2} \right)d\left(\phi(0), \pa \phi(\mathbb{D})\right)\\[2mm]
&=&\frac{4(1+k)r(1+r)^2+4\mu r\sqrt{r^4+4r^2+1}}{(1-r^2)^2}d\left(\phi(0), \pa \phi(\mathbb{D})\right)\\[2mm]
&\leq& d\left(\phi(0), \pa \phi(\mathbb{D})\right)\quad \text{for}\quad r\leq r_0,\eeas
where $r_0$ is the positive root of the equation 
\bea\label{g4} G_3(r):=(1-r^2)^2-4(1+k)r(1+r)^2-4\mu r\sqrt{r^4+4r^2+1}=0,\eea
where $k=(K-1)/(K+1)$. It is easy to see that 
\beas G_3'(r)=\frac{-4\mu\left(3r^4+8r^2+1\right)}{\sqrt{r^4+4r^2+1}}-4(1+5r+3r^2-r^3+k(1+4r+3r^2))<0,\eeas
which shows that $G_3(r)$ is a strictly decreasing function of $r\in(0,1)$ with $G_3(0)=1$ and $\lim_{r\to (1/3)^{-}}G_3(r)=-4(32+3\sqrt{118}a+48k)/81<0$. Thus, the equation $G_3(r)=0$ has a unique positive root $r_0\in(0,1/3)$.\\[2mm] 
\indent To prove the sharpness of the result, we consider the function $f_1(z)=h_1(z)+\ol{g_1(z)}$ in $\mathbb{D}$ such that 
\beas \phi(z)=h_1(z)=\frac{z}{(1-z)^2}=\sum_{n=1}^\infty nz^n\quad\text{and}\quad g_1(z)=k\lambda \sum_{n=1}^\infty n z^n,\eeas
where $|\lambda|=1$ and $k=(K-1)/(K+1)$. It is evident that $d\left(\phi(0), \pa \phi(\mathbb{D})\right)=1/4$.
Thus,
\beas\sum_{n=1}^\infty \left(|a_n|+ |k\lambda a_n|\right)r^n+\mu\left(\frac{S_r(h_1)}{\pi}\right)^{1/2}
&=&\frac{(1+k)r}{(1-r)^2}+\mu\left(\sum_{n=1}^\infty n|a_n|^2 r^{2n}\right)^{1/2}\\[2mm]
&=&\frac{(1+k) r\left(1+r\right)^2+\mu r\sqrt{r^4+4r^2+1}}{(1-r^2)^2}> 1/4\eeas
for $r>r_0$, where $r_0$ is the unique positive root of the equation (\ref{g4}).											
This completes the proof.\end{proof}
\begin{rem}
Note that if we use \textrm{Lemma \ref{lem3}} instead of \textrm{Lemma \ref{Qlem7}} in the proof of \textrm{Theorem \ref{T2}}, then we have
\beas\sum_{n=1}^\infty |b_n|^2 r^n\leq k^2 \sum_{n=1}^\infty |a_n|^2 r^n&\leq &16k^2\left(d\left(\phi(0), \pa \phi(\mathbb{D})\right)\right)^2\sum_{n=1}^\infty n^2 r^n\nonumber\\&=&16k^2\left(d\left(\phi(0), \pa \phi(\mathbb{D})\right)\right)^2\frac{r(1+r)}{(1-r)^3},\eeas
 where $K = (1+k)/(1-k) \geq 1$, $k\in[0,1)$. 
By applying the Cauchy-Schwarz inequality, we have
\beas \sum_{n=1}^\infty |b_n| r^n\leq \left(\sum_{n=1}^\infty |b_n|^2 r^n\right)^{1/2}\left(\sum_{n=1}^\infty r^n\right)^{1/2} \leq 4 k\; d\left(\phi(0), \pa \phi(\mathbb{D})\right)\frac{r\sqrt{1+r}}{(1-r)^2}.\eeas 
Therefore,
\beas &&\sum_{n=1}^\infty \left(|a_n|+|b_n|\right) r^n+\mu\left(\frac{S_r(h)}{\pi}\right)^{1/2}\\[2mm]
&\leq&\frac{4(1+k\sqrt{r+1})r(1+r)^2+4\mu r\sqrt{r^4+4r^2+1}}{(1-r^2)^2}d\left(\phi(0), \pa \phi(\mathbb{D})\right)\\[2mm]
&\leq& d\left(\phi(0), \pa \phi(\mathbb{D})\right)\quad \text{for}\quad r\leq r_0,\eeas
where $r_0$ is the positive root of the equation 
\beas G_4(r):=(1-r^2)^2-4(1+k\sqrt{r+1})r(1+r)^2-4\mu r\sqrt{r^4+4r^2+1}=0,\eeas
where $k=(K-1)/(K+1)$. It is easy to see that 
\beas G'_4(r)=-\frac{2}{\sqrt{r^4+4r^2+1}}\left(\mu(2+16r^2+6r^4)+(2+10r+6r^2-2r^3)\sqrt{r^4+4r^2+1}\right.\\
\left.+k(2+9r+7r^2) \sqrt{(1+r)(r^4+4r^2+1)} \right)<0,\eeas 
which shows that $G_4(r)$ is a strictly decreasing function of $r\in(0,1)$ with 
$G_4(0)=1$ and $\lim_{r\to (1/3)^{-}}G_4(r)=-4(32 + 3\sqrt{118}a+32\sqrt{3}k)/81<0$. Thus, the equation 
$G_4(r)=0$ has a unique positive root $r_0\in(0,1/3)$. 
Since $r_0<1/3$, it is obvious that the application of \textrm{Lemma \ref{Qlem7}} in the proof of \textrm{Theorem \ref{T2}} is necessary to ensure the sharpness of the radius.
\end{rem}
The following result is the sharply improved version of the Bohr inequality of \textrm{Theorem D} for harmonic mapping by utilizing the non-negative quantity $S_r(h)$.
\begin{theo}\label{T1} Suppose that $f(z)=h(z)+\ol{g(z)}=\sum_{n=0}^\infty a_n z^n+\ol{\sum_{n=1}^\infty b_n z^n}$ is a sense-preserving $K$-quasiconformal harmonic mapping in $\mathbb{D}$ and $h\prec \phi$, where $\phi$ is univalent and convex in $\mathbb{D}$. Then
\beas\sum_{n=1}^\infty \left(|a_n|+ |b_n|\right) r^n+\mu\left(\frac{S_r(h)}{\pi}\right)^{1/2}\leq d\left(\phi(0), \pa\phi(\mathbb{D})\right)\quad\text{for}\quad r\leq r_0,\eeas
where $\mu\geq0$ and $r_0\in(0,1)$ is the unique positive root of the equation 
\beas \left(\frac{5K+1}{K+1}\right)r^2+2\left(\mu+\frac{2K}{K+1}\right)r-1=0.\eeas
The number $r_0$ is sharp.
\end{theo}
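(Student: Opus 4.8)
The plan is to follow the proof of Theorem~\ref{T2}, but to replace the coefficient estimates valid for subordination to an analytic univalent function by the sharper ones available when $\phi$ is convex. Put $\lambda:=d\left(\phi(0),\pa\phi(\mathbb{D})\right)$ and $k:=(K-1)/(K+1)\in[0,1)$. Since $h\prec\phi$ with $\phi$ convex and univalent, Lemma~\ref{Qlem2} gives the \emph{uniform} bound $|a_n|\le|\phi'(0)|\le 2\lambda$ for all $n\ge1$, whence
\[
\sum_{n=1}^\infty|a_n|r^n\le \frac{2\lambda r}{1-r},\qquad \frac{S_r(h)}{\pi}=\sum_{n=1}^\infty n|a_n|^2r^{2n}\le 4\lambda^2\sum_{n=1}^\infty nr^{2n}=\frac{4\lambda^2r^2}{(1-r^2)^2}.
\]
As in the proof of Theorem~\ref{T5}, since $f$ is sense-preserving and $K$-quasiconformal we have $|g'(z)|\le k|h'(z)|$ in $\mathbb{D}$, so Lemma~\ref{Qlem7} yields $\sum_{n\ge1}|b_n|r^n\le k\sum_{n\ge1}|a_n|r^n\le 2k\lambda r/(1-r)$ for $r\le1/3$.

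Next I would assemble these three estimates. Using $r/(1-r)=r(1+r)/(1-r^2)$ they combine to
\[
\sum_{n=1}^\infty\left(|a_n|+|b_n|\right)r^n+\mu\left(\frac{S_r(h)}{\pi}\right)^{1/2}\le \frac{\lambda}{1-r^2}\Bigl(2(1+k)r(1+r)+2\mu r\Bigr),
\]
so the claimed inequality holds as soon as $2(1+k)r(1+r)+2\mu r\le 1-r^2$, equivalently once
\[
G_5(r):=(2k+3)r^2+2(1+k+\mu)r-1\le 0.
\]
From $1+k=2K/(K+1)$ one reads off $2k+3=(5K+1)/(K+1)$ and $2(1+k+\mu)=2\bigl(\mu+2K/(K+1)\bigr)$, so $G_5(r)=0$ is precisely the equation in the statement. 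As $G_5$ is strictly increasing on $[0,\infty)$ with $G_5(0)=-1$, it has a unique positive root $r_0$, and $G_5\le0$ on $[0,r_0]$; moreover $G_5(1/3)=(8k+6\mu)/9\ge0$, so $r_0\le1/3$, which both guarantees $r_0\in(0,1)$ and ensures that the use of Lemma~\ref{Qlem7} is legitimate on the whole interval $[0,r_0]$.

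For sharpness I would use $\phi(z)=h_1(z)=z/(1-z)=\sum_{n\ge1}z^n$, which is convex univalent with $\phi(\mathbb{D})=\{w:\operatorname{Re} w>-1/2\}$ and $\lambda=1/2$, together with $g_1(z)=k\zeta\,z/(1-z)$ where $|\zeta|=1$; then $g_1'=k\zeta\,h_1'$, so $f_1=h_1+\ol{g_1}$ is a sense-preserving $K$-quasiconformal harmonic mapping with $|a_n|=1$ and $|b_n|=k$ for every $n$. For $f_1$ the left-hand side equals $(1+k)r/(1-r)+\mu r/(1-r^2)=\bigl(2(1+k)r(1+r)+2\mu r\bigr)/\bigl(2(1-r^2)\bigr)$, which exceeds $1/2=\lambda$ exactly when $G_5(r)>0$, i.e. when $r>r_0$; hence $r_0$ cannot be enlarged. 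I do not expect a genuine obstacle: everything collapses to a single quadratic, and the improved contribution $\mu(S_r(h)/\pi)^{1/2}$ enters only through the term $2\mu r/(1-r^2)$, which does not interact with the rest of the estimate in any delicate way. The one point to watch is the inequality $r_0\le1/3$, needed so that Lemma~\ref{Qlem7} applies throughout $[0,r_0]$, together with checking that the extremal pair $(h_1,g_1)$ is admissible (dilatation of constant modulus $k$, and analytic part subordinate to the chosen convex map).
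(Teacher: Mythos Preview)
Your argument is correct and follows the same skeleton as the paper's proof: the coefficient bound $|a_n|\le 2\lambda$ from Lemma~\ref{Qlem2}, the resulting estimates for $\sum|a_n|r^n$ and $S_r(h)/\pi$, a bound $\sum|b_n|r^n\le 2k\lambda r/(1-r)$, and then the quadratic $G_5(r)=(2k+3)r^2+2(1+k+\mu)r-1$; your sharpness example $z/(1-z)$ differs from the paper's $1/(1-z)$ only by an additive constant.

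The one genuine difference is in how you bound $\sum|b_n|r^n$. The paper invokes Lemma~\ref{lem3} together with Cauchy--Schwarz (this is the inequality~(\ref{r5}) derived in the proof of Theorem~\ref{T5}), which gives $\sum|b_n|r^n\le 2k\lambda r/(1-r)$ for \emph{all} $r\in(0,1)$. You instead use Lemma~\ref{Qlem7}, which yields the same bound but only for $r\le 1/3$, and then verify $G_5(1/3)=(8k+6\mu)/9\ge 0$ so that $r_0\le 1/3$. Both routes reach the same $G_5$, but the paper's choice avoids the extra check, while yours makes the range restriction explicit and confirms (a fact the paper does not state) that the claimed radius never exceeds $1/3$.
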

\begin{proof} 
Given that $h\prec\phi$ and that the function $\phi(z)$ is univalent and convex in $\Bbb{D}$, the \textrm{Lemma \ref{Qlem2}} gives us
$|a_n|\leq  |\phi'(0)|\leq  2 \;d\left(\phi(0), \pa \phi(\mathbb{D})\right)$ for $n\geq 1$, where $d\left(\phi(0), \pa \phi(\mathbb{D})\right)$ is the Euclidean distance between 
$\phi(0)$ and the boundary $\pa\phi(\Bbb{D})$ of $\phi(\Bbb{D})$. Therefore, 
\bea\label{r6}\sum_{n=1}^\infty |a_n| r^n\leq 2 \;d\left(\phi(0), \pa \phi(\mathbb{D})\right)\sum_{n=1}^\infty r^n=2 \;d\left(\phi(0), \pa \phi(\mathbb{D})\right)\frac{r}{1-r}.\eea
By similar arguments as in the proof of the \textrm{Theorem \ref{T5}} and in view of \textrm{Lemma \ref{lem3}}, we have the inequality (\ref{r5}), where $K = (1+k)/(1-k) \geq 1$, $k\in[0,1)$.
From the definition of $S_r(h)$, we have
\beas \frac{S_r(h)}{\pi}=\sum_{n=1}^\infty n|a_n|^2 r^{2n}\leq 4\left(d\left(\phi(0), \pa \phi(\mathbb{D})\right)\right)^2\sum_{n=1}^\infty n r^{2n}=\frac{4\left(d\left(\phi(0), \pa \phi(\mathbb{D})\right)\right)^2 r^2}{(1-r^2)^2}.\eeas
Therefore,
\beas \sum_{n=1}^\infty \left(|a_n|+|b_n|\right) r^n+\mu\left(\frac{S_r(h)}{\pi}\right)^{1/2}&\leq& 
\left( 2(k+1)\frac{r}{1-r}+2\mu\frac{r}{1-r^2} \right)d\left(\phi(0), \pa \phi(\mathbb{D})\right)\\[2mm]
&=&\frac{2(k+1)(r+r^2)+2\mu r}{1-r^2}d\left(\phi(0), \pa \phi(\mathbb{D})\right)\\[2mm]
&\leq &d\left(\phi(0), \pa \phi(\mathbb{D})\right)\quad \text{for}\quad r\leq r_0,\eeas
where $r_0$ is the positive root of the equation 
\bea\label{g6} G_5(r):=(2k+3)r^2+2(\mu+k+1)r-1=0,\eea
where $k=(K-1)/(K+1)$. It is evident that the function $G_5(r)$ is a strictly increasing function of $r\in[0,1)$ with $G_5(0)=-1<0$ and $\lim_{r\to 1^{-}}G_5(r)=4k+2\mu+4>0$. 
Therefore, the equation $G_5(r)=0$ has a unique root $r_0\in(0,1)$. \\[2mm]
\indent To prove the sharpness of the result, we consider the function $f_2(z)=h_2(z)+\ol{g_2(z)}$ in $\mathbb{D}$ such that 
\beas \phi(z)=h_2(z)=\frac{1}{1-z}=\sum_{n=0}^\infty z^n\quad\text{and}\quad g_2(z)=k\lambda \sum_{n=1}^\infty z^n,\eeas
where $|\lambda|=1$ and $k=(K-1)/(K+1)$. It is evident that $d\left(\phi(0), \pa \phi(\mathbb{D})\right)=1/2$.
Thus,
\beas \sum_{n=1}^\infty \left(|a_n|+ |k\lambda a_n|\right)r^n+\mu\left(\frac{S_r(h_2)}{\pi}\right)^{1/2}
&=&(1+k)\frac{r}{1-r}+\mu\left(\frac{r^2}{(1-r^2)^2}\right)^{1/2}\\[2mm]
&=&\frac{(1+k) \left(r^2+r\right)+\mu r}{1-r^2}> 1/2\eeas
for $r>r_0$, where $r_0$ is the positive root of the equation (\ref{g6}).										
This shows that $r_0$ is the best possible. This completes the proof.
\end{proof}
The following two results are the sharp improved versions of Bohr inequality of \textrm{Theorems D} and \textrm{E}, by the concept of replacing $|a_0|$ with $|h(z)|$ and $|a_1|$ with $|h'(z)|$, respectively, in the majorant series.
\begin{theo}\label{T3} Suppose that $f(z)=h(z)+\ol{g(z)}=\sum_{n=0}^\infty a_n z^n+\ol{\sum_{n=1}^\infty b_n z^n}$ is a sense-preserving $K$-quasiconformal harmonic mapping in $\mathbb{D}$ and $h\prec \phi$, where $\phi$ is univalent and convex in $\mathbb{D}$. Then
\beas |h(z)|+|h'(z)| r+\sum_{n=2}^\infty |a_n| r^n+\sum_{n=1}^\infty |b_n| r^n\leq |\phi(0)|+d\left(\phi(0), \pa\phi(\mathbb{D})\right)\quad\text{for}\quad r\leq r_0,\eeas
where $r_0\in(0,1)$ is the smallest positive root of the equation 
\beas (1-r)^2-2\left(r^2+\left(\frac{2K}{K+1}\right)r\right)(1-r)-2r=0.\eeas
The number $r_0$ is sharp.
\end{theo}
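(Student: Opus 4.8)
The plan is to run the subordination-estimate scheme used for Theorems \ref{T5} and \ref{T1}, now applied to the functional in which $|a_0|$ is replaced by $|h(z)|$ and $|a_1|r$ by $|h'(z)|r$. Write $\lambda=d\left(\phi(0),\pa\phi(\mathbb{D})\right)$. Since $h\prec\phi$ with $\phi$ univalent and convex, \textrm{Lemma \ref{Qlem2}} gives $|a_n|\le|\phi'(0)|\le 2\lambda$ for $n\ge1$, while $h(0)=\phi(0)$ forces $|a_0|=|\phi(0)|$. First I would estimate the three analytic pieces termwise: $|h(z)|\le|\phi(0)|+\sum_{n=1}^\infty|a_n|r^n\le|\phi(0)|+2\lambda\frac{r}{1-r}$; then $|h'(z)|r\le\sum_{n=1}^\infty n|a_n|r^n\le 2\lambda\frac{r}{(1-r)^2}$; and $\sum_{n=2}^\infty|a_n|r^n\le 2\lambda\frac{r^2}{1-r}$.

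For the co-analytic part, sense-preservation and $K$-quasiconformality give $|g'(z)|\le k|h'(z)|$ with $k=(K-1)/(K+1)$, so \textrm{Lemma \ref{lem3}} together with the Cauchy--Schwarz inequality (exactly as in the derivation of (\ref{r5})) yields $\sum_{n=1}^\infty|b_n|r^n\le k\sum_{n=1}^\infty|a_n|r^n\le 2k\lambda\frac{r}{1-r}$. Summing the four estimates bounds the left-hand side of the asserted inequality by $|\phi(0)|+2\lambda\left(\frac{r}{1-r}+\frac{r}{(1-r)^2}+\frac{r^2}{1-r}+\frac{kr}{1-r}\right)$, so it suffices to check $2\left(r(1-r)+r+r^2(1-r)+kr(1-r)\right)\le(1-r)^2$. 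Clearing denominators and using the identity $1+k=2K/(K+1)$, this is precisely $P(r):=(1-r)^2-2\left(r^2+\frac{2K}{K+1}r\right)(1-r)-2r\ge0$, {\it i.e.,} $P(r)=2r^3+(1+2k)r^2-(6+2k)r+1\ge0$.

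It then remains to analyse the cubic $P$. Since $P(0)=1>0$, $P'(0)=-(6+2k)<0$, $P''>0$ on $[0,1]$ and $P(1)=-2<0$, the polynomial $P$ first decreases and then increases on $(0,1)$, so it has a unique zero $r_0\in(0,1)$ and $P\ge0$ on $[0,r_0]$; moreover $P(1/3)=-(22+12k)/27<0$ shows $r_0<1/3$, which legitimises the \textrm{Lemma \ref{lem3}}/\textrm{Lemma \ref{Qlem7}}-type step that requires $r\le1/3$. For sharpness I would take $\phi(z)=h(z)=1/(1-z)$ and $g(z)=k\lambda\sum_{n=1}^\infty z^n$ with $|\lambda|=1$, so that $|\phi(0)|=1$, $\lambda=1/2$ and $|g'(z)|=k|h'(z)|$; evaluating the functional at $z=r$ gives $\frac{1}{1-r}+\frac{r}{(1-r)^2}+\frac{r^2}{1-r}+\frac{kr}{1-r}$, which equals $|\phi(0)|+\lambda=3/2$ exactly when $P(r)=0$ and exceeds $3/2$ for $r>r_0$. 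The genuinely delicate points are that the chain of elementary bounds must collapse to $P(r)$ with no slack lost (this is what forces $r_0$ to be best possible) and the sign/monotonicity bookkeeping for the cubic, in particular the inequality $r_0<1/3$.
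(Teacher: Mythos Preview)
Your proof is correct and follows essentially the same route as the paper: the same termwise estimates via \textrm{Lemma \ref{Qlem2}} for $|h(z)|$, $|h'(z)|r$, $\sum_{n\ge 2}|a_n|r^n$, the same Cauchy--Schwarz bound (\ref{r5}) for $\sum|b_n|r^n$, and the same reduction to the cubic $(1-r)^2-2(r^2+(k+1)r)(1-r)-2r$. The only cosmetic differences are that the paper takes $\phi(z)=z/(1-z)$ rather than $1/(1-z)$ for sharpness (these differ by an additive constant, so both work), and that you add the extra observation $r_0<1/3$, which the paper does not state; note also that the intermediate inequality $\sum|b_n|r^n\le k\sum|a_n|r^n$ you wrote is really \textrm{Lemma \ref{Qlem7}} rather than (\ref{r5}), though the final bound $2k\lambda r/(1-r)$ you actually use is exactly what (\ref{r5}) delivers.
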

\begin{proof} 
Given that $h\prec\phi$ and that the function $\phi(z)$ is univalent and convex in $\Bbb{D}$, the \textrm{Lemma \ref{Qlem2}} gives us
$|a_n|\leq  |\phi'(0)|\leq  2 \;d\left(\phi(0), \pa \phi(\mathbb{D})\right)$ for $n\geq 1$. Then, we have the inequality (\ref{r6}) and
\beas\sum_{n=2}^\infty |a_n| r^n\leq 2 \;d\left(\phi(0), \pa \phi(\mathbb{D})\right)\sum_{n=2}^\infty r^n=2 \;d\left(\phi(0), \pa \phi(\mathbb{D})\right)\frac{r^2}{1-r}.\eeas 
Using similar arguments as in the proof of the \textrm{Theorem \ref{T5}} and in view of \textrm{Lemma \ref{lem3}}, we have the inequality (\ref{r5}),
where $K = (1+k)/(1-k) \geq 1$, $k\in[0,1)$. 
As $\phi$ is univalent and $h\prec \phi$, it follows that $h(0)=\phi(0)$ and 
\beas |h(z)|\leq |h(0)|+\left|\sum_{n=1}^\infty a_n z^n\right|\leq |\phi(0)|+2 d\left(\phi(0), \pa \phi(\mathbb{D})\right)\frac{r}{1-r}.\eeas  
It is easy to see that
\beas |h'(z)|=\left|\sum_{n=1}^\infty n a_n z^{n-1}\right|\leq 2 d\left(\phi(0), \pa \phi(\mathbb{D})\right)\sum_{n=1}^\infty n r^{n-1}=2 d\left(\phi(0), \pa \phi(\mathbb{D})\right)\frac{1}{(1-r)^2}.\eeas
Therefore,
\beas &&|h(z)|+|h'(z)| r+\sum_{n=2}^\infty |a_n| r^n+\sum_{n=1}^\infty |b_n| r^n\\[2mm]
&&\leq|\phi(0)|+\frac{2\left(r^2+(k+1)r\right)(1-r)+2r}{(1-r)^2}d\left(\phi(0), \pa \phi(\mathbb{D})\right)\\[2mm]
&&\leq |\phi(0)|+d\left(\phi(0), \pa \phi(\mathbb{D})\right)\quad \text{for}\quad r\leq r_0,\eeas
where $r_0\in(0,1)$ is the smallest positive root of the equation 
\bea\label{g7}(1-r)^2-2\left(r^2+(k+1)r\right)(1-r)-2r=0,\eea
where $k=(K-1)/(K+1)$.\\[2mm]
\indent To prove the sharpness of the result, we consider the function $f_3(z)=h_3(z)+\ol{g_3(z)}$ in $\mathbb{D}$ such that 
\beas \phi(z)=h_3(z)=\frac{z}{1-z}=\sum_{n=1}^\infty z^n\quad\text{and}\quad g_3(z)=k\lambda \sum_{n=1}^\infty z^n,\eeas
where $|\lambda|=1$ and $k=(K-1)/(K+1)$. It is evident that $d\left(\phi(0), \pa \phi(\mathbb{D})\right)=1/2$ and $|\phi(0)|=0$.
Thus,
\beas &&|h_3(r)|+|h_3'(r)| r+\sum_{n=2}^\infty |a_n| r^n+\sum_{n=1}^\infty |k\lambda a_n| r^n\\[2mm]
&=&\frac{r}{(1-r)}+\frac{r}{(1-r)^2}+\frac{r^2}{(1-r)}+\frac{kr}{(1-r)}> 1/2\quad\text{for}\quad r>r_0,\eeas
where $r_0$ is the positive root of the equation (\ref{g7}).
This shows that $r_0$ is the best possible. This completes the proof.
\end{proof}
\begin{theo}\label{T4}  Suppose that $f(z)=h(z)+\ol{g(z)}=\sum_{n=0}^\infty a_n z^n+\ol{\sum_{n=1}^\infty b_n z^n}$ is a sense-preserving $K$-quasiconformal harmonic mapping in $\mathbb{D}$ and $h\prec \phi$, where $\phi$ is analytic and univalent in $\mathbb{D}$. Then
\beas |h(z)|+\left|h'(z)\right|r+\sum_{n=2}^\infty \left|a_n\right|r^n+\sum_{n=1}^\infty |b_n|r^n\leq |\phi(0)|+d\left(\phi(0), \pa\phi(\mathbb{D})\right)\quad\text{for}\quad r\leq r_0,\eeas
where $r_0\in(0, 1/3)$ is the unique positive root of the equation 
\beas (1-r)^3-4\left(r^2(2-r)+\left(\frac{2K}{K+1}\right)r\right)(1-r)-4r(1+r)=0.\eeas
The number $r_0$ is sharp.
\end{theo}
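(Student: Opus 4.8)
The plan is to follow the pattern of the proofs of Theorems~\ref{T2} and~\ref{T3}, now keeping the linear terms $|h(z)|$ and $|h'(z)|r$ in the majorant sum and using the univalence (rather than the convexity) of $\phi$. Write $d:=d\left(\phi(0),\pa\phi(\mathbb{D})\right)$ and $k=(K-1)/(K+1)\in[0,1)$, and note the identity $2K/(K+1)=1+k$. Since $\phi$ is analytic and univalent with $h\prec\phi$, Lemma~\ref{Qlem1} gives $|a_n|\le n|\phi'(0)|\le 4nd$ for $n\ge1$ together with $h(0)=\phi(0)$; combined with the sums $\sum_{n\ge1}nr^n=r/(1-r)^2$ and $\sum_{n\ge1}n^2r^{n-1}=(1+r)/(1-r)^3$, this yields
\[
|h(z)|\le|\phi(0)|+\frac{4dr}{(1-r)^2},\qquad \sum_{n=2}^\infty|a_n|r^n\le\frac{4dr^2(2-r)}{(1-r)^2},\qquad |h'(z)|\,r\le\frac{4dr(1+r)}{(1-r)^3}.
\]
For the co-analytic part, $f$ being sense-preserving $K$-quasiconformal means $|g'(z)|\le k|h'(z)|$ in $\mathbb{D}$, so Lemma~\ref{Qlem7} gives $\sum_{n=1}^\infty|b_n|r^n\le k\sum_{n=1}^\infty|a_n|r^n\le 4kdr/(1-r)^2$ for $r\le 1/3$.

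Adding these four estimates and placing everything over the common denominator $(1-r)^3$ (using $r(1-r)+r(1+r)=2r$), the left-hand side of the asserted inequality is at most $|\phi(0)|+\frac{4d}{(1-r)^3}\bigl[\,2r+r^2(2-r)(1-r)+kr(1-r)\,\bigr]$, so it suffices to prove $G(r)\ge 0$ for $r\le r_0$, where
\[
G(r):=(1-r)^3-4\bigl(r^2(2-r)+(k+1)r\bigr)(1-r)-4r(1+r).
\]
A direct expansion gives $G(r)=1-(11+4k)r+(4k-5)r^2+11r^3-4r^4$, and, using $2K/(K+1)=1+k$, this is exactly the polynomial in the statement. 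I would then record that $G(0)=1>0$, $G(1/3)=-(232+72k)/81<0$, and that $G$ is strictly decreasing on $(0,1)$: on $(0,1/3)$ this is immediate since $4k-5<0$ forces $G'(r)=-(11+4k)+2(4k-5)r+33r^2-16r^3<-(11+4k)+33r^2<0$, and the interval $[1/3,1)$ is covered by a similar elementary estimate (splitting at $r=1/2$). Hence $G$ has a unique zero $r_0\in(0,1/3)$, $G\ge 0$ precisely on $[0,r_0]$, and the constraint $r\le 1/3$ from Lemma~\ref{Qlem7} holds automatically since $r_0<1/3$; this proves the inequality.

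For sharpness I would take $\phi(z)=h(z)=z/(1-z)^2=\sum_{n\ge1}nz^n$ and $g(z)=k\lambda\,z/(1-z)^2$ with $|\lambda|=1$, so that $\omega_f=g'/h'\equiv k\lambda$ has modulus $k$; then $f=h+\ol{g}$ is a sense-preserving $K$-quasiconformal harmonic mapping, $h\prec\phi$ holds trivially, $|\phi(0)|=0$, and $d\left(\phi(0),\pa\phi(\mathbb{D})\right)=1/4$. Along $z=r$ all of the estimates above hold with equality, and a short computation gives $(1-r)^3-G(r)=4\bigl[r^2(2-r)(1-r)+kr(1-r)+2r\bigr]$, so for this $f$ the left-hand side equals $\frac{1}{4}-\frac{G(r)}{4(1-r)^3}$, which is strictly larger than $\frac{1}{4}=|\phi(0)|+d\left(\phi(0),\pa\phi(\mathbb{D})\right)$ for every $r>r_0$, since $G(r)<0$ there. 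This shows $r_0$ cannot be enlarged, so it is best possible. The only steps needing any care are the algebraic bookkeeping that collapses the four bounds into $G$ and the verification of the monotonicity of $G$ on all of $(0,1)$; both are routine, so I do not anticipate a genuine obstacle.
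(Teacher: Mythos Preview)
Your proposal is correct and follows essentially the same route as the paper: the same coefficient bound $|a_n|\le 4nd$ from Lemma~\ref{Qlem1}, the same use of Lemma~\ref{Qlem7} for $\sum|b_n|r^n$, the same four term-by-term estimates, the same polynomial $G(r)=G_6(r)$, and the same extremal $\phi(z)=h(z)=z/(1-z)^2$, $g(z)=k\lambda h(z)$. The only cosmetic difference is in verifying that $G$ is strictly decreasing: the paper rewrites $G'$ and appeals to Sturm's theorem (with a figure), whereas you give a direct elementary bound on $(0,1/3)$ and note the remaining interval is handled similarly; since the theorem only needs the unique root to lie in $(0,1/3)$ and sharpness only requires $G(r)<0$ for $r$ just above $r_0$, your argument on $(0,1/3)$ already suffices. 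Incidentally, your value $G(1/3)=-(232+72k)/81$ is the correct one.
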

\begin{proof} Given that $h\prec \phi$ and the function $\phi$ is analytic and univalent in $\Bbb{D}$, in light of \textrm{Lemma \ref{Qlem2}}, we have 
$|a_n|\leq  n|\phi'(0)|\leq  4n \;d\left(\phi(0), \pa \phi(\mathbb{D})\right)$ for $n\geq 1$. Therefore,
\beas\sum_{n=2}^\infty |a_n| r^n\leq 4 \;d\left(\phi(0), \pa \phi(\mathbb{D})\right)\sum_{n=2}^\infty nr^n=4 \;d\left(\phi(0), \pa \phi(\mathbb{D})\right)\frac{r^2(2-r)}{(1-r)^2}.\eeas 
Using similar argument as in the proof of the \textrm{Theorem \ref{T2}} and in view of \textrm{Lemma \ref{Qlem7}}, we have the inequality (\ref{r3}),
where $K = (1+k)/(1-k) \geq 1$, $k\in[0,1)$. 
As $\phi$ is univalent and $h\prec \phi$, it follows that $h(0)=\phi(0)$ and 
\beas |h(z)|\leq |h(0)|+\sum_{n=1}^\infty |a_n| r^n\leq |\phi(0)|+4 d\left(\phi(0), \pa \phi(\mathbb{D})\right)\frac{r}{(1-r)^2}.\eeas  
It is evident that
\beas |h'(z)|=\left|\sum_{n=1}^\infty n a_n z^{n-1}\right|\leq 4 d\left(\phi(0), \pa \phi(\mathbb{D})\right)\sum_{n=1}^\infty n^2 r^{n-1}=4 d\left(\phi(0), \pa \phi(\mathbb{D})\right)\frac{1+r}{(1-r)^3}.\eeas
Therefore,
\beas &&|h(z)|+|h'(z)| r+\sum_{n=2}^\infty |a_n| r^n+\sum_{n=1}^\infty |b_n| r^n\\[2mm]
&\leq&|\phi(0)|+\frac{4\left(r^2(2-r)+(1+k)r\right)(1-r)+4r(1+r)}{(1-r)^3}d\left(\phi(0), \pa \phi(\mathbb{D})\right)\\[2mm]
&\leq& |\phi(0)|+d\left(\phi(0), \pa \phi(\mathbb{D})\right)\quad \text{for}\quad r\leq r_0,\eeas
where $r_0$ is the positive root of the equation 
\bea\label{g5}G_6(r):=(1-r)^3-4\left(r^2(2-r)+(1+k)r\right)(1-r)-4r(1+r)=0,\eea
where $k=(K-1)/(K+1)$. Therefore,
\beas G_6'(r)&=&-11-4k-10 r+8kr+33r^2-16 r^3\\
&=&-4k(1-r)-4r(1-r)-\left(11+6r+16r^3-33r^2\right).\eeas
By employing Sturm's theorem, it's a straightforward process to ascertain that the equation $11+6r+16r^3-33r^2=0$ has no root in the interval $(0,1)$. Consequently, we conclude that $11+6r+16r^3-33r^2>0$ in this interval, as illustrated in Figure \ref{fig2}.
\begin{figure}[H]
\centering
\includegraphics[scale=0.7]{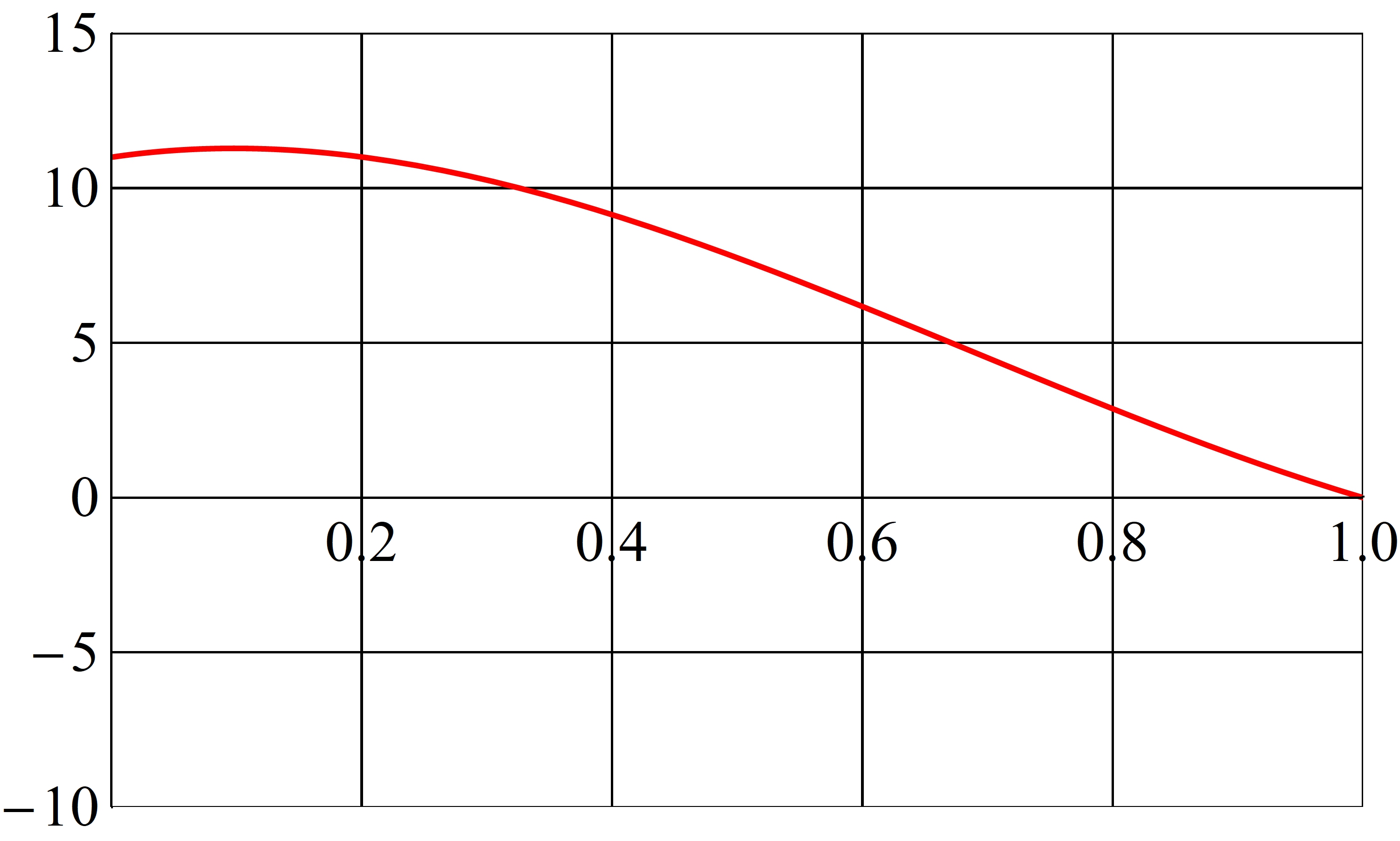}
\caption{The graph of the polynomial $11+6r+16r^3-33r^2$ in $(0,1)$}
\label{fig2}
\end{figure}
Therefore, $G'_6(r)<0$ for $r\in(0,1)$, which shows that $G_6(r)$ is a strictly decreasing function of $r\in(0,1)$ with $G_6(0)=1$ and $\lim_{r\to (1/3)^{-}}G_6(r)=-4(29+9k)/81<0$. Thus, the equation 
$G_6(r)=0$ has a unique positive root $r_0\in(0,1/3)$.\\[2mm]
\indent To prove the sharpness of the result, we consider the function $f_4(z)=h_4(z)+\ol{g_4(z)}$ in $\mathbb{D}$ such that 
\beas \phi(z)=h_4(z)=\frac{z}{(1-z)^2}=\sum_{n=1}^\infty nz^n\quad\text{and}\quad g_4(z)=k\lambda \sum_{n=1}^\infty n z^n,\eeas
where $|\lambda|=1$ and $k=(K-1)/(K+1)$. It is evident that $d\left(\phi(0), \pa \phi(\mathbb{D})\right)=1/4$ and $|\phi(0)|=0$.
Thus,
\beas &&|h_4(r)|+|h_4'(r)| r+\sum_{n=2}^\infty |a_n| r^n+\sum_{n=1}^\infty |k\lambda a_n| r^n\\[2mm]
&=&\frac{r}{(1-r)^2}+\frac{r(1+r)}{(1-r)^3}+\frac{r^2(2-r)}{(1-r)^2}+\frac{kr}{(1-r)^2}> 1/4\quad\text{for}\quad r>r_0\eeas
where $r_0$ is the positive root of the equation (\ref{g5}).
This completes the proof.
\end{proof}
\section{\bf Bohr phenomenon related to the family of concave univalent functions}
In the following, we obtain the sharp Bohr radius for harmonic mappings in which the analytic part is subordinated to a function belonging to the family $\widehat{C_0}(\alpha)$, $\alpha\in[1,2]$.  
\begin{theo}\label{T7} Suppose that $f(z)=h(z)+\ol{g(z)}=\sum_{n=0}^\infty a_n z^n+\ol{\sum_{n=1}^\infty b_n z^n}$ is a sense-preserving $K$-quasiconformal harmonic mapping in $\mathbb{D}$ and $h\prec \phi$, where $\phi\in\widehat{C_0}(\alpha)$ and $\alpha\in[1,2]$. Then
\beas\sum_{n=1}^\infty \left(|a_n|+ |b_n|\right) r^n\leq d\left(\phi(0), \pa\phi(\mathbb{D})\right)\quad\text{for}\quad r\leq r_0,\eeas
where $r_0\in(0,1/3)$ is the unique root of the equation 
\beas \left(\frac{2K}{K+1}\right)\left(\left(\frac{1+r}{1-r}\right)^\alpha-1\right)-1=0.\eeas
The number $r_0$ is sharp.
\end{theo}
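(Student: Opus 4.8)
The plan is to run the standard subordination machinery: first control the analytic part $h$ through the extremal concave map $f_\alpha$, then transfer that control to the co-analytic part $g$ using the quasiconformality bound, and finally reduce everything to the scalar inequality defining $r_0$. Concretely, write $\phi(z)=\sum_{n=0}^\infty c_n z^n$. Since $\phi\in\widehat{C_0}(\alpha)$, Lemma \ref{Qlem6} gives $|c_n|\le A_n|\phi'(0)|$ for $n\ge 1$ and $|\phi'(0)|\le 2\alpha\,d(\phi(0),\partial\phi(\mathbb{D}))$, where the $A_n\ge 0$ are the Taylor coefficients of $f_\alpha$ from Lemma \ref{Qlem6}. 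Because $h\prec\phi$, Lemma \ref{Qlem5} with $N=1$ gives $\sum_{n=1}^\infty |a_n|r^n\le\sum_{n=1}^\infty |c_n|r^n$ for $r\le 1/3$, and hence $\sum_{n=1}^\infty |a_n|r^n\le|\phi'(0)|\sum_{n=1}^\infty A_n r^n=|\phi'(0)|f_\alpha(r)$ for $r\le 1/3$.

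Next I would handle the co-analytic part. Since $f$ is a sense-preserving $K$-quasiconformal harmonic map, its dilatation satisfies $|g'(z)|\le k|h'(z)|$ in $\mathbb{D}$ with $k=(K-1)/(K+1)\in[0,1)$, so Lemma \ref{Qlem7} gives $\sum_{n=1}^\infty |b_n|r^n\le k\sum_{n=1}^\infty |a_n|r^n$ for $r\le 1/3$ (the case $K=1$ being trivial). Adding the two estimates, using $1+k=2K/(K+1)$, the bound $|\phi'(0)|\le 2\alpha\,d(\phi(0),\partial\phi(\mathbb{D}))$, and $f_\alpha(r)=\tfrac{1}{2\alpha}\left(\left(\tfrac{1+r}{1-r}\right)^\alpha-1\right)$, I obtain $\sum_{n=1}^\infty(|a_n|+|b_n|)r^n\le\frac{2K}{K+1}\,d(\phi(0),\partial\phi(\mathbb{D}))\left(\left(\tfrac{1+r}{1-r}\right)^\alpha-1\right)$ for $r\le 1/3$. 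Thus the asserted inequality holds whenever $G(r):=\frac{2K}{K+1}\left(\left(\tfrac{1+r}{1-r}\right)^\alpha-1\right)-1\le 0$. Since $G$ is continuous and strictly increasing on $(0,1)$ with $G(0)=-1<0$ and $G(1/3)=\frac{2K}{K+1}(2^\alpha-1)-1\ge 0$ (because $K\ge 1$ and $\alpha\ge 1$), it has a unique root $r_0\in(0,1/3]$, and the inequality holds for all $r\le r_0$.

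For sharpness I would test the estimate on $\phi=h=f_\alpha$ and $g(z)=k\lambda f_\alpha(z)$ with $|\lambda|=1$: then $\omega_f\equiv k\lambda$, so $f=h+\ol g$ is a sense-preserving $K$-quasiconformal harmonic map with $h\prec\phi$ trivially, and since $\phi'(0)=A_1=1$ and Lemma \ref{Qlem6}(i) is an equality for $f_\alpha$, we get $d(\phi(0),\partial\phi(\mathbb{D}))=1/(2\alpha)$; a direct computation gives $\sum_{n=1}^\infty(|a_n|+|b_n|)r^n=(1+k)f_\alpha(r)=\frac{1}{2\alpha}\cdot\frac{2K}{K+1}\left(\left(\tfrac{1+r}{1-r}\right)^\alpha-1\right)$, which exceeds $1/(2\alpha)=d(\phi(0),\partial\phi(\mathbb{D}))$ exactly for $r>r_0$, so $r_0$ is best possible. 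The summation $\sum A_n r^n=f_\alpha(r)$ and the monotonicity of $G$ are routine; the main obstacle --- really just a bookkeeping point --- is the domain restriction, since Lemmas \ref{Qlem5} and \ref{Qlem7} only hold for $r\le 1/3$, so one must verify $r_0\le 1/3$ (which is where $K\ge 1$ and $\alpha\ge 1$ are used) before closing the chain, and this is also why the radius cannot be pushed beyond $1/3$ by this method.
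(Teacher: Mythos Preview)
Your proposal is correct and follows essentially the same route as the paper: bound $\sum|a_n|r^n$ via Lemmas~\ref{Qlem5} and~\ref{Qlem6}, transfer to $\sum|b_n|r^n$ via Lemma~\ref{Qlem7}, reduce to the monotone scalar function $G$, and test sharpness on $h=\phi=f_\alpha$, $g=k\lambda f_\alpha$. Your treatment of the endpoint is in fact slightly more careful than the paper's, since you correctly note $G(1/3)\ge 0$ (with equality when $K=1$, $\alpha=1$), giving $r_0\in(0,1/3]$ rather than the paper's stated open interval.
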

\begin{proof}
As $h\prec \phi$ and  $\phi\in\widehat{C_0}(\alpha)$, $\alpha\in[1,2]$ with $\phi(z)=\sum_{n=0}^\infty c_n z^n$, thus, in view of \textrm{Lemmas \ref{Qlem5}} and \ref{Qlem6}, we have 
\bea\label{j2} \sum_{n=1}^\infty |a_n| r^n\leq \sum_{n=1}^\infty |c_n| r^n\leq 2\alpha\;d\left(\phi(0), \pa\phi(\mathbb{D})\right)\sum_{n=1}^\infty A_n r^n\quad\text{for}\quad |z|=r\leq 1/3.\eea
Using similar arguments as in the proof of \textrm{Theorem \ref{T5}}, and in view of \textrm{Lemma \ref{Qlem7}}, we have 
\bea\label{j3} \sum_{n=1}^\infty |b_n| r^n\leq k \sum_{n=1}^\infty |a_n| r^n\leq  2k\alpha\;d\left(\phi(0), \pa\phi(\mathbb{D})\right)\sum_{n=1}^\infty A_n r^n\quad\text{for}\quad |z|=r\leq 1/3.\eea
Therefore,
\beas S_3(r):=\sum_{n=1}^\infty \left(|a_n|+|b_n|\right) r^n&\leq&
 2\alpha(k+1)\;d\left(\phi(0), \pa\phi(\mathbb{D})\right)\sum_{n=1}^\infty A_n r^n\\
 &=&(k+1)\left(\left(\frac{1+r}{1-r}\right)^\alpha-1\right)d\left(\phi(0), \pa\phi(\mathbb{D})\right).\eeas
Let 
\beas G_7(r)=(k+1)\left(\left(\frac{1+r}{1-r}\right)^\alpha-1\right)-1,\quad\text{where}\quad k=(K-1)/(K+1).\eeas 
Then, \beas G_7'(r)=\frac{2\alpha(k+1)(1+r)^{\alpha}}{(1-r^2) (1-r)^\alpha}> 0,\eeas
which shows that $G_7(r)$ is a strictly increasing function of $r\in[0,1)$. Moreover, $G_7(0)=-1<0$ and $\lim_{r\to 1/3}G_7(r)=(2^r-2) + (2^r-1) k>0$. Thus, the equation 
$G_7(r)=0$ has a unique positive root $r_0\in(0,1/3)$. 
Hence, $S_3(r)\leq d\left(\phi(0), \pa \phi(\mathbb{D})\right)$ for $r\leq r_0$, where $r_0$ is the positive root of the equation $G_7(r)=0$.
\\[2mm]
\indent To prove the sharpness of the result, we consider the function $f_5(z)=h_5(z)+\ol{g_5(z)}$ in $\mathbb{D}$ such that 
\beas \phi(z)=h_5(z)=\frac{1}{2\alpha}\left(\left(\frac{1+z}{1-z}\right)^\alpha-1\right)=\sum_{n=1}^\infty A_n z^n \quad\text{and}\quad g_5(z)=k\lambda \sum_{n=1}^\infty A_n z^n,\eeas
where $|\lambda|=1$ and $k=(K-1)/(K+1)$. It is evident that $d\left(\phi(0), \pa \phi(\mathbb{D})\right)=1/(2\alpha)$.
Thus,
\beas \sum_{n=1}^\infty \left(|A_n|+ |k\lambda A_n|\right)r^n=(1+k)\sum_{n=1}^\infty A_n r^n
&=&\frac{(1+k)}{2\alpha}\left(\left(\frac{1+r}{1-r}\right)^\alpha-1\right)\\[2mm]&>& \frac{1}{2\alpha}=d\left(\phi(0), \pa \phi(\mathbb{D})\right)\quad\text{for}\quad r>r_0,\eeas
where $r_0$ is the positive root of the equation $G_7(r)=0$.
This shows that $r_0$ is the best possible. This completes the proof.
\end{proof}
In the following, we obtain the Bohr-Rogosinski inequality for harmonic mappings in which the analytic part is subordinated to a function belonging to the family $\widehat{C_0}(\alpha)$, $\alpha\in[1,2]$. 
\begin{theo}\label{T8} Suppose that $f(z)=h(z)+\ol{g(z)}=\sum_{n=0}^\infty a_n z^n+\ol{\sum_{n=1}^\infty b_n z^n}$ is a sense-preserving $K$-quasiconformal harmonic mapping in $\mathbb{D}$ and $h\prec \phi$, where $\phi\in\widehat{C_0}(\alpha)$ and $\alpha\in[1,2]$. Then
\beas|h(z)|+\sum_{n=1}^\infty \left(|a_n|+ |b_n|\right) r^n\leq |\phi(0)|+d\left(\phi(0), \pa\phi(\mathbb{D})\right)\quad\text{for}\quad r\leq r_0,\eeas
where $r_0\in(0,1/3)$ is the unique root of the equation 
\beas \left(\frac{3K+1}{K+1}\right)\left(\left(\frac{1+r}{1-r}\right)^\alpha-1\right)-1=0.\eeas
The number $r_0$ is sharp.
\end{theo}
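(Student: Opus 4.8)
The plan is to run the argument of Theorem \ref{T7} with one additional term, namely the majorant estimate for $|h(z)|$, which merely promotes the coefficient $k+1$ occurring there to $k+2$. First I would combine the subordination $h\prec\phi$ with Lemma \ref{Qlem5} to dominate the Taylor coefficients of $h$ by those of $\phi$, and then Lemma \ref{Qlem6}(i)--(ii) to bound the coefficients of $\phi\in\widehat{C_0}(\alpha)$. Writing $\lambda:=d\!\left(\phi(0),\pa\phi(\mathbb{D})\right)$ and recalling $\sum_{n\ge1}A_nr^n=f_\alpha(r)=\tfrac{1}{2\alpha}\big(\big(\tfrac{1+r}{1-r}\big)^\alpha-1\big)$, this yields
\[
\sum_{n=1}^\infty |a_n| r^n \;\le\; 2\alpha\lambda\sum_{n=1}^\infty A_n r^n \;=\; \lambda\left(\left(\tfrac{1+r}{1-r}\right)^\alpha-1\right)\qquad\text{for } r\le 1/3 .
\]
Since $f$ is sense-preserving and $K$-quasiconformal, its dilatation satisfies $|g'(z)|\le k|h'(z)|$ with $k=(K-1)/(K+1)$, so Lemma \ref{Qlem7} gives $\sum_{n\ge1}|b_n|r^n\le k\sum_{n\ge1}|a_n|r^n$ for $r\le 1/3$.

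Next, as $\phi$ is univalent and $h\prec\phi$ we have $h(0)=\phi(0)$, hence $|h(z)|\le|\phi(0)|+\sum_{n\ge1}|a_n|r^n$ (equivalently one may invoke Lemma \ref{Qlem6}(iii) applied to $\phi$ at the Schwarz point, giving $|h(z)-\phi(0)|\le 2\alpha\lambda f_\alpha(r)$). Adding the three contributions and using the identity $k+2=(3K+1)/(K+1)$, the left-hand side of the asserted inequality is at most
\[
|\phi(0)| + (k+2)\lambda\left(\left(\tfrac{1+r}{1-r}\right)^\alpha-1\right) \;=\; |\phi(0)| + \left(\tfrac{3K+1}{K+1}\right)\lambda\left(\left(\tfrac{1+r}{1-r}\right)^\alpha-1\right),
\]
so it suffices to check $\big(\tfrac{3K+1}{K+1}\big)\big(\big(\tfrac{1+r}{1-r}\big)^\alpha-1\big)\le 1$. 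Let $G(r)$ denote the left side minus $1$; a short computation shows $G'(r)$ is a positive multiple of $(1+r)^{\alpha-1}(1-r)^{-\alpha-1}$, so $G$ is strictly increasing on $[0,1)$, with $G(0)=-1<0$ and $G(r)\to\tfrac{3K+1}{K+1}(2^\alpha-1)-1\ge 1>0$ as $r\to(1/3)^{-}$, using $2^\alpha-1\ge 1$ for $\alpha\in[1,2]$ and $(3K+1)/(K+1)\ge 2$. Hence $G$ has a unique zero $r_0\in(0,1/3)$, which is the root of the stated equation, and the inequality holds for all $r\le r_0$.

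For the sharpness I would take $\phi(z)=h(z)=f_\alpha(z)=\sum_{n\ge1}A_nz^n$ (the extremal function of $\widehat{C_0}(\alpha)$) and $g(z)=k\lambda\sum_{n\ge1}A_nz^n$ with $|\lambda|=1$; then $\omega_f\equiv k\lambda$ and $h'=f_\alpha'\ne 0$ on $\mathbb{D}$, so $f$ is an admissible $K$-quasiconformal sense-preserving harmonic mapping with $\phi(0)=0$ and $d\!\left(\phi(0),\pa\phi(\mathbb{D})\right)=1/(2\alpha)$. Evaluating at $z=r\in(0,1)$, where all $A_n\ge 0$, gives $|h(r)|+\sum_{n\ge1}\big(|a_n|+|b_n|\big)r^n=(k+2)f_\alpha(r)$, which exceeds $1/(2\alpha)=|\phi(0)|+d\!\left(\phi(0),\pa\phi(\mathbb{D})\right)$ precisely when $G(r)>0$, i.e.\ for $r>r_0$. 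The only delicate point in the whole argument is that Lemmas \ref{Qlem5} and \ref{Qlem7} are available only for $r\le 1/3$, so one must confirm that $r_0$ genuinely lies in $(0,1/3)$; this is exactly the estimate $\lim_{r\to(1/3)^{-}}G(r)>0$ carried out above, and it shows that the cutoff $r\le 1/3$ costs nothing here. Apart from this I anticipate no real obstacle: the proof is a routine assembly of the cited lemmas with the elementary monotonicity analysis of $G$.
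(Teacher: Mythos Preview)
Your proposal is correct and follows essentially the same route as the paper: you invoke Lemmas \ref{Qlem5}, \ref{Qlem6} and \ref{Qlem7} exactly as in the proof of Theorem \ref{T7}, add the triangle-inequality estimate $|h(z)|\le|\phi(0)|+\sum_{n\ge1}|a_n|r^n$, and obtain the factor $k+2=(3K+1)/(K+1)$ in place of $k+1$; the monotonicity analysis of $G$ and the extremal pair $(h,g)=(f_\alpha,\,k\lambda f_\alpha)$ for sharpness are identical to the paper's. Your verification that $G(1/3^-)>0$ is in fact cleaner than the paper's (which contains a typographical slip, writing $2^{1+r}$ and $2^r$ for $2^{1+\alpha}$ and $2^\alpha$).
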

\begin{proof} Using similar arguments as in the proof of \textrm{Theorem \ref{T7}} and in view of \textrm{Lemmas \ref{Qlem5}}, \ref{Qlem6} and \ref{Qlem7}, we have the inequalities (\ref{j2}) and (\ref{j3}). Since $h\prec \phi$, where $\phi\in\widehat{C_0}(\alpha)$ and $\alpha\in[1,2]$, thus 
\beas |h(z)|\leq |h(0)|+|h(z)-h(0)|&=&|\phi(0)|+\left|\sum_{n=1}^\infty a_n z^n\right|\\
&\leq &|\phi(0)|+2\alpha\;d\left(\phi(0), \pa\phi(\mathbb{D})\right)\sum_{n=1}^\infty A_n r^n\eeas
 for $|z|=r\leq 1/3$. Therefore,
\beas S_4(r):&=&|h(z)|+\sum_{n=1}^\infty \left(|a_n|+ |b_n|\right) r^n\\[2mm]
&\leq&|\phi(0)|+2\alpha(k+2)\;d\left(\phi(0), \pa\phi(\mathbb{D})\right)\sum_{n=1}^\infty A_n r^n\\[2mm]
 &=&|\phi(0)|+(k+2)\left(\left(\frac{1+r}{1-r}\right)^\alpha-1\right)d\left(\phi(0), \pa\phi(\mathbb{D})\right).\eeas
Let 
\beas G_8(r)=(k+2)\left(\left(\frac{1+r}{1-r}\right)^\alpha-1\right)-1,\quad\text{where}\quad k=(K-1)/(K+1).\eeas 
Then, \beas G_8'(r)=\frac{2\alpha(k+2)(1+r)^{\alpha}}{(1-r^2) (1-r)^\alpha}> 0,\eeas
which shows that $G_8(r)$ is a strictly increasing function of $r\in[0,1)$. Moreover, $G_8(0)=-1<0$ and $\lim_{r\to 1/3}G_8(r)=(2^{1+r}-3) + (2^r-1) k>0$. Thus, the equation $G_8(r)=0$ has a unique positive root $r_0\in(0,1/3)$. 
Hence, $S_4(r)\leq d\left(\phi(0), \pa \phi(\mathbb{D})\right)$ for $r\leq r_0$, where $r_0$ is the positive root of the equation $G_8(r)=0$.
\\[2mm]
\indent To prove the sharpness of the result, we consider the function $f_6(z)=h_6(z)+\ol{g_6(z)}$ in $\mathbb{D}$ such that 
\beas \phi(z)=h_6(z)=\frac{1}{2\alpha}\left(\left(\frac{1+z}{1-z}\right)^\alpha-1\right)=\sum_{n=1}^\infty A_n z^n \quad\text{and}\quad g_6(z)=k\lambda \sum_{n=1}^\infty A_n z^n,\eeas
where $|\lambda|=1$ and $k=(K-1)/(K+1)$. It is evident that $d\left(\phi(0), \pa \phi(\mathbb{D})\right)=1/(2\alpha)$.
Thus, for $z=r$,
\beas |h_6(r)|+\sum_{n=1}^\infty \left(|A_n|+ |k\lambda A_n|\right)r^n&=&(2+k)\sum_{n=1}^\infty A_n r^n=\frac{(2+k)}{2\alpha}\left(\left(\frac{1+r}{1-r}\right)^\alpha-1\right)\\[2mm]
&>& \frac{1}{2\alpha}=|\phi(0)|+d\left(\phi(0), \pa \phi(\mathbb{D})\right)\quad\text{for}\quad r>r_0,\eeas
where $r_0$ is the positive root of the equation $G_8(r)=0$. 
This shows that $r_0$ is the best possible. This completes the proof.
\end{proof}
\section{Declarations}
\noindent{\bf Acknowledgment:} The work of the second author is supported by University Grants Commission (IN) fellowship (No. F. 44 - 1/2018 (SA - III)).\\[1mm]
{\bf Conflict of Interest:} The authors declare that there are no conflicts of interest regarding the publication of this paper.\\[1mm]
{\bf Availability of data and materials:} Not applicable.\\[1mm]
{\bf Author's Contributions:} All authors contribute equally to the completion of the manuscript.

\end{document}